\documentclass[12pt,oneside]{amsart}
\usepackage{amsfonts}
\usepackage{amsmath}
\usepackage{amssymb}
\usepackage{mathrsfs}
\usepackage{geometry}
\usepackage{graphicx}
\usepackage{subfigure}
\usepackage{hyperref}
\usepackage{microtype}
\usepackage{enumitem}
\usepackage{geometry}
\usepackage{tikz}
\usepackage{tikz-cd}
\usepackage{color}
\numberwithin{equation}{section}
\allowdisplaybreaks[1]
\usepackage{titletoc}
\usepackage{amsthm}
\usepackage{cite}
 \usepackage{float}

\newcommand{\la}{\lambda}

\newcommand{\ve}{\varepsilon}

\newcommand{\ka}{\kappa}

\newcommand{\R}{\mathbb{R}}

\newcommand{\n}[1]{\Vert #1\Vert }
\newcommand{\bn}[1]{\big \Vert #1 \big \Vert }
\newcommand{\bbn}[1]{\Big\Vert #1 \Big \Vert }
\newcommand{\lr}[1]{\left\{ #1 \right\} }
\newcommand{\lrc}[1]{\left[ #1 \right] }
\newcommand{\lrs}[1]{\left( #1 \right) }

\newcommand{\lra}[1]{\langle #1 \rangle }

\newcommand{\abs}[1]{|#1| }

\newcommand{\wt}[1]{\widetilde{#1}}

\newcommand{\pa}{\partial}

\newcommand{\ol}{\overline}

\begin{document}

\newtheorem{theorem}{Theorem}[section]
\newtheorem{lemma}[theorem]{Lemma}

\theoremstyle{definition}
\newtheorem{definition}[theorem]{Definition}
\newtheorem{example}[theorem]{Example}
\newtheorem{remark}[theorem]{Remark}

\numberwithin{equation}{section}

\newtheorem{proposition}[theorem]{Proposition}
\newtheorem{corollary}[theorem]{Corollary}
\newtheorem{goal}[theorem]{Goal}

\title[Stability threshold for the hard-sphere Boltzmann equation]{Sharp global stability for the hard-sphere Boltzmann equation near equilibrium}

\author[X. Chen]{Xuwen Chen}
\address{Department of Mathematics, University of Rochester, Rochester, NY 14627, USA}
\email{xuwenmath@gmail.com}

\author[S. Shen]{Shunlin Shen}
\address{School of Mathematical Sciences, University of Science and Technology of China, Hefei, 230026, China}
\email{slshen@ustc.edu.cn}

\author[Z. Zhang]{Zhifei Zhang}
\address{School of Mathematical Sciences, Peking University, Beijing, 100871, China}

\email{zfzhang@math.pku.edu.cn}

\subjclass[2010]{Primary 76P05, 35Q20, 35R25; Secondary 35C05, 35B32, 82C40.}

\begin{abstract}
We study the hard-sphere Boltzmann equation near a global Maxwellian and
have found the bifurcation between the stability and instability in $H_{x}^{s}$ Sobolev space.
We prove a strong-weak type instability result in the $s<1$ regularity regime, and establish the global stability under the regularity $s>1$ via the interplay between the dissipation and dispersive effects.
 In this intricate competition of dispersion, dissipation, large-velocity growth, and the asymmetric nature of the gain and loss terms, we prove that the instability is jointly generated by the dispersive effect and the loss term structure. To reach the sharp regularity for the global stability, we incorporate the dispersive
effect into the dissipative energy framework for the analysis near equilibrium. This hence concludes sharp threshold and provides a complete characterization of global well-posedness near equilibrium.
 \end{abstract}
\keywords{Boltzmann equation, hard-sphere model, sharp global stability, instability.}
\maketitle
\tableofcontents

\section{Introduction}
We consider the hard-sphere Boltzmann equation
\begin{equation}\label{equ:Boltzmann}
\left\{
\begin{aligned}
\lrs{\pa_{t}+v\cdot\nabla_{x}} F(t,x,v)&=Q(F,F),\\
F(0,x,v)&=F^{\mathrm{in}}(x,v),
\end{aligned}
\right.
\end{equation}
where $F(t,x,v)$ denotes the distribution function of particles at time $t\geq 0$, position $x\in \mathbb{R}^{3}$, and velocity $v\in \mathbb{R}^{3}$. The collision operator $Q$ admits the decomposition
\begin{align*}
Q(F,G)&=Q^{+}(F,G)-Q^{-}(F,G),
\end{align*}
with the gain term and the loss term given respectively by
\begin{align*}
\text{gain term:}\quad Q^{+}(F,G)&=\int_{\mathbb{R}^{3}}\int_{\mathbb{S}^{2}} F(v^{*})G(u^{*}) |(u-v)\cdot \omega|\,dud\omega,\\
\text{loss term:}\quad Q^{-}(F,G)&=F(v)\int_{\mathbb{R}^{3}}\int_{\mathbb{S}^{2}} G(u) |(u-v)\cdot \omega| \,du d\omega.
\end{align*}
Here, the post-collision velocities $(v^*, u^*)$ are related to the pre-collision velocities $(v, u)$ via
\begin{align*}
u^{*}=u+\lrc{(v-u)\cdot \omega} \omega,\quad v^{*}=v-\lrc{(v-u)\cdot \omega} \omega.
\end{align*}

The hard-sphere Boltzmann equation has profound historical and theoretical significance. Originally formulated by Maxwell \cite{Max67} and Boltzmann \cite{Bol68}, it serves as a foundational model for describing non-equilibrium statistical mechanics. It occupies a key position in the mathematical theory of kinetic gases and provides a bridge between the deterministic description of classical Newtonian particle systems and the macroscopic fluid equations. The equation remains one of the few settings in which a rigorous derivation from classical particle dynamics has been established. For a comprehensive and detailed discussion, see, for example, \cite{BGSS23,DHM25,GST13,Lan75,Vil02} and the references therein.

Owing to its fundamental importance, the Cauchy problem for the hard-sphere Boltzmann equation has attracted extensive attention, and substantial progress has been made in its well-posedness theory. See, for example,
\cite{AMUXY13,Ars11,CDP19,DP89,DS18,HJKL24,KS78,IS84,Uka86}. Regarding the global existence theory, the pioneering work of DiPerna and Lions \cite{DP89} established the theory of global renormalized weak solutions with
large initial data.

A parallel and equally important line of research concerns stability near equilibrium. A Maxwellian distribution describes thermal equilibrium of a dilute gas and arises naturally from the conservation laws and entropy structure of the Boltzmann equation. It further serves as the key bridge between collisional relaxation and macroscopic fluid motion through the hydrodynamic limit, which yields fluid systems such as the Euler and Navier-Stokes equations.
The stability of Maxwellian equilibrium is consequently a fundamental problem in kinetic theory.
In this direction, substantial results have been obtained in the analysis of stability near a global Maxwellian, such as \cite{AEP87,CIP94,Duan08,DHWY17,DLX16,Guo03,Guo03vlasov,Guo04,Guo10,LYY04,LY04,LY11,NI77,Uka74}.

In contrast to the richness of these stability theories, however, the current understanding of instability near equilibrium for the Boltzmann equation remains quite limited, and the problem of characterizing the
bifurcation between stability and instability has so far remained completely
open. This naturally leads to a more basic question, namely where the threshold between
stability and instability near a Maxwellian lies and which structural features
of the Boltzmann equation determine that threshold.

An important clue comes from recent progress on the bifurcation between well-posedness and ill-posedness for the Boltzmann equation. This research direction draws on a connection with dispersive equations. Under the inverse Fourier transform in the velocity variable, the free transport operator becomes a hyperbolic Schr\"{o}dinger operator.
By first introducing dispersive techniques developed for the study of quantum many-body hierarchy dynamics, notably space-time collapse estimates (see, e.g., \cite{CHPS15,CP10,Che12,KM08,CH16,CH19,HS19,KSS11}),
T. Chen-Denlinger-Pavlovi$\acute{\text{c}}$ \cite{CDP19} managed to push the
regularity for local well-posedness down to $s>1$ in the
Maxwell molecule and hard-sphere cases.
  For further developments employing dispersive and harmonic analysis techniques, see, for instance, \cite{BSTW26,CDP21,CDP24,CSZ23sharp,CSZ26}.
On the other hand, ill-posedness phenomena for dispersive
equations have by now been widely explored. See for example \cite{BT06,CCT03,KPV01,MST01,MST02,Nak99}. However, for the Boltzmann equation, despite certain similarities with dispersive equations, the potential ill-posed mechanisms remain nontrivial because of the complexity of the nonlinear collision kernel. In this direction,
Chen-Holmer \cite{CH24well} identified the well/ill-posedness threshold for the Boltzmann equation with constant kernel, while Chen-Shen-Zhang \cite{CSZ24well} carried this investigation further to cutoff
soft potentials. The
ill-posedness mechanism for the physically important hard-sphere model was
subsequently resolved by Chen-Guo-Shen-Zhang \cite{CGSZ26}, in which a strong-to-weak type ill-posedness result in the low-regularity regime $s<1$ was established.
This work shows that the ill-posedness of the hard-sphere model does not originate from the rapid growth of the collision kernel at large velocities, but is instead jointly driven by the loss term and dispersive effects, thereby revealing a dispersion-driven, loss-term-dominated nonlinear instability mechanism for the hard-sphere Boltzmann equation.
These developments suggest that transport
geometry and the asymmetry between the gain and loss terms may also be central to understanding the stability and instability phenomenon near equilibrium.

In the present work, we investigate the
hard-sphere Boltzmann equation near a global Maxwellian, with the aim of identifying
the threshold between stability and instability in a spatial Sobolev framework
and of understanding the mechanism that determines it.

To proceed, we denote a global Maxwellian by
\begin{align*}
\mu(v)=e^{-2|v|^{2}},
\end{align*}
and introduce the standard perturbation
\begin{align*}
F=\mu+f=\mu+\mu^{\frac{1}{2}}g.
\end{align*}
The perturbation equations for $f$ and $g$ then read
\begin{equation}
\left\{
\begin{aligned}\label{equ:perturbation,Boltzmann,f}
\pa_{t}f+v\cdot\nabla_{x}f+\nu f-K_{\mu}f=&Q(f,f),\\
f(0,x,v)=&f^{\mathrm{in}}(x,v),
\end{aligned}
\right.
\end{equation}
and
\begin{equation}
\left\{
\begin{aligned}\label{equ:perturbation,Boltzmann,g}
\pa_{t}g+v\cdot\nabla_{x}g+Lg=&\Gamma(g,g),\\
g(0,x,v)=&g^{\mathrm{in}}(x,v).
\end{aligned}
\right.
\end{equation}
In the above, the linearized operators $L$, $K$, $K_{\mu}$, and bilinear operator $\Gamma(g_{1},g_{2})$ are given by
\begin{align}
Lg=&\nu g-Kg,\quad \nu(v)=2\pi \int_{\R^3}|v-u|e^{-|u|^2}du,\label{equ:notation,L,K}\\
Kg=&\mu^{-\frac{1}{2}}\lrc{Q(\mu^{\frac{1}{2}}g,\mu)+Q^{+}(\mu,\mu^{\frac{1}{2}}g)},\label{equ:notation,K}\\
K_{\mu}f=&\mu^{\frac{1}{2}}K(\mu^{-\frac{1}{2}}f)=Q(f,\mu)+Q^{+}(\mu,f),\label{equ:notation,Kmu}\\
\Gamma(g_{1}, g_{2})=&\mu^{-\frac{1}{2}}Q(\mu^{\frac{1}{2}}g_{1},\mu^{\frac{1}{2}}g_{2}).\label{equ:notation,Gamma}
\end{align}

We first state an instability theorem for the perturbation equation
\eqref{equ:perturbation,Boltzmann,f}.
\begin{theorem}[Instability]\label{thm:main theorem,instability}
The perturbation equation \eqref{equ:perturbation,Boltzmann,f} is ill-posed in $e^{\lra{v}^{2}}L_{v}^{2}H_{x}^{s_{0}}$ for $s_{0}\in [0,1)$,
 in the sense that the data-to-solution map is not uniformly continuous.
More specifically, for each $M\gg1 $, there exist a time sequence
$\left\{T_{*}^{M}\right\}_{M}$ satisfying
$$T_{*}^{M}>0, \quad \lim_{M\to \infty}T_{*}^{M}=0,$$
and two solutions $f_{1}^{M}(t)$,  $f_{2}^{M}(t)$ in $[0,T_{*}^{M}]$ with
\begin{align}\label{equ:initial,data,f}
\n{e^{\lra{v}^{2}}f_{1}^{M}(0)}_{L_{v}^{2}H_{x}^{s_{0}}}\sim \n{e^{\lra{v}^{2}}f_{2}^{M}(0)}_{L_{v}^{2}H_{x}^{s_{0}}} \sim 1.
\end{align}
These solutions are initially close at $t=0$ in the Gaussian-weighted (strong) norm
\begin{equation*}
\n{e^{\lra{v}^{2}}\lrs{f_{1}^{M}(0)-f_{2}^{M}(0)}}_{L_{v}^{2}H_{x}^{s_{0}}}\lesssim \frac{1}{\ln M},
\end{equation*}
	but become fully separated at $t=T_{*}^{M}$ in the unweighted (weak) norm
\begin{equation*}
\n{f_{1}^{M}(T_{*}^{M})-f_{2}^{M}(T_{*}^{M})}_{L_{v}^{2}H_{x}^{s_{0}}}\gtrsim 1.
\end{equation*}
In other words, we have
\begin{align}\label{equ:instability,f1,f2}
\n{f_{1}^{M}(T_{*}^{M})-f_{2}^{M}(T_{*}^{M})}_{L_{v}^{2}H_{x}^{s_{0}}}\gtrsim \ln M \n{e^{\lra{v}^{2}}\lrs{f_{1}^{M}(0)-f_{2}^{M}(0)}}_{L_{v}^{2}H_{x}^{s_{0}}}.
\end{align}
\end{theorem}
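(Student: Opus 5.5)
We would follow the strategy of Chen-Guo-Shen-Zhang \cite{CGSZ26} for the full hard-sphere equation and adapt it to the perturbative setting. The key observation is that $f=F-\mu$ satisfies the full Boltzmann equation rewritten around $\mu$. So if we build a highly concentrated solution $F^{M}$ of \eqref{equ:Boltzmann} and set $f_{j}^{M}=F_{j}^{M}-\mu$, the linear terms $\nu f-K_{\mu}f$ contribute only bounded-time, $O(T_*^M)$ errors in $L^2_vH^{s_0}_x$. The instability is therefore carried by $Q(f,f)$, and specifically by its loss part.

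\textbf{Step 1: ansatz.} Take
\[
f^{M}(0)=\phi^{M}(x,v)\in\{\text{Gaussian wave packets}\}.
\]
These are concentrated at spatial scale $M^{-1}$ and have velocity spread adapted so that the free transport $e^{-tv\cdot\nabla_x}$ disperses them over times $t\sim M^{-1}$-ish. Normalize so that $\n{e^{\lra{v}^{2}}\phi^{M}}_{L^2_vH^{s_0}_x}\sim1$. Since $s_0<1$, the $H^{s_0}_x$ norm is scale-subcritical with respect to the collision term. Consequently the loss coefficient
\[
\int \phi^{M}(u)\,|u-v|\,du
\]
evaluated along the transported packet has size $\sim M^{1-s_0}$ times a logarithmically divergent time integral. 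The logarithm comes from the three-dimensional collapse of the dispersed packet, $\int_0^{T}\|S(t)\phi\|_{L^\infty_x}\,dt\sim\ln M$.

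\textbf{Step 2: two data.} Take
\[
f_{1}^{M}(0)=\phi^{M},\qquad f_2^M(0)=(1+\tfrac{1}{\ln M})\phi^{M}.
\]
Then the initial data differ by $\lesssim 1/\ln M$ in the strong weighted norm.

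\textbf{Step 3: approximate solution.} Write the approximate solution as
\[
f_{\mathrm{app}}(t)=e^{-\int_0^t R(\tau)d\tau}\,S(t)f(0),
\]
where $S(t)$ is free transport and $R$ is the loss rate generated by $S(t)f(0)$ itself. This is a Duhamel/exponential-integrator ansatz that keeps the loss term exactly and treats the gain term, $\nu$, $K_\mu$ and the $\mu$–$f$ cross terms as perturbations.

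The difference of the phases is $\frac{1}{\ln M}\int_0^{T}R\,d\tau$. Choosing $T_*^M\to0$ so that $\int_0^{T_*^M}R\sim\ln M\cdot(\text{order }1)$, the phase difference becomes order one. This produces the full separation $\n{f_1-f_2}_{L^2_vH^{s_0}_x}\gtrsim1$ at $t=T_*^M$.

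\textbf{Step 4: error control.} Control the error $w_j=f_j^M-f_{\mathrm{app},j}$ in the \emph{unweighted} space $L^2_vH^{s_0}_x$ on $[0,T_*^M]$.

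This is the main obstacle. The gain term $Q^+(f,f)$ must be shown to be much smaller than the loss term on the concentrated packet. We would use the gain-term smoothing and a bilinear space–time estimate (a collapse-type estimate for $Q^+$ in $L^1_tL^2_vH^{s_0}_x$) giving a bound $\lesssim M^{-\delta}$.

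The large-velocity growth $|u-v|$ in the kernel is the next difficulty. Here the Gaussian weight on the data is used for two things:

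\begin{itemize}
\item to keep $f_{\mathrm{app}}$ velocity-localized;
\item to show that the loss rate is positive, which gives the damping $e^{-\int R}\le1$ and thus makes the loss term dissipative for $w$.
\end{itemize}

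With these in hand, a Gronwall argument closes. Existence of the solutions themselves on $[0,T_*^M]$ follows from the same bootstrap around $f_{\mathrm{app}}$.

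The delicate point is ensuring that the loss-term linearization $Q^-(w,f_{\mathrm{app}})$, whose coefficient $\int|f_{\mathrm{app}}||u-v|du$ is not sign-definite in $w$, is integrable in time with bound $O(1)$ rather than $O(\ln M)$. I would handle this through the collapse estimate bounding $\int_0^{T}\|\,\lra{v}S(t)\phi^M\|_{L^\infty_xL^1_v}dt$.
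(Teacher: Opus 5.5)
There is a genuine gap, and it lies in the separation mechanism of Step 3, not only in the error control. Your ``phase difference'' picture comes from NLS-type ill-posedness, where the nonlinear factor $e^{-i\Lambda}$ has modulus one and $\epsilon\Lambda\sim1$ produces decoherence. The loss term instead produces a \emph{real} damping factor. With $f_2^M(0)=(1+\epsilon)\phi^M$ and $\epsilon=1/\ln M$, you have $R_2=(1+\epsilon)R_1$, hence
\[
f_{\mathrm{app},2}-f_{\mathrm{app},1}=S(t)\phi^{M}\bigl[(1+\epsilon)e^{-(1+\epsilon)\Lambda}-e^{-\Lambda}\bigr],\qquad \Lambda=\int_0^tR_1\,d\tau .
\]
Since $|\partial_\theta[(1+\theta)e^{-(1+\theta)\Lambda}]|=|(1-y)e^{-y}|\leq1$ with $y=(1+\theta)\Lambda\geq0$, the bracket is at most $\epsilon$ in absolute value. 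Taking $x$-derivatives only adds a factor of size $\epsilon(1+\Lambda)e^{-\Lambda}|\nabla_x\Lambda|$.

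So your two approximate solutions stay $O(1/\ln M)$ apart for all times. Once $\Lambda\sim\ln M$, both have simply decayed. A dissipative mechanism can only separate solutions asymmetrically: one must be damped while the other is not. This is what the paper does. It takes $f_1^M(0)=f_{\mathrm r}(0)+f_{\mathrm b}(0)$ and $f_2^M(0)=f_{\mathrm r}(0)$, where $f_{\mathrm b}$ is small in the strong norm, $\|e^{\langle v\rangle^2}f_{\mathrm b}\|_{L^2_vH^{s_0}_x}\lesssim M^{s_0-s}$ with $s_0<s<1$. Yet $f_{\mathrm b}$ generates a loss rate $A[f_{\mathrm b}]\gtrsim M^{1-s}$ on the support of $f_{\mathrm r}$. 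Then $\partial_tf_{\mathrm r}=-f_{\mathrm r}A[f_{\mathrm b}]$ pushes $f_{\mathrm r}$ down to $1/\ln M$ by $T_*=M^{s-1}\ln\ln M$. Meanwhile the second solution stays near the stationary ansatz $f_{\mathrm r}(0)$, which is velocity-concentrated at scale $N^{-1}=M^{-10}$ and so barely interacts with itself or with $\mu$.

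Your Step 1 is also quantitatively off, which shows why a single self-interacting packet cannot reach $s_0<1$. Take $\phi^M=M^{3/2-s_0}\chi(Mx)\psi(v)$ with $O(1)$ velocity spread. Then $\|S(t)\phi^M\|_{L^\infty_xL^1_v}\sim\min\{M^{3/2-s_0},M^{-3/2-s_0}t^{-3}\}$. The $t^{-3}$ tail is not logarithmically divergent, and the time-integrated self-loss is only $\lesssim M^{1/2-s_0}$, which is small for $s_0>1/2$. Dispersion weakens self-interaction. The paper beats dispersion with $M^2$ tubes of width $M^{-1}$ and length $O(1)$, whose velocities are nearly parallel to their axes $e_{i,j}$ (transverse velocity $\leq M^{-1}$, speed $\approx1$). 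These tubes do not disperse on $[0,\frac14]$, all keep covering the origin, and the damping $e^{-\nu t}\sim1$ remains harmless.

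Finally, Step 4 cannot close as a Gronwall argument in unweighted $L^2_vH^{s_0}_x$. There $Q^{\pm}$ is not bounded and the factor $|u-v|$ is uncontrolled. The paper works in the $Z$-norm, which combines $L^{2,1}_vL^2_x$ and $L^{1,1}_vL^\infty_x$ pieces with the appropriate powers of $M$, under the time-decreasing weight $e^{(1-\kappa t)\langle v\rangle^2}$ with $\kappa=1/T_*$. This weight gains $(\kappa(t-\tau))^{-1/2}\langle v\rangle^{-1}$ in the Duhamel integral. It is also exactly why separation is measured in the unweighted norm at $t=T_*$. The bootstrap closes by iterating over $\sqrt{\ln M}\,\ln\ln M$ subintervals.
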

We remark that,
under small perturbations of the initial data, the instability mechanism in Theorem \ref{thm:main theorem,instability} remains valid.
The initial data are taken to be of size $O(1)$, mainly in order to state the instability
estimate \eqref{equ:instability,f1,f2} in a clean form that emphasizes the
separation between $f_{1}^{M}$ and $f_{2}^{M}$.
In fact, we may instead take these initial data with sufficiently small norm
$\varepsilon_{0}\ll1$,  in which case the constant in the instability estimate
\eqref{equ:instability,f1,f2} then depends on $\varepsilon_{0}$.

Next, we turn to the global stability problem. As usual,
we work with the perturbation equation \eqref{equ:perturbation,Boltzmann,g} for $g$ and introduce the macroscopic and microscopic parts, which are defined by
\begin{align*}
\mathbf{P}_{0} g=:&\left\{a+\mathbf{b} \cdot v+c\frac{|v|^2-3}{\sqrt{6}}\right\} \mu^{\frac{1}{2}},\quad  \mathbf{P}_{1} g=:g-\mathbf{P}_{0} g.
\end{align*}
We are now ready to state the global stability result.

\begin{theorem}[Global Stability]\label{theorem:global stability}
Let $s>1$.
There are constants $C_{0}$ and $\ve_{0}>0$, such that if
\begin{align*}
\n{g^{\mathrm{in}}}_{L_{v}^{2}H_{x}^{s}}\leq \ve_{0},
\end{align*}
there exists a unique global solution $g(t,x,v)$ to \eqref{equ:perturbation,Boltzmann,g} such that
\begin{align*}
\mathcal{E}(g)+\mathcal{N}(g)\leq C_{0}\n{g^{\mathrm{in}}}_{L_{v}^{2}H_{x}^{s}},
\end{align*}
where
\begin{align}
\mathcal{E}(g):=&\n{g}_{L_{t}^{\infty}L_{v}^{2}H_{x}^{s}}+\bn{\lra{v}^{\frac{1}{2}}\mathbf{P}_{1}g}_{L_{t}^{2}L_{v}^{2}H_{x}^{s}}+
\bbn{\frac{|\nabla_{x}|}{\lra{\nabla_{x}}}\mathbf{P}_{0}g}_{L_{t}^{2}L_{v}^{2}H_{x}^{s}},\label{equ:E,g,energy,theorem}\\
\mathcal{N}(g):=& \n{\nu^{-\frac{1}{2}}\Gamma(g,g)}_{L_{t}^{2}L_{v}^{2}H_{x}^{s}}.
\label{equ:N,g,energy,theorem}
\end{align}

In particular, there exists a unique global solution $f(t,x,v)$ to \eqref{equ:perturbation,Boltzmann,f}
\begin{align}\label{equ:global,f,f0}
\n{e^{\lra{v}^{2}}f(t)}_{L_{t}^{\infty}L_{v}^{2}H_{x}^{s}}\lesssim
\n{e^{\lra{v}^{2}}f^{\mathrm{in}}}_{L_{v}^{2}H_{x}^{s}}.
\end{align}
Furthermore, the solution map is global-in-time Lipschitz continuous from $e^{\lra{v}^{2}}L_{v}^{2}H_{x}^{s}$ to
$e^{\lra{v}^{2}}L_{v}^{2}H_{x}^{s}$.
\end{theorem}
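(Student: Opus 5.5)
We will prove Theorem \ref{theorem:global stability} by a continuity (bootstrap) argument in the norm $\mathcal{E}(g)+\mathcal{N}(g)$. The argument rests on two pieces: a linear dissipative--dispersive estimate for the inhomogeneous linearized equation $\pa_t g+v\cdot\nabla_x g+Lg=h$, and a bilinear estimate for $\Gamma$ that closes at the regularity $s>1$. Uniqueness and Lipschitz continuity follow by applying the same bilinear bounds to the difference of two solutions. The statement \eqref{equ:global,f,f0} for $f=\mu^{1/2}g$ is obtained by combining the global bound for $g$ with a propagation of the Gaussian weight in the $f$-equation \eqref{equ:perturbation,Boltzmann,f}. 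For that step, the loss part $\nu f$ gives damping and the weight is preserved by $K_\mu$ and by $Q$ through the energy identity $|v^*|^2+|u^*|^2=|v|^2+|u|^2$.

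\emph{Step 1 (linear energy estimate).} We take the $L_v^2H_x^s$ inner product of the linearized equation with $g$. The coercivity $\langle Lg,g\rangle\gtrsim \n{\lra{v}^{1/2}\mathbf{P}_1 g}_{L^2_v}^2$ bounds the microscopic dissipation. The macroscopic dissipation $\n{\frac{|\nabla_x|}{\lra{\nabla_x}}\mathbf{P}_0 g}$ is recovered in the standard way (Guo, Kawashima) from the fluid-type equations for $(a,\mathbf{b},c)$, using an interactive functional built from the thirteen moments. The forcing is placed in the dual norm $\nu^{-1/2}$, which is exactly why $\mathcal{N}(g)$ appears. This gives
\begin{align*}
\mathcal{E}(g)\lesssim \n{g^{\mathrm{in}}}_{L_v^2H_x^s}+\n{\nu^{-1/2}\Gamma(g,g)}_{L_t^2L_v^2H_x^s}.
\end{align*}
We note that $\mathbf{P}_0\Gamma=0$, so only $\mathbf{P}_1\Gamma$ enters the energy pairing.

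\emph{Step 2 (bilinear estimate, main obstacle).} We must bound $\mathcal{N}(g)\lesssim (\mathcal{E}(g)+\mathcal{N}(g))^2$. A naive estimate of $\n{\nu^{-1/2}\Gamma(g,g)}_{H^s_x}$ needs an $L^\infty_x$ bound on one factor, hence $s>3/2$. To reach $s>1$ we would instead use the dispersion of the transport $e^{-tv\cdot\nabla_x}$. Writing $g$ by Duhamel as $g=e^{-t(v\cdot\nabla_x+\nu)}g^{\mathrm{in}}+\int_0^t e^{-(t-\tau)(v\cdot\nabla_x+\nu)}(Kg+\Gamma)(\tau)\,d\tau$, we would apply space-time collapsing/bilinear estimates in the spirit of \cite{CDP19,CGSZ26}. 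After inverse Fourier transform in $v$, these take the form
\begin{align*}
\n{\lra{\nabla_x}^{s}\Gamma(e^{-tv\cdot\nabla_x}g_1,e^{-tv\cdot\nabla_x}g_2)}_{L^2_tL^2_{x,v}}\lesssim \n{g_1}_{L^2_vH^s_x}\n{g_2}_{L^2_vH^s_x},\qquad s>1,
\end{align*}
localized to unit time intervals. Global-in-time summation is then supplied by the $L^2_t$ dissipation, since the damping factor $e^{-\nu(t-\tau)}$ makes the unit-interval pieces square-summable. The hard-sphere growth $|u-v|$ is handled by the $\nu^{-1/2}$ weight on the gain term together with the $\lra{v}^{1/2}$ dissipation on $\mathbf{P}_1 g$. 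The $\mathbf{P}_0$ part is Gaussian in $v$ and only has low-frequency degeneracy; there we use $\frac{|\nabla_x|}{\lra{\nabla_x}}$ at high frequency and the $L^\infty_t$ energy at low frequency. I expect the hardest part to be the collapsing estimate for the loss term with the non-smooth kernel $|(u-v)\cdot\omega|$ in the $H^s_x$ norm uniformly at low $x$-frequency. I would first try a Littlewood--Paley decomposition in $x$ and treat the low frequencies with energy only.

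\emph{Step 3 (closing).} Combining Steps 1 and 2 gives $X\le C\n{g^{\mathrm{in}}}+CX^2$ with $X=\mathcal{E}(g)+\mathcal{N}(g)$. Together with the local theory of \cite{CDP19} at $s>1$, the continuity argument then gives $X\le C_0\n{g^{\mathrm{in}}}$ for $\ve_0$ small, and hence a global solution. For uniqueness and the Lipschitz bound, we apply Steps 1 and 2 to $g_1-g_2$, which solves a linearized equation with forcing $\Gamma(g_1-g_2,g_1)+\Gamma(g_2,g_1-g_2)$. The same bilinear bounds give a contraction.
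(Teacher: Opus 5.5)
\textbf{Gap: the low-frequency macroscopic interaction.} Your architecture matches the paper's: the energy estimate $\mathcal{E}(g)\lesssim\n{g^{\mathrm{in}}}_{L^2_vH^s_x}+\mathcal{N}(g)$, a Duhamel expansion with $S_\nu(t)=e^{-t\nu}e^{-tv\cdot\nabla_x}$, a collapsing-type bilinear estimate at $s>1$, and a Littlewood--Paley splitting in $x$. But your treatment of the macroscopic part at low frequency does not close. Consider $\Gamma(\mathbf{P}_0P_{\leq 1}^{x}g,\mathbf{P}_0P_{\leq 1}^{x}g)$. Using only the $L^\infty_t$ energy on both factors cannot produce the $L^2_t$ integrability required in $\mathcal{N}(g)$. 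Dispersion does not help here either. The Duhamel term $\int_0^tS_\nu(t-\tau)Kg(\tau)\,d\tau$ is controlled only through $\n{\nu^{-1/2}Kg}_{L^2_tL^2_vH^s_x}$, which needs an $L^2_t$ bound on $g$ itself, and this fails for $\mathbf{P}_0g$ as $|\xi|\to0$. This is why the paper uses dispersion only in the high--high piece. There $\frac{|\nabla_x|}{\lra{\nabla_x}}\sim 1$, so $\n{\lra{v}^{-1/2}KP^x_{>1}g}_{L^2_tL^2_vH^s_x}\lesssim\n{P^x_{>1}g}_{L^2_tL^2_vH^s_x}\lesssim\mathcal{E}(g)$. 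Every interaction containing a low-frequency factor is instead handled by Bernstein, which replaces dispersion because $L^\infty_x$ costs no derivatives. The missing idea for the macro--macro low--low term is Guo's trick \cite{Guo04}. Read the low-frequency part of the macroscopic dissipation as an $L^2_t\dot H^1_x$ bound, hence an $L^2_tL^6_x$ bound by Sobolev, and put the other factor in $L^\infty_tL^3_x$ via Bernstein.

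\textbf{Further points.} First, the kernel growth is not absorbed by the $\lra{v}^{1/2}$ dissipation; the dispersive estimate itself absorbs it.
\begin{itemize}
\item Cauchy--Schwarz against $\mu^{1/2}(u)|u-v|^{1/2}$ (compensated by $\lra{v}^{-1/2}$) leaves $\n{|u-v|^{1/2}\lra{\nabla_x}^s[g_1(0,x-vt,v)g_2(0,x-ut,u)]}_{L^2_{t,x,v,u}}$.
\item The space-time Fourier transform turns this into a surface integral over a plane with normal $u-v$. Its measure contributes exactly $|u-v|^{-1}$, which is cancelled by the squared remaining half-power of the kernel.
\item The condition $s>1$ is precisely integrability of $\lra{\eta}^{-2s}$ on that plane.
\item The bound holds for all $t\in\mathbb{R}$, so no unit-interval localization is needed, and the loss term is no harder than the gain term.
\end{itemize}
Second, \eqref{equ:global,f,f0} needs no propagation of Gaussian weights. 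Since $e^{\lra{v}^2}\mu^{1/2}=e$, we have $\n{e^{\lra{v}^2}f}_{L^2_vH^s_x}=e\n{g}_{L^2_vH^s_x}$, and the $f$-statements just restate the $g$-statements. The mechanism you describe (damping from $\nu$ plus boundedness of $K_\mu$) would only give a Gr\"onwall bound growing in $t$, since $L=\nu-K$ has a nontrivial kernel. Third, rather than a continuity argument resting on the near-vacuum local theory of \cite{CDP19}, the paper runs a contraction directly in $X=\mathcal{E}(g)+\n{\lra{v}^{-1/2}(\pa_t+v\cdot\nabla_x+L)g}_{L^2_tL^2_vH^s_x}$. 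This gives existence, uniqueness and Lipschitz dependence in one step.
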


Theorems \ref{thm:main theorem,instability} and
\ref{theorem:global stability} show that the regularity threshold
between stability and instability lies precisely at $s=1$, thus giving
a complete characterization of stability near equilibrium.
The perturbation equation of the hard-sphere
Boltzmann equation around the Maxwellian exhibits rich and intricate structures, including the dispersive effect of the linear transport operator, the dissipation effect arising from the linearized operator, the large-velocity growth induced by the collision kernel, and the nonlocal and asymmetric nature of the gain
and loss terms. Within this intricate interplay of competing
mechanisms, the instability established in Theorem
\ref{thm:main theorem,instability} is in fact jointly driven by the
dispersive effect of the transport operator and the structure of the
loss term. This also provides an insight into the instability
mechanism, revealing how
the stabilizing dissipation effect can be overwhelmed by exploiting the
asymmetry between the gain and loss terms of the collision kernel.

In contrast, the sharp regularity in Theorem \ref{theorem:global stability} is achieved mainly through the interplay between the dissipative effect
of the linearized operator and the dispersive effect.
Although the dissipation effect around equilibrium has been extensively
studied, the novelty of the present result lies in incorporating the dispersive
effect into the analysis around equilibrium for the first time, leading to a sharp global stability result.

\subsection{Outline of the Proof}\label{sec:Outline of the Proof}
The proof of Theorem \ref{thm:main theorem,instability} relies on constructing two sequences of exact solutions to the perturbation Boltzmann equation \eqref{equ:perturbation,Boltzmann,f}, whose initial data are arbitrarily close in a weighted norm but become significantly separated within an arbitrarily short time.

A natural strategy is to pursue the recent line of investigation on ill-posedness mechanisms caused by the structure of the Boltzmann equation itself, as developed in \cite{CH24well,CSZ24well,CGSZ26}.
The near-equilibrium regime, however, is essentially different. The prominent feature is
the dissipation effect, which originates from the linearized collision
operator, has a stabilizing effect on the perturbations. We find that, the new ideas introduced in \cite{CGSZ26}, including the uniformly localized velocity and the forward-solving strategy introduced therein, can actually suppress this dissipation mechanism effectively. In other words, the ill-posedness mechanism remains dominant in the presence of dissipation, which makes the instability near the equilibrium possible.

To realize this instability, we advance the effort of \cite{CGSZ26} and pioneer the investigation of the near-equilibrium regime.
The key goal is to construct an exact solution $f^{M}(t)$ exhibiting pathological behavior, in the sense that its weighted norm decays rapidly over an arbitrarily short time interval. More precisely, we start from an exact solution $f^{M}(t)$ and split it
into an approximate part (ansatz) $f_{\mathrm{a}}^{M}(t)$ together with a correction term
\begin{align*}
f^{M}(t)=&f_{\mathrm{a}}^{M}(t)+f_{\mathrm{c}}^{M}(t).
\end{align*}

In Section \ref{sec:Approximate Solution and Norm Deflation}, we construct the approximate solution,
which admits the decomposition
\begin{align*}
f_{\mathrm{a}}^{M}(t)=f_{\mathrm{r}}^{M}(t)+f_{\mathrm{b}}^{M}(t),
\end{align*}
where $f_{\mathrm{r}}^{M}(t)$ serves as the principal term, while
$f_{\mathrm{b}}^{M}(t)$ a perturbation driving the ill-posed behavior. The principal term $f_{\mathrm{r}}^{M}(t)$ is required to satisfy
\begin{align}
&\partial_{t}f_{\mathrm{r}}^{M}=-Q^{-}(f_{\mathrm{r}}^{M},f_{\mathrm{b}}^{M})=-f_{\mathrm{r}}^{M}
A[f_{\mathrm{b}}^{M}],\label{equ:fr,equation,intro}
\end{align}
with
\begin{align*}
 A[f_{\mathrm{b}}^{M}]=\int_{\mathbb{R}^{3}}\int_{\mathbb{S}^{2}} f_{\mathrm{b}}^{M}(u) |(u-v)\cdot \omega| \,du d\omega.
\end{align*}
It follows from \eqref{equ:fr,equation,intro} that $f_{\mathrm{r}}^{M}$ indeed exhibits a rapid decay behaviour,
provided that the initial data $f_{\mathrm{r}}^{M}(0)$ is properly constructed
and $A[f_{\mathrm{b}}^{M}]$ is suitably controlled.

The key point here also lies in the construction of $f_{\mathrm{b}}^{M}$. Based on the features of the perturbation equation  \eqref{equ:perturbation,Boltzmann,f} near equilibrium, we require $f_{\mathrm{b}}^{M}$ to satisfy the damped transport equation
\begin{align}
\partial _{t}f_{\mathrm{b}}^{M}+v\cdot \nabla_{x}f_{\mathrm{b}}^{M}+\nu f_{\mathrm{b}}^{M}=0,\label{equ:fb,equation,intro}
\end{align}
The damping term $\nu f_{\mathrm{b}}^{M}$ is a structure specific to the near-equilibrium case,
and cannot be treated simply as an error term. In fact, in other norm scales, $\nu f_{\mathrm{b}}^{M}$ could behave as a large growth term with respect to the parameter $M$.

Such a damping, however, brings about a stabilization effect, forcing
$f_{\mathrm{b}}^{M}(t)$ to decay in time and thus disabling the mechanism that
drives the pathological behaviour.
This difficulty is overcome in the construction of the term
$f_{\mathrm{b}}^{M}(t)$ by incorporating a uniform localization in velocity.
Owing to this feature, the damping effect is kept under effective control on the
$O(1)$ time scale, namely $e^{-\nu t}\sim 1$. Consequently, the ill-posedness mechanism induced by the loss term continues to occupy a dominant position throughout the construction of the approximate solution.

With the approximate solution at hand, a further difficulty lies in the
treatment of the correction term $f_{\mathrm{c}}^{M}(t)$. In view of the
construction of the approximate solution, it is straightforward to verify that $f_{\mathrm{c}}^{M}(t)$
satisfies the nonlinear equation
\begin{align}\label{equ:correction term,fc,intro}
&\pa_{t}f_{\mathrm{c}}^{M} + v\cdot \nabla_{x} f_{\mathrm{c}}^{M}+\nu f_{\mathrm{c}}^{M}\\
=&\pm Q^\pm(f_{\mathrm{c}}^{M},f_{\mathrm{a}}^{M}) \pm
Q^\pm(f_{\mathrm{a}}^{M},f_{\mathrm{c}}^{M}) \pm Q^\pm(f_{\mathrm{c}}^{M},f_{\mathrm{c}
}^{M})+ K_{\mu}f_{\mathrm{c}}^{M}-F_{\text{err}}^{(1)}-F_{\text{err}}^{(2)},\notag
\end{align}
where the error terms are given by
\begin{align}\label{equ:Ferr,error term estimate,intro}
F_{\mathrm{err}}^{(1)}=&\pa_{t}f_{\mathrm{a}}^{M}+v\cdot \nabla_{x}f_{\mathrm{a}}^{M}+Q^{-}(f_{\mathrm{a}}^{M},f_{\mathrm{a}}^{M})-Q^{+}(f_{\mathrm{a}}^{M},f_{\mathrm{a}}^{M})+\nu f_{\mathrm{b}}^{M}\\
=&v\cdot \nabla_{x}\,f_{\mathrm{r}}^{M}-Q^{+}(f_{\mathrm{r}}^{M},f_{\mathrm{b}}^{M})\mp Q^{\pm}(f_{\mathrm{b}}^{M},f_{\mathrm{r}}^{M})\mp Q^{\pm}(f_{\mathrm{r}}^{M},f_{
\mathrm{r}}^{M})\mp Q^{\pm}(f_{\mathrm{b}}^{M},f_{\mathrm{b}}^{M}),\notag\\
F_{\mathrm{err}}^{(2)}=&\nu f_{\mathrm{r}}^{M}-K_{\mu}f_{\mathrm{r}}^{M}-K_{\mu}f_{\mathrm{b}}^{M}\\
=&\mp Q^{\pm}(f_{\mathrm{r}}^{M},\mu)\pm Q^{\pm}(\mu,f_{\mathrm{r}}^{M})-Q^{+}(f_{\mathrm{b}}^{M},\mu)-Q^{+}(\mu,f_{\mathrm{b}}^{M})
+Q^{-}(\mu,f_{\mathrm{b}}^{M})\notag.
\end{align}

In order to construct the exact solution, we need to establish the existence of the correction term $f_{\mathrm{c}}^{M}(t)$ and prove that it remains negligible as $M\to\infty$. Accordingly, we introduce the $Z$-norm
\begin{align}\label{equ:Z-norm,error term estimate,intro}
\n{f}_{Z}=M^{s_{0}}\n{f}_{L_{v}^{2,1}L_{x}^{2}}+M^{s_{0}-1}\n{\nabla_{x}f}_{L_{v}^{2,1}L_{x}^{2}}
+\n{f}_{L_{v}^{1,1}L_{x}^{\infty}}+
M^{-1}\n{\nabla_{x}f}_{L_{v}^{1,1}L_{x}^{\infty}}.
\end{align}
with $\n{f}_{L_{v}^{p,1}}=\n{\lra{v}f}_{L_{v}^{p}}$, which provides the working space for the perturbation argument.

The next key step is to prove that these error terms, formed jointly by the approximate solution and the equilibrium through the collision operator, are indeed negligible with respect to the parameter $M$ in the tailored $Z$-norm.
However, this is far from trivial. Let us illustrate with the equilibrium.
The equilibrium itself yields no decaying smallness, so the smallness must come from the approximate solution. Furthermore, the collision structure is important, since a direct example is that the damping term $Q^{-}(f_{\mathrm{b}}^{M},\mu)(t)=\nu f_{\mathrm{b}}^{M}(t)$ can indeed display large growth in $M$. Consequently, the uniform-in-time $Z$-norm estimates for the error terms rely on the analysis of the collision structure and on the interaction between the equilibrium and the approximate solution.

Specifically, in Section \ref{section:Estimates for Error Terms}, we focus on establishing uniform-in-time $Z$-norm
estimates for the error terms
\begin{equation}\label{equ:Ferr_bound,1,intro}
\bbn{e^{\lra{v}^{2}}\int_{t_{0}}^{t}S_{\nu}(t-\tau)F_{\text{err}}^{(1)}(\tau)\,d\tau}_{Z}\lesssim M^{-\delta},
\end{equation}
\begin{equation}\label{equ:Ferr_bound,2,intro}
\bbn{e^{\lra{v}^{2}}\int_{t_{0}}^{t}S_{\nu}(t-\tau)F_{\text{err}}^{(2)}(\tau)\,d\tau}_{Z}\lesssim M^{-\delta},
\end{equation}
where $S_{\nu}(t):=e^{-t\nu}e^{-tv\cdot \nabla_{x}}$ denotes the damped
transport operator. Building upon \eqref{equ:Ferr_bound,1,intro}--\eqref{equ:Ferr_bound,2,intro}, in Section \ref{section:Z-norm Estimates for Correction Terms} we further
derive the following uniform-in-time $Z$-norm estimates for the correction term
\begin{equation}\label{E:fc_bound2,intro}
\sup_{t\in[0,T_{*}]}\n{e^{(1-\kappa t)\lra{v}^{2}}f_{\mathrm{c}}^{M}(t)}_{Z}
\lesssim M^{-\frac{\delta}{2}}.
\end{equation}
Finally, in Section \ref{section:Proof of Instability}, we complete the
proof of instability Theorem
\ref{thm:main theorem,instability}.

In Section
\ref{sec:Low-regularity Global Stability},
we get into the analysis of the low-regularity global stability problem. Substantial progress on the global stability problem has been made under regularity assumptions that are as low as possible, for instance in \cite{Duan08,DLX16,Guo04,LYY04,NI77,Uka74}. The results available so far, still require the regularity $s\geq \frac{3}{2}$.
To reach the sharp regularity
threshold, our main idea is to integrate the dispersive effects into the
dissipative framework, notably the energy method developed by Guo
\cite{Guo04}.

 In Section \ref{section:Dissipation Energy Estimates}, we review the standard dissipative energy estimates
\begin{align}\label{equ:dissipation energy estimate,g,intro}
\mathcal{E}(g)
\lesssim& \n{g(0)}_{L_{v}^{2}H_{x}^{s}}+\mathcal{N}(g).
\end{align}
where the energy $\mathcal{E}(g)$ and $\mathcal{N}(g)$ is given by
\begin{align}
\mathcal{E}(g):=&\n{g}_{L_{t}^{\infty}L_{v}^{2}H_{x}^{s}}+
\bn{\lra{v}^{\frac{1}{2}}\mathbf{P}_{1}g}_{L_{t}^{2}L_{v}^{2}H_{x}^{s}}+
\bbn{\frac{|\nabla_{x}|}{\lra{\nabla_{x}}}\mathbf{P}_{0}g}_{L_{t}^{2}L_{v}^{2}H_{x}^{s}},\label{equ:E,g,energy,intro}\\
\mathcal{N}(g):=&\n{\lra{v}^{-\frac{1}{2}}(\pa_{t}+v\cdot \nabla_{x}+L)g}_{L_{t}^{2}L_{v}^{2}H_{x}^{s}}=\n{\lra{v}^{-\frac{1}{2}}\Gamma(g,g)}_{L_{t}^{2}L_{v}^{2}H_{x}^{s}}.
\label{equ:N,g,energy,intro}
\end{align}
The dissipation energy estimates \eqref{equ:dissipation energy estimate,g,intro} have been investigated by many authors under various settings, for instance in \cite{Duan08,Duan11,DLS25,Guo03,Guo03vlasov,Guo04,Guo10,LYY04}.

Due to the dissipative energy estimates \eqref{equ:dissipation energy estimate,g,intro}, it remains to establish the $L_{t}^{2}$ estimate for the nonlinear term.
This is different from the near-vacuum setting. In our previous work \cite{CSZ23sharp}, we established the
$L_{t}^{1}$ estimate through the Duhamel expansion together with the dispersive effect of the transport operator. Therefore, we first express
$g$ in the Duhamel form associated with the damped transport operator
\begin{align}
g(t)=S_{\nu}(t)g(0)+\int_{0}^{t}S_{\nu}(t-s)Kg(s)ds+
\int_{0}^{t}S_{\nu}(t-s)\Gamma(g,g)(s)ds.
\end{align}
Within this decomposition, the initial part $S_{\nu}(t)g(0)$ and the nonlinear term $\Gamma(g,g)$ can be treated by
exploiting the dispersive effects of the damped transport operator. However, the main point is to absorb the linear part $Kg$. The key idea is to prove dispersive-type bilinear estimates for the Duhamel integrals and combine them with the dissipative estimates so that the linear part $Kg$ is absorbed into the energy $\mathcal{E}(g)$.

Specifically, in Section \ref{section:Dispersive-type
Bilinear Estimates}, we establish dispersive-type bilinear estimates for the nonlinear term as follows
\begin{align*}
\n{\lra{v}^{-\frac{1}{2}}\Gamma^{\pm}(S_{\nu}(t)g_{1}(0),S_{\nu}(t)g_{2}(0))}_{L_{t}^{2}L_{v}^{2}H_{x}^{s}}\lesssim & \n{g_{1}(0)}_{L_{v}^{2}H_{x}^{s}}\n{g_{2}(0)}_{L_{v}^{2}H_{x}^{s}},\\
\bbn{\lra{v}^{-\frac{1}{2}}\Gamma^{\pm}\lrs{\int_{0}^{t}S_{\nu}(t-t_{1})g_{1}(t_{1})
dt_{1},S_{\nu}(t)g_{2}(0)}}_{L_{t}^{2}L_{v}^{2}H_{x}^{s}}
\lesssim &
\bn{\lra{v}^{-\frac{1}{2}}
g_{1}(t_{1})}_{L_{t_{1}}^{2}L_{v}^{2}H_{x}^{s}}\n{g_{2}(0)}_{L_{v}^{2}H_{x}^{s}},\\
\bbn{\lra{v}^{-\frac{1}{2}}\Gamma^{\pm}\lrs{S_{\nu}(t)g_{1}(0),\int_{0}^{t}S_{\nu}(t-t_{2})
g_{2}(t_{2})dt_{2}}}_{L_{t}^{2}L_{v}^{2}H_{x}^{s}}
\lesssim &
\n{g_{1}(0)}_{L_{v}^{2}H_{x}^{s}}
\bn{\lra{v}^{-\frac{1}{2}}
g_{2}(t_{2})}_{L_{t_{2}}^{2}L_{v}^{2}H_{x}^{s}},
\end{align*}
\begin{align*}
&\bbn{\lra{v}^{-\frac{1}{2}}\Gamma^{\pm}\lrs{\int_{0}^{t}S_{\nu}(t-t_{1})g_{1}(t_{1})
dt_{1},\int_{0}^{t}S_{\nu}(t-t_{2})g_{2}(t_{2})dt_{2}}}_{L_{t}^{2}L_{v}^{2}H_{x}^{s}}\\
\lesssim &
\bn{\lra{v}^{-\frac{1}{2}}
g_{1}(t_{1})}_{L_{t_{1}}^{2}L_{v}^{2}H_{x}^{s}}\bn{\lra{v}^{-\frac{1}{2}}
g_{2}(t_{2})}_{L_{t_{2}}^{2}L_{v}^{2}H_{x}^{s}}.\notag
\end{align*}
These estimates
serve as the key tool for pushing the regularity down to the sharp
level $s>1$.

With these dissipation energy estimates and dispersive-type bilinear estimates in place, the proof of global stability Theorem \ref{theorem:global stability} is carried out in Section \ref{section:Proof of Global Stability}.

\section{Instability}
In this section, we continue the work \cite{CGSZ26} for the ill-posedness mechanism of
the hard-sphere Boltzmann equation to investigate the instability mechanism near equilibrium.
We construct two sequences of exact solutions to the perturbed
Boltzmann equation \eqref{equ:perturbation,Boltzmann,f}. One sequence exhibits
the pathological behavior, while the other remains stable near its initial
datum.

In Section \ref{sec:Approximate Solution and Norm Deflation}, we construct the
approximate solution satisfying the norm deflation property and provide Sobolev norm estimates.
In Section \ref{section:Estimates for Error Terms}, we introduce a tailored
$Z$-norm, which provides the working space for the perturbation argument that
turns the approximate solution into an exact one, and establish uniform-in-time
$Z$-norm estimates for the error terms.
In Section \ref{section:Z-norm Estimates for Correction Terms}, we derive
uniform-in-time $Z$-norm estimates for the correction term. In addition, we
construct the second solution $g(t)$, which is stable near its initial
datum.
In Section \ref{section:Proof of Instability}, we complete the proof of
Theorem \ref{thm:main theorem,instability}.

\subsection{Approximate Solution and Norm Deflation} \label{sec:Approximate Solution and Norm Deflation}
We decompose the approximate solution $f_{\mathrm{a}}$ into two parts
\begin{align*}
f_{\mathrm{a}}(t)=f_{\mathrm{r}}(t)+f_{\mathrm{b}}(t).
\end{align*}
Here, for notational simplicity, we omit the superscript parameter $M$ throughout.

Based on the specific structure of the perturbation equation \eqref{equ:perturbation,Boltzmann,f},
we construct $f_{\mathrm{b}}$
to satisfy the damped transport equation
\begin{align}\label{equ:fb equation}
\pa_{t}f_{\mathrm{b}}+v\cdot \nabla_{x}f_{\mathrm{b}}+\nu f_{\mathrm{b}}=0.
\end{align}
Consequently, we obtain the explicit formula
\begin{align*}
f_{\mathrm{b}}(t)=e^{-\nu t}f_{\mathrm{b}}(0,x-vt,v).
\end{align*}

For the construction $f_{\mathrm{b}}$, we employ a lattice system that is roughly uniformly distributed on the unit hemisphere. More precisely, we take the points on the unit sphere defined by
 \begin{align}\label{equ:3d,eij}
 e_{i,j}=\lrs{\sin(\frac{\pi j}{2M}) \cos( \frac{2\pi i}{j}),\sin(\frac{\pi j}{2M})\sin(\frac{2\pi i}{j}),\cos (\frac{\pi j}{2M})},\quad 1\leq i\leq j\leq M.
 \end{align}
 These points are roughly uniformly spaced across the upper hemisphere, with the total number of points scaling as order $M^{2}$.
 The perturbation term $f_{\mathrm{b}}$ is then constructed as a superposition of tubes aligned along these lattice points in various directions, given by
 \begin{equation}\label{equ:fb,t}
f_{\mathrm{b}}(t,x,v)=M^{1-s}e^{-\nu t}\sum_{1\leq i\leq j\leq M}K_{i,j}(x-vt)I_{i,j}(v),
\end{equation}
where
\begin{align*}
K_{i,j}(x)&=\chi (MP_{e_{i,j}}^{\perp }x)\chi (P_{e_{i,j}}x), \\
I_{i,j}(v)&=\chi(MP_{e_{i,j}}^{\perp }v)\chi (10P_{e_{i,j}}(v-e_{i,j})).
\end{align*}
Here, $\chi(x)$ is a smooth cutoff function supported in the unit ball $B(0,1)$. We denote by $P_{e_{i,j}}$ the orthogonal projection onto the one-dimensional subspace spanned by $e_{i,j}$, and by $P_{e_{i,j}}^{\perp}$ the orthogonal projection onto the orthogonal complement $\lr{e_{i,j}}^{\perp}$.

On the other hand, the principle part $f_{\mathrm{r}}$ is designed to satisfy the drift-free linearized loss-term Boltzmann equation
\begin{align}\label{equ:fr,loss-term}
\partial_{t}f_{\mathrm{r}}=-Q^{-}(f_{\mathrm{r}},f_{\mathrm{b}}).
\end{align}
The initial datum of the principal term $f_{\mathrm{r}}$ is localized near the origin, that is,
$$f_{\mathrm{r}}(0,x,v)=M^{\frac{3}{2}-s_{0}}N^{\frac{3}{2}} \chi (100Mx)\chi(Nv).$$
Using \eqref{equ:fr,loss-term}, we may write the solution $f_{\mathrm{r}}(t)$ as
\begin{equation}\label{equ:fr}
f_{\mathrm{r}}(t,x,v)=M^{\frac{3}{2}-s_{0}}N^{\frac{3}{2}}\exp \left[
-\int_{0}^{t}A[f_{\mathrm{b}}](\tau,x,v)\,d\tau \right] \chi (100Mx)\chi (Nv),
\end{equation}
where the term $A[f_{\mathrm{b}}]$ is given, up to a constant, by
\begin{equation}\label{equ:Afb}
A[f_{\mathrm{b}}](t,x,v)\sim \int_{\R^{3}}|u-v|f_{\mathrm{b}}(t,x,u)du.
\end{equation}

\vspace{0.5em}
\paragraph{\textbf{Parameter selection}.}
The parameters are chosen as follows
\begin{align}
	&M\gg 1, \quad  N=M^{10},\label{equ:condition,parameters}\\
&\max\lr{s_{0},\frac{3}{4}}<s<1, \label{equ:condition,s,s0}\\
&T_{*}=M^{s-1}(\ln\ln M).\label{equ:T*}
\end{align}
We remark that the parameter $N$ is chosen to be $M^{10}$ merely for
convenience in the subsequent estimates. In fact, the more general condition
$N\gg M$ would suffice.

\vspace{0.5em}
\noindent\textbf{Sobolev Norm Estimates for Approximate Solutions.}
We review the geometry estimates of directional projections and the Sobolev norm estimates for
$f_{\mathrm{b}}$ and $f_{\mathrm{r}}$ from \cite{CGSZ26}. Compared with the construction in \cite{CGSZ26}, our $f_{\mathrm{b}}(t)$
contains an extra dissipative factor $e^{-\nu(v)t}$. Under the uniform compact support constraint for $f_{\mathrm{b}}(t)$, namely,
\begin{equation}\label{equ:uniform compact,fb}
\operatorname{Supp}f_{\mathrm{b}}(t)\subset B(0,10).
\end{equation}
this dissipative factor is negligible on the finite time scale $t\in[0,1]$, since $e^{-\nu(v)t}\sim 1$. Consequently, the Sobolev and pointwise upper and lower bounds for $f_{\mathrm{b}}(t)$, and hence
$f_{\mathrm{r}}(t)$, established in \cite{CGSZ26} also hold for our construction.

\begin{lemma}[Separation of directions and summation bounds, {\cite[Lemma 2.1]{CGSZ26}}]\label{lemma:eij,property}
 The following estimates hold
\begin{align}
|e_{i_{1},j_{1}}-e_{i_{2},j_{2}}|\geq& \frac{1}{M},\quad \text{for $(i_{1},j_{1})\neq (i_{2},j_{2})$},\label{equ:gap estimate,eij}\\
\sum_{i,j}|\nabla_{x}^{k}K_{i,j}(x)|\lesssim& M^{2+k}\lrs{\frac{1}{M|x|+1}}^{2}\chi(x),\quad k=0,1,\label{equ:Kij,sum}\\
\sum_{i,j}I_{i,j}(v)\leq& 1_{\lr{\frac{3}{4}\leq|v|\leq \frac{5}{4}}}(v).\label{equ:Iij,sum}
\end{align}
\end{lemma}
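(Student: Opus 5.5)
We prove the three estimates separately; each uses only the explicit form of $e_{i,j}$ and the tube geometry of $K_{i,j}$, $I_{i,j}$.

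\emph{Gap estimate \eqref{equ:gap estimate,eij}.} Write $\theta_j=\frac{\pi j}{2M}$ for the polar angle and $\phi_{i,j}=\frac{2\pi i}{j}$ for the azimuth.

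If $j_1\neq j_2$, the chordal distance between the two points is at least the distance between their heights. This gives
\begin{equation*}
|\cos\theta_{j_1}-\cos\theta_{j_2}|\gtrsim |\theta_{j_1}-\theta_{j_2}|\min(\sin\theta)\ \ \text{or}\ \ |\theta_{j_1}-\theta_{j_2}|^2 .
\end{equation*}
Here one should compare polar angles directly: the spherical distance is at least $|\theta_{j_1}-\theta_{j_2}|\ge \frac{\pi}{2M}$, and chord $\ge \frac{2}{\pi}\cdot$ arc. So $|e_{i_1,j_1}-e_{i_2,j_2}|\ge \frac{1}{M}$.

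If $j_1=j_2=j$ and $i_1\neq i_2$, both points lie on the circle of radius $\sin\theta_j$. Their azimuths differ by at least $\frac{2\pi}{j}$ modulo $2\pi$, since $1\le i\le j$ gives $j$ distinct residues. The chord is therefore at least
\begin{equation*}
2\sin\theta_j\sin\Big(\frac{\pi}{j}\Big)\gtrsim \frac{j}{M}\cdot\frac{1}{j}=\frac{1}{M},
\end{equation*}
using $\sin\theta_j\ge \frac{2}{\pi}\theta_j= \frac{j}{M}$. The constant will need a little bookkeeping, and $i=j$ versus $i=0$ must be handled, but the order $\frac{1}{M}$ is clear.

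\emph{Summation bound \eqref{equ:Kij,sum}.} Fix $x$ with $|x|\le 1$; outside this ball every $K_{i,j}$ vanishes. Then $K_{i,j}(x)\neq0$ requires $|P^\perp_{e_{i,j}}x|\le \frac1M$. For $|x|\lesssim \frac1M$ we bound the sum trivially by the number of terms, $M^2$. For $|x|\gg \frac1M$ the condition says the angle between $\pm e_{i,j}$ and $\frac{x}{|x|}$ is at most $\frac{C}{M|x|}$. This is a spherical cap of radius $\frac{1}{M|x|}$.

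By the $\frac1M$-separation just proved, the number of lattice points in such a cap is
\begin{equation*}
\lesssim \frac{(M|x|)^{-2}}{M^{-2}}+1=|x|^{-2}+1 .
\end{equation*}
So the count is $\lesssim M^2(M|x|+1)^{-2}$. Each term is bounded by $1$ for $k=0$, and by $M$ for $k=1$, since the transverse scaling is $M$. Finally, $\chi(x)$ on the right-hand side is to be read as a cutoff of $B(0,1)$. This counting is the main point; the packing argument for caps (disjoint balls of radius $\frac{1}{2M}$ on the sphere) is standard.

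\emph{Disjointness bound \eqref{equ:Iij,sum}.} On the support of $I_{i,j}$ we have $|P^\perp_{e_{i,j}}v|\le\frac1M$ and $|P_{e_{i,j}}v-1|\le \frac1{10}$. Hence $|v|\in[\frac34,\frac54]$ for $M$ large, and the direction $\frac{v}{|v|}$ is within angle $\lesssim\frac{1}{M}$ of $e_{i,j}$; indeed it is within $\frac{1}{M}\cdot\frac{10}{9}$. Since $0\le\chi\le1$, it suffices to show the supports are disjoint.

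Two distinct points $e_{i_1,j_1}$ and $e_{i_2,j_2}$ both within $\frac{10}{9M}$ of $\frac{v}{|v|}$ would be within $\frac{20}{9M}$ of each other. This is not excluded by the $\frac1M$ gap as stated. So the main obstacle is constants: we would either shrink the support of $\chi$, e.g. take $\chi$ supported in $B(0,\frac14)$, or sharpen the gap constant in \eqref{equ:gap estimate,eij} to something like $\frac{3}{M}$ by reparametrizing. Alternatively, we settle for $\lesssim$, allowing bounded overlap by the same packing count, which suffices for all later uses. With $\chi$ supported in $B(0,1)$ literally, we would verify the sharper gap constant from the explicit formula.
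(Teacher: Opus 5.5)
The paper does not prove this lemma (it is imported from \cite{CGSZ26}), so your argument has to stand on its own.

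\textbf{The first two estimates.} For \eqref{equ:gap estimate,eij} and \eqref{equ:Kij,sum} your argument is correct.
\begin{itemize}
\item Across rings, the geodesic distance is at least $|\theta_{j_1}-\theta_{j_2}|\ge\frac{\pi}{2M}$, and the chord is at least $\frac{2}{\pi}$ times the arc. This gives exactly $\frac{1}{M}$.
\item On a common ring, $2\sin\theta_j\sin\frac{\pi}{j}\ge 2\cdot\frac{j}{M}\cdot\frac{2}{j}=\frac{4}{M}$ for $j\ge 2$, so no further bookkeeping is needed.
\item The cap-packing count is the right mechanism for \eqref{equ:Kij,sum}.
\end{itemize}
One small correction: $K_{i,j}$ is supported in $|x|<\sqrt{1+M^{-2}}$, not in $B(0,1)$. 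So the factor $\chi(x)$ on the right of \eqref{equ:Kij,sum} has to be read as, e.g., $1_{B(0,2)}(x)$.

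\textbf{The third estimate.} The genuine problem is \eqref{equ:Iij,sum}. You are right that the $\frac{1}{M}$ gap does not force disjoint supports. But your final fallback, verifying a sharper gap from the explicit formula with $\operatorname{supp}\chi\subset B(0,1)$, cannot succeed. In fact the inequality with constant $1$ is false for some admissible $\chi$.
\begin{itemize}
\item Since $e_{j,j}=(\sin\theta_j,0,\cos\theta_j)$ for every $j$, the points $e_{j,j}$ and $e_{j+1,j+1}$ lie on one meridian at angle exactly $\frac{\pi}{2M}$.
\item Hence the minimal separation of the lattice is $2\sin\frac{\pi}{4M}\approx\frac{\pi}{2M}$. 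This is below the roughly $\frac{2}{M}$ needed to separate tubes of angular half-width about $\frac{1}{M}$.
\item Let $v$ be the unit vector bisecting them. For both $e\in\{e_{j,j},e_{j+1,j+1}\}$ one has $M|P^{\perp}_{e}v|=M\sin\frac{\pi}{4M}<\frac{\pi}{4}$ and $10|v\cdot e-1|\le\frac{10\pi^{2}}{32M^{2}}$.
\item So if $\chi\equiv 1$ on $B(0,\frac{9}{10})$, then $\sum_{i,j}I_{i,j}(v)\ge 2$, while the right-hand side equals $1$.
\end{itemize}
No proof of the literal statement exists. You should commit to one of your other options and state it explicitly.
\begin{itemize}
\item[(a)] The bounded-overlap bound $\sum_{i,j}I_{i,j}\lesssim 1_{\{\frac{3}{4}\le|v|\le\frac{5}{4}\}}$. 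Every $e_{i,j}$ with $I_{i,j}(v)\neq0$ lies within angle $\frac{10}{9M}$ of $v/|v|$. By the same packing argument you used for \eqref{equ:Kij,sum}, such a cap contains only $O(1)$ points of a $\frac{1}{M}$-separated set.
\item[(b)] The literal bound under the extra hypothesis $\operatorname{supp}\chi\subset B(0,\frac{1}{4})$. Then each tube has angular half-width below $\frac{1}{2M}$. Since the angle dominates the chord, \eqref{equ:gap estimate,eij} gives disjointness.
\end{itemize}
Version (a) is all the later arguments use: the $L^{\infty}_{v}$ case of \eqref{equ:sobolev norm,fb,LvpLxinf}, and the final step of Lemma \ref{lemma:Z-norm estimate,Q+,fb,mu}. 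That is the statement worth proving.
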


\begin{lemma}[Sobolev norm estimates for $f_{\mathrm{b}}$,  {\cite[Lemmas 2.2, 2.3]{CGSZ26}}]\label{lemma:Hs,bounds on fb}
For $t\in [0,\frac{1}{4}]$, we have
\begin{align}
\n{e^{\lra{v}^{2}}f_{\mathrm{b}}(t)}_{L_{v}^{2}H_{x}^{s_{0}}}
\lesssim & M^{s_{0}-s},\label{equ:critical norm estimate for fb}\\
\n{e^{\lra{v}^{2}}f_{\mathrm{b}}(t)}_{L_{v}^{p}L_{x}^{\infty}}
+M^{-1}\n{e^{\lra{v}^{2}}\nabla_{x}f_{\mathrm{b}}(t)}_{L_{v}^{p}L_{x}^{\infty}}\lesssim& M^{1-s},\quad p\in[1,\infty],
\label{equ:sobolev norm,fb,LvpLxinf}\\
\n{e^{\lra{v}^{2}}f_{\mathrm{b}}(t)}_{L_{x}^{2}L_{v}^{1}}+
M^{-1}\n{e^{\lra{v}^{2}}\nabla_{x}f_{\mathrm{b}}(t)}_{L_{x}^{2}L_{v}^{1}}\lesssim& M^{-\frac{1}{2}-s}.\label{equ:sobolev norm,fb,L2L1}
\end{align}

For $k=0,1,2$, we have the pointwise upper bound
\begin{align}\label{equ:upper bound,fr,k}
\big|\chi(Nv) \nabla_{x}^{k}A[f_{\mathrm{b}}](t,x,v)\big |\lesssim& M^{k+1-s},\\
A[f_{\mathrm{b}}]\lesssim& M^{1-s}\lrs{\frac{1}{M|x|+1}}^{2}.\label{equ:upper bound,Afb}
\end{align}
Moreover, we have the corresponding lower bound
\begin{equation}\label{equ:upper lower bound,fr}
\chi(Nv)A[f_{\mathrm{b}}] \gtrsim  M^{1-s} \chi(Nv),\quad \text{for $|x|\leq \frac{1}{100M}$}.
\end{equation}
\end{lemma}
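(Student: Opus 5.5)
The plan is to reduce everything to the corresponding bounds in \cite{CGSZ26} for the undamped tubes $\tilde f_{\mathrm{b}}(t,x,v)=M^{1-s}\sum_{i,j}K_{i,j}(x-vt)I_{i,j}(v)$, so that $f_{\mathrm{b}}=e^{-\nu t}\tilde f_{\mathrm{b}}$. By \eqref{equ:Iij,sum} the velocity support of every $I_{i,j}$ lies in the shell $\frac34\le |v|\le \frac54$. On this shell $\nu(v)\sim 1$ and $e^{\lra{v}^2}\lesssim 1$. Hence for $t\in[0,\frac14]$ we have $e^{-\nu t}\in[c,1]$ with $c>0$ independent of $M$, and the factor $e^{\lra{v}^2}$ is a bounded weight. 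Since $\nu$ does not depend on $x$, the factor $e^{-\nu t}$ commutes with $\nabla_x$ and with $\lra{\nabla_x}^{s_0}$. So every upper bound for $\tilde f_{\mathrm{b}}$ transfers verbatim, and every pointwise lower bound transfers with a loss of at most the constant $c$.

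\textbf{Upper bounds.} For \eqref{equ:sobolev norm,fb,LvpLxinf}, I would use \eqref{equ:Kij,sum} together with the fact that the $I_{i,j}$ have disjoint supports (from \eqref{equ:gap estimate,eij} and the $M^{-1}$ transversal localization). At each $v$ at most one tube contributes, giving a pointwise bound $M^{1-s}\cdot M^{k}$; the $L_v^p$ integration is harmless because the $v$-support is bounded.

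For \eqref{equ:sobolev norm,fb,L2L1}, I would fix $x$ and integrate in $v$. The tube $(i,j)$ contributes only when $x-vt$ lies in the $M^{-1}$-thin cylinder around $e_{i,j}$, and $v$ ranges over a set of measure $M^{-2}$. Summing over tubes with \eqref{equ:Kij,sum} gives $\int \tilde f_{\mathrm{b}}\,dv\lesssim M^{1-s}\cdot M^{-2}\cdot M^{2}(M|x|+1)^{-2}$ up to the transport shift. Taking $L_x^2$ of $(M|x|+1)^{-2}$ yields $M^{-3/2}$, hence $M^{-\frac12-s}$ overall. The gradient version is the same with one more factor of $M$.

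For \eqref{equ:critical norm estimate for fb}, I would interpolate between $L^2_vL^2_x$, which is $M^{1-s}\cdot M^{-1}\cdot M^{-1}$ per unit, and $L^2_v\dot H^1_x$, using the orthogonality of the disjoint $v$-supports. This gives $M^{s_0-s}$.

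\textbf{Bounds on $A[f_{\mathrm{b}}]$.} Since $|u-v|\sim 1$ for $|v|\le N^{-1}$ and $u$ in the shell, the bounds \eqref{equ:upper bound,fr,k} and \eqref{equ:upper bound,Afb} follow from the $L_v^1$ pointwise bounds with $\nabla_x^k$ falling on $K_{i,j}$, using \eqref{equ:Kij,sum} (and one more derivative, costing $M$, for $k=2$).

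\textbf{Main obstacle: the lower bound \eqref{equ:upper lower bound,fr}.} Here positivity is essential. All terms are nonnegative, and $e^{-\nu t}\ge c$. For $|x|\le (100M)^{-1}$ and $t\le\frac14$, I would show that for each $(i,j)$ the set of $u$ in the support of $I_{i,j}$ with $x-ut$ still inside the $K_{i,j}$ tube has measure $\gtrsim M^{-2}$. This holds because $u-e_{i,j}$ is transversally $O(M^{-1})$ and $t\le\frac14$, so the transversal drift is $\le M^{-1}/4$, while $\chi\gtrsim 1$ on half the ball. Summing over the $\sim M^2$ tubes then gives $A\gtrsim M^{1-s}\cdot M^{-2}\cdot M^2$. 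Checking this geometry uniformly in $t\in[0,\frac14]$, including near the equatorial lattice directions, is the delicate step. I would take it directly from \cite[Lemma 2.3]{CGSZ26} after factoring out $e^{-\nu t}\ge c$.
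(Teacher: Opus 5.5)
Your proposal is correct and follows the paper's own route: the paper gives no independent proof here. It only observes that on the velocity support $\{\frac34\le|v|\le\frac54\}$ of $f_{\mathrm{b}}$ the damping factor satisfies $e^{-\nu(v)t}\sim 1$ for $t\le\frac14$, and then imports all the upper and lower bounds for the undamped tubes from \cite[Lemmas 2.2, 2.3]{CGSZ26}. Your extra sketches of those undamped bounds are also sound, with one small caveat: the gap $|e_{i_1,j_1}-e_{i_2,j_2}|\ge M^{-1}$ gives only bounded overlap of the supports of the $I_{i,j}$, not disjointness. Bounded overlap is all that your $L^\infty$ and $L^2$ arguments actually need.
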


\begin{lemma}[Sobolev norm estimates for $f_{\mathrm{r}}$,  {\cite[Lemmas 2.4, 4.1]{CGSZ26}}]\label{lemma:bounds on fr}
For $t\in[0,\frac{1}{4}]$, we have the upper bound
\begin{align}
\n{e^{\lra{v}^{2}}f_{\mathrm{r}}(t)}_{L_{v}^{2}H_{x}^{s_{0}}}\lesssim&  \exp[-t M^{1-s}],\label{equ:critical norm estimate for fr,upper bound}\\
\n{f_{\mathrm{r}}(0)}_{L_{v}^{2}H_{x}^{s_{0}}}\gtrsim&  1.\label{equ:critical norm estimate for fr,lower bound}
\end{align}
In particular, the following estimates hold
\begin{align}
\n{e^{\lra{v}^{2}}f_{\mathrm{r}}(0)}_{L_{v}^{2}H_{x}^{s_{0}}}\sim & \n{f_{\mathrm{r}}(0)}_{L_{v}^{2}H_{x}^{s_{0}}}\sim 1,\label{equ:upper bound,fr,t=0}\\
\n{e^{\lra{v}^{2}}f_{\mathrm{r}}(T_{*})}_{L_{v}^{2}H_{x}^{s_{0}}}\lesssim & \frac{1}{ \ln M},\label{equ:lower bound,fr,T}
\end{align}
where $T_{*}=M^{s-1}(\ln\ln M)$. Moreover, for $p\in [1,\infty]$, we have
\begin{align}
\n{\lra{v}^{2}e^{\lra{v}^{2}}f_{\mathrm{r}}}_{L_{t}^{\infty}(0,T_{*};L_{v}^{1}L_{x}^{p})}\lesssim & N^{-1},\label{equ:z-norm,lv1,used}\\
M^{-1}\n{\lra{v}^{2}e^{\lra{v}^{2}}\nabla_{x}f_{\mathrm{r}}}_{L_{t}^{\infty}(0,T_{*};L_{v}^{1}L_{x}^{p})}
\lesssim& N^{-1}.\label{equ:z-norm,lv1,H1,used}
\end{align}
\end{lemma}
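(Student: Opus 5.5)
The estimates for $f_{\mathrm{r}}$ rest on the explicit formula \eqref{equ:fr}. Everything reduces to the pointwise bounds on $A[f_{\mathrm{b}}]$ from Lemma \ref{lemma:Hs,bounds on fb} and to the localization $\chi(100Mx)\chi(Nv)$. The factor $e^{-\nu t}$ in $f_{\mathrm{b}}$ does not matter, because of the uniform support condition \eqref{equ:uniform compact,fb}. Since $|v|\le N^{-1}$ on the support of $f_{\mathrm{r}}$, the weights $e^{\lra{v}^{2}}$ and $\lra{v}^{2}e^{\lra{v}^{2}}$ are comparable to the constant $e$ there. So every weighted norm equals the corresponding unweighted one up to absolute constants. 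This already gives the two-sided comparison in \eqref{equ:upper bound,fr,t=0}, once the $t=0$ norm is computed.

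\textbf{Step 1 ($t=0$).} I compute $\n{f_{\mathrm{r}}(0)}_{L_{v}^{2}H_{x}^{s_{0}}}$ directly. The $v$-factor gives $N^{3/2}\n{\chi(N\cdot)}_{L^{2}}\sim 1$. The $x$-factor satisfies $\n{\chi(100M\cdot)}_{\dot H^{s_{0}}}\sim M^{s_{0}-3/2}$, and for $M\gg1$ this dominates the $L^{2}$ part. Multiplying by $M^{3/2-s_{0}}$ gives $\sim 1$, which is \eqref{equ:critical norm estimate for fr,lower bound} and \eqref{equ:upper bound,fr,t=0}.

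\textbf{Step 2 (decay).} Write $f_{\mathrm{r}}(t)=f_{\mathrm{r}}(0)\,E(t,x,v)$ with $E=\exp[-\int_{0}^{t}A\,d\tau]$. On the support of $\chi(100Mx)$ we have $|x|\le \frac{1}{100M}$, so the lower bound \eqref{equ:upper lower bound,fr} gives $0\le E\le e^{-ctM^{1-s}}$. This immediately bounds the $L^{2}$ part. For the $H^{s_{0}}$ part I interpolate between $L^{2}$ and $H^{1}$, with $s_{0}<1$. The $x$-gradient produces two terms: $\nabla\chi(100Mx)$, which costs $M$, and $E\int_{0}^{t}\nabla_{x}A$. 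By \eqref{equ:upper bound,fr,k} with $k=1$, the second term is bounded by $tM^{2-s}E$. Then
\[
tM^{2-s}e^{-ctM^{1-s}}\lesssim M\, e^{-c'tM^{1-s}},
\]
so the derivative still costs only $M$ times the decaying exponential. Interpolation therefore gives
\[
\n{f_{\mathrm{r}}(t)}_{H^{s_{0}}}\lesssim M^{s_{0}-3/2}M^{3/2-s_{0}}e^{-c'tM^{1-s}},
\]
which is \eqref{equ:critical norm estimate for fr,upper bound} after renaming the constant. I expect this step to be the only delicate one, since one must make sure the derivative falling on the exponent does not destroy the decay. The absorption of the factor $tM^{1-s}$ into a slightly weaker exponential handles it. If a fractional-derivative version is needed, I would use the Leibniz rule with the $k=2$ bound. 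At $T_{*}=M^{s-1}\ln\ln M$ the decay factor becomes $e^{-c\ln\ln M}=(\ln M)^{-c}$. Adjusting $T_{*}$ by a constant factor (equivalently, absorbing $c$) gives \eqref{equ:lower bound,fr,T}.

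\textbf{Step 3 ($L_{v}^{1}L_{x}^{p}$ bounds).} Since $E\le 1$, we have $|f_{\mathrm{r}}|\le f_{\mathrm{r}}(0)$. The $L_{v}^{1}$ norm of $N^{3/2}\chi(Nv)$ is $N^{-3/2}$. The $L_{x}^{p}$ norm of $M^{3/2-s_{0}}\chi(100Mx)$ is at most $M^{3/2-s_{0}}$. Their product is at most $N^{-3/2}M^{3/2}\le N^{-1}$, because $N=M^{10}$. For the gradient, the previous step's bound $|\nabla_{x}f_{\mathrm{r}}|\lesssim (M+T_{*}M^{2-s})f_{\mathrm{r}}(0)$-type estimate applies, with $T_{*}M^{2-s}=M\ln\ln M$. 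After dividing by $M$, a harmless $\ln\ln M$ remains, and it is absorbed by the large spare power $N^{-1/2}$. This gives \eqref{equ:z-norm,lv1,used} and \eqref{equ:z-norm,lv1,H1,used}.
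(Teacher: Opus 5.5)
Your argument is correct and is essentially the direct computation that the paper outsources to \cite[Lemmas 2.4, 4.1]{CGSZ26}. The ingredients match: trivial weights on $|v|\le N^{-1}$, decay from \eqref{equ:upper lower bound,fr} on the support of $\chi(100Mx)$, the gradient via \eqref{equ:upper bound,fr,k} with $k=1$ plus $L^2$--$H^1$ interpolation, and the $N^{-1}$ gain from the $L^1_v$ integration ($N^{3/2}\cdot N^{-3}$) combined with $N=M^{10}$, which is exactly the paper's stated explanation.

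One small caveat concerns constants. A constant in the exponent cannot be ``renamed'' inside $\lesssim$, and $T_*$ is fixed by the statement, so you actually obtain $e^{-ctM^{1-s}}$ in \eqref{equ:critical norm estimate for fr,upper bound} and $(\ln M)^{-c}$ in \eqref{equ:lower bound,fr,T}, unless the constant in \eqref{equ:upper lower bound,fr} is large enough. This is harmless, since only smallness of $f_{\mathrm{r}}(T_{*})$ is used later, and the paper's statement glosses over the same constant.
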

\begin{proof}
The estimates \eqref{equ:critical norm estimate for fr,upper
bound}--\eqref{equ:lower bound,fr,T} are direct consequences of \cite[Lemma
2.4]{CGSZ26}, while \eqref{equ:z-norm,lv1,used} and
\eqref{equ:z-norm,lv1,H1,used} are obtained by adapting the proof of
\cite[Lemma 4.1]{CGSZ26}. In the latter, the vanishing smallness factor
$N^{-1}$ stems from the $L^{1}_{v}$ integration in the velocity variable,
combined with the parameter choice $N=M^{10}$. These two estimates
will be used in the subsequent error analysis.
\end{proof}

Based on the above Sobolev norm estimates for $f_{\mathrm{b}}(t)$ and $f_{\mathrm{r}}(t)$, we are ready to conclude the norm deflation property for the approximate solution $f_{\mathrm{a}}(t)$.
\begin{proposition}[Norm deflation property for $f_{\mathrm{a}}$]\label{lemma:norm deflation of fa}
Let $T_{*}= M^{s-1}(\ln\ln M)$. Then, we have
\begin{align}
	\n{e^{\lra{v}^{2}}f_{\mathrm{a}}(0)}_{L_v^{2} H_x^{s_{0}}}\sim&  1,\label{equ:critical norm estimate for fa,t=0}\\
	\n{e^{\lra{v}^{2}}f_{\mathrm{a}}(T_{*})}_{L_v^{2} H_x^{s_{0}}}\lesssim& \frac{1}{\ln M}.\label{equ:critical norm estimate for fa,t=T}
\end{align}
\end{proposition}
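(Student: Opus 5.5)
The plan is to write $f_{\mathrm{a}}=f_{\mathrm{r}}+f_{\mathrm{b}}$ and apply the triangle inequality at $t=0$ and $t=T_*$. Every needed bound is already in Lemma \ref{lemma:Hs,bounds on fb} and Lemma \ref{lemma:bounds on fr}. The only work is to check the time range and that the parameter choice makes $f_{\mathrm{b}}$ negligible.

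\emph{Step 1 (time $t=0$).} By \eqref{equ:upper bound,fr,t=0}, $\n{e^{\lra{v}^{2}}f_{\mathrm{r}}(0)}_{L_v^2H_x^{s_0}}\sim 1$. By \eqref{equ:critical norm estimate for fb}, $\n{e^{\lra{v}^{2}}f_{\mathrm{b}}(0)}_{L_v^2H_x^{s_0}}\lesssim M^{s_0-s}$. Since \eqref{equ:condition,s,s0} gives $s>s_0$, we have $M^{s_0-s}\to0$ as $M\to\infty$. The triangle inequality in both directions then yields
\[
1-CM^{s_0-s}\lesssim \n{e^{\lra{v}^{2}}f_{\mathrm{a}}(0)}_{L_v^2H_x^{s_0}}\lesssim 1+CM^{s_0-s}.
\]
For $M\gg1$ this gives \eqref{equ:critical norm estimate for fa,t=0}.

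\emph{Step 2 (time $t=T_*$).} First check that $T_*\in[0,\tfrac14]$, so that both lemmas apply. Since $s<1$, we have $T_*=M^{s-1}\ln\ln M\to0$, hence $T_*\le\frac14$ for $M$ large. Next, \eqref{equ:lower bound,fr,T} gives $\n{e^{\lra{v}^{2}}f_{\mathrm{r}}(T_*)}_{L_v^2H_x^{s_0}}\lesssim (\ln M)^{-1}$. Equivalently, \eqref{equ:critical norm estimate for fr,upper bound} gives the bound $\exp[-T_*M^{1-s}]=\exp[-\ln\ln M]=(\ln M)^{-1}$.

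For $f_{\mathrm{b}}$, \eqref{equ:critical norm estimate for fb} gives $\lesssim M^{s_0-s}$. Since $s-s_0>0$, this power is much smaller than $(\ln M)^{-1}$ for $M$ large. Summing the two bounds gives \eqref{equ:critical norm estimate for fa,t=T}.

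\emph{Main point to verify.} There is no real obstacle. The only subtlety is that the estimate for $f_{\mathrm{b}}$ in Lemma \ref{lemma:Hs,bounds on fb} must hold with the extra damping factor $e^{-\nu t}$. This has already been justified before the lemmas via the support condition \eqref{equ:uniform compact,fb}, which gives $e^{-\nu t}\sim1$ for $t\le\frac14$. I would note this explicitly, together with the fact that the $\ln M$ rate at $T_*$ comes exactly from the choice \eqref{equ:T*}.
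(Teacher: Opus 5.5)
Your proof is correct and follows the paper's argument: split $f_{\mathrm{a}}=f_{\mathrm{r}}+f_{\mathrm{b}}$ and insert the bounds from Lemmas \ref{lemma:Hs,bounds on fb} and \ref{lemma:bounds on fr}. The only difference is how the two pieces are combined. The paper uses their disjoint velocity supports ($|v|\le N^{-1}$ versus $\frac34\le|v|\le\frac54$), which gives $\n{e^{\lra{v}^{2}}f_{\mathrm{a}}}^{2}_{L_v^2H_x^{s_0}}\sim\n{e^{\lra{v}^{2}}f_{\mathrm{r}}}^{2}_{L_v^2H_x^{s_0}}+\n{e^{\lra{v}^{2}}f_{\mathrm{b}}}^{2}_{L_v^2H_x^{s_0}}$, so the lower bound at $t=0$ needs no smallness of $f_{\mathrm{b}}$; your triangle inequality instead absorbs $M^{s_0-s}$ for $M$ large, which works equally well.
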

\begin{proof}
Since $f_{\mathrm{b}}$ and $f_{\mathrm{r}}$ have disjoint velocity supports, the norm of their sum satisfies
\begin{align*}
\n{e^{\lra{v}^{2}}f_{\mathrm{a}}(t)}_{L_v^{2} H_x^{s_{0}}}^{2}\sim \n{e^{\lra{v}^{2}}f_{\mathrm{r}}(t)}_{L_v^{2} H_x^{s_{0}}}^{2}+\n{e^{\lra{v}^{2}}f_{\mathrm{b}}(t)}_{L_v^{2} H_x^{s_{0}}}^{2}.
\end{align*}
The estimate for $f_{\mathrm{b}}$ is provided by Lemma \ref{lemma:Hs,bounds on fb}, specifically
\begin{align*}
\sup_{t\in[0,T_{*}]}\n{e^{\lra{v}^{2}}f_{\mathrm{b}}(t,x,v)}_{L_{v}^{2}H_{x}^{s_{0}}}\lesssim M^{s_{0}-s}\lesssim \frac{1}{\ln M}.
\end{align*}
For $f_{\mathrm{r}}$, from Lemma \ref{lemma:bounds on fr} we have
\begin{align*}
\n{e^{\lra{v}^{2}}f_{\mathrm{r}}(0)}_{L_{v}^{2}H_{x}^{s_{0}}}\sim & \n{f_{\mathrm{r}}(0)}_{L_{v}^{2}H_{x}^{s_{0}}}\sim 1,\\
\n{e^{\lra{v}^{2}}f_{\mathrm{r}}(T_{*})}_{L_{v}^{2}H_{x}^{s_{0}}}\lesssim & \frac{1}{\ln M}.
\end{align*}
Combining these above bounds, we directly obtain \eqref{equ:critical norm estimate for fa,t=0} and \eqref{equ:critical norm estimate for fa,t=T}.
\end{proof}

\subsection{$Z$-norm Estimates for Error Terms}\label{section:Estimates for Error Terms}
A key step toward perturbing the approximate solution $f_{\mathrm{a}}$ to an exact solution of the Boltzmann equation \eqref{equ:perturbation,Boltzmann,f} is  to develop a well-posedness theory for the correction equation in a suitable space, which takes the form
\begin{align}\label{equ:correction term,fc}
\pa_{t}f_{\mathrm{c}} + v\cdot \nabla_{x} f_{\mathrm{c}}+\nu f_{\mathrm{c}}
=&\pm Q^\pm(f_{\mathrm{c}},f_{ \mathrm{a}}) \pm
Q^\pm(f_{\mathrm{a}},f_{\mathrm{c}}) \pm Q^\pm(f_{\mathrm{c}},f_{\mathrm{c}
})+ K_{\mu}f_{\mathrm{c}}-F_{\text{err}}^{(1)}-F_{\text{err}}^{(2)},
\end{align}
where the error terms are given by
\begin{align}\label{equ:Ferr,error term estimate}
F_{\mathrm{err}}^{(1)}=&\pa_{t}f_{\mathrm{a}}+v\cdot \nabla_{x}f_{\mathrm{a}}+Q^{-}(f_{\mathrm{a}},f_{\mathrm{a}})-Q^{+}(f_{\mathrm{a}},f_{\mathrm{a}})+\nu f_{\mathrm{b}}\\
=&v\cdot \nabla
_{x}\,f_{\mathrm{r}}-Q^{+}(f_{\mathrm{r}},f_{\mathrm{b}})\mp Q^{\pm}(f_{\mathrm{b}},f_{\mathrm{r}})\mp Q^{\pm}(f_{\mathrm{r}},f_{
\mathrm{r}})\mp Q^{\pm}(f_{\mathrm{b}},f_{\mathrm{b}}),\notag\\
F_{\mathrm{err}}^{(2)}=&\nu f_{\mathrm{r}}-K_{\mu}f_{\mathrm{r}}-K_{\mu}f_{\mathrm{b}}\\
=&\mp Q^{\pm}(f_{\mathrm{r}},\mu)\pm Q^{\pm}(\mu,f_{\mathrm{r}})-Q^{+}(f_{\mathrm{b}},\mu)-Q^{+}(\mu,f_{\mathrm{b}})+Q^{-}(\mu,f_{\mathrm{b}})\notag.
\end{align}

Specifically, we introduce the $Z$-norm
\begin{align}\label{equ:Z-norm,error term estimate}
\n{f}_{Z}=M^{s_{0}}\n{f}_{L_{v}^{2,1}L_{x}^{2}}+M^{s_{0}-1}\n{\nabla_{x}f}_{L_{v}^{2,1}L_{x}^{2}}
+\n{f}_{L_{v}^{1,1}L_{x}^{\infty}}+
M^{-1}\n{\nabla_{x}f}_{L_{v}^{1,1}L_{x}^{\infty}}
\end{align}
with $\n{f}_{L_{v}^{p,1}}=\n{\lra{v}f}_{L_{v}^{p}}$, which provides the working space for the perturbation argument in Section \ref{section:Z-norm Estimates for Correction Terms}.

This section is devoted to deriving the $Z$-norm estimates for the error terms $F_{\mathrm{err}}^{(1)}$ and
$F_{\mathrm{err}}^{(2)}$.
The main result of this section is summarized in the following proposition.
\begin{proposition}[$Z$-norm estimates for $F_{\mathrm{err}}^{(1)}$ and
$F_{\mathrm{err}}^{(2)}$]\label{lemma:bounds on ferr}
 For $0\leq  t_{0}\leq t \leq T_{*}$, we have
\begin{equation}\label{equ:Ferr_bound,1}
\bbn{e^{\lra{v}^{2}}\int_{t_{0}}^{t}S_{\nu}(t-\tau)F_{\text{err}}^{(1)}(\tau)\,d\tau}_{Z}\lesssim M^{-\delta},
\end{equation}
\begin{equation}\label{equ:Ferr_bound,2}
\bbn{e^{\lra{v}^{2}}\int_{t_{0}}^{t}S_{\nu}(t-\tau)F_{\text{err}}^{(2)}(\tau)\,d\tau}_{Z}\lesssim M^{-\delta},
\end{equation}
for some $\delta>0$.
\end{proposition}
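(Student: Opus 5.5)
We reduce both estimates to bounds on the individual error terms themselves, and pay the time integration with the short time scale $T_{*}=M^{s-1}\ln\ln M$. The damped transport operator $S_{\nu}(t)=e^{-t\nu}e^{-tv\cdot\nabla_{x}}$ has two properties we need. It commutes with $\nabla_{x}$. It is a contraction in every norm appearing in the $Z$-norm, because $\nu\geq0$ and $L_{x}^{2}$, $L_{x}^{\infty}$ are invariant under $x\mapsto x-tv$ for fixed $v$. Since the weight $e^{\lra{v}^{2}}$ depends only on $v$, it also commutes with $S_{\nu}$. Minkowski's inequality then gives
\begin{align*}
\bbn{e^{\lra{v}^{2}}\int_{t_{0}}^{t}S_{\nu}(t-\tau)F(\tau)\,d\tau}_{Z}\leq T_{*}\sup_{\tau\in[0,T_{*}]}\n{e^{\lra{v}^{2}}F(\tau)}_{Z}.
\end{align*}
It therefore suffices to show that every term of $F_{\mathrm{err}}^{(1)}$ and $F_{\mathrm{err}}^{(2)}$ satisfies $T_{*}\n{e^{\lra{v}^{2}}F}_{Z}\lesssim M^{-\delta}$. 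The gain from $T_{*}\lesssim M^{s-1+}$ is essential here. There is one exception, the transport term $v\cdot\nabla_{x}f_{\mathrm{r}}$. For it we avoid the time factor altogether: it lives at velocities $|v|\lesssim N^{-1}$, and by \eqref{equ:z-norm,lv1,used}--\eqref{equ:z-norm,lv1,H1,used} its weighted $L_{v}^{1}$ norms carry the factor $N^{-1}|v|\lesssim N^{-2}$ against a loss of at most $M^{2}$. Its $L_{v}^{2}$ parts are handled by the explicit formula \eqref{equ:fr} together with the derivative bounds \eqref{equ:upper bound,fr,k}; here $|v|\leq N^{-1}$ kills the $M$-growth coming from $\nabla_{x}$, since $N=M^{10}$.

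The terms involving $f_{\mathrm{r}}$ ($Q^{\pm}(f_{\mathrm{r}},f_{\mathrm{r}})$, $Q^{\pm}(f_{\mathrm{b}},f_{\mathrm{r}})$, $Q^{+}(f_{\mathrm{r}},f_{\mathrm{b}})$, and the $\mu$-interactions with $f_{\mathrm{r}}$ in $F_{\mathrm{err}}^{(2)}$) are handled in two steps.
\begin{enumerate}
\item We first use the standard bilinear bounds for the hard-sphere gain and loss operators in Gaussian-weighted $L_{v}^{p}$ spaces. These rest on energy conservation $|v^{*}|^{2}+|u^{*}|^{2}=|v|^{2}+|u|^{2}$, which transfers the weight $e^{\lra{v}^{2}}$ onto the two inputs. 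They give bounds of the schematic form $\n{e^{\lra{v}^{2}}Q^{\pm}(f,g)}_{L_{v}^{p,1}}\lesssim \n{\lra{v}^{2}e^{\lra{v}^{2}}f}_{L_{v}^{1}}\n{e^{\lra{v}^{2}}g}_{L_{v}^{p,2}}$, together with the symmetric version.
\item We then place $f_{\mathrm{r}}$ in the $L_{v}^{1}$ slot. By \eqref{equ:z-norm,lv1,used}--\eqref{equ:z-norm,lv1,H1,used} this gains $N^{-1}=M^{-10}$, which absorbs every polynomial loss in $M$ produced by the $Z$-norm weights, the $x$-derivatives, or the $M^{1-s}$ size of $f_{\mathrm{b}}$ from Lemma \ref{lemma:Hs,bounds on fb}. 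For $\mu$ we simply use $\n{\lra{v}^{k}e^{\lra{v}^{2}}\mu}_{L_{v}^{p}}<\infty$, since $\mu=e^{-2|v|^{2}}$.
\end{enumerate}
When the loss term $Q^{-}(\mu,f_{\mathrm{r}})=\mu A[f_{\mathrm{r}}]$ is placed in the $L_{x}^{2}$ slot, we need $f_{\mathrm{r}}$ in $L_{x}^{2}L_{v}^{1}$. This is again small, by the $L_{v}^{1}$ bound with $p=2$ in \eqref{equ:z-norm,lv1,used}.

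The genuinely delicate terms contain no $f_{\mathrm{r}}$: they are $Q^{\pm}(f_{\mathrm{b}},f_{\mathrm{b}})$ and $-Q^{+}(f_{\mathrm{b}},\mu)-Q^{+}(\mu,f_{\mathrm{b}})+Q^{-}(\mu,f_{\mathrm{b}})$. The dangerous term $Q^{-}(f_{\mathrm{b}},\mu)=\nu f_{\mathrm{b}}$ has been removed by construction, but what remains must be estimated using the structure.
\begin{enumerate}
\item For $Q^{-}(\mu,f_{\mathrm{b}})=\mu A[f_{\mathrm{b}}]$ we use the pointwise bound \eqref{equ:upper bound,Afb}, $A[f_{\mathrm{b}}]\lesssim M^{1-s}(M|x|+1)^{-2}$. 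In $L_{x}^{2}$ this profile has norm $M^{1-s}\cdot M^{-1}\cdot(\text{log})$, up to the $L_{x}^{2}$ norm of a decaying function in $\R^{3}$, which is comparable to $M^{-s-1/2}$ as in \eqref{equ:sobolev norm,fb,L2L1}. Multiplying by $M^{s_{0}}$ gives $M^{s_{0}-s-1/2}$, and the $L_{x}^{\infty}$ part gives $M^{1-s}$. The time factor $T_{*}$ then yields $\ln\ln M\cdot M^{0}$ in the worst case. This is exactly borderline, so here we must use that $A[f_{\mathrm{b}}]$ is an $L_{u}^{1}$ average. Concretely, we would bound the $L_{x}^{\infty}$ norm by $\n{f_{\mathrm{b}}}_{L_{x}^{\infty}L_{v}^{1}}$ rather than by $L_{v}^{1}L_{x}^{\infty}$. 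The tubes $K_{i,j}$ are disjoint away from a neighborhood of the origin, and each $I_{i,j}$ has $v$-volume about $M^{-2}$. Hence $\n{f_{\mathrm{b}}}_{L_{v}^{1}}\lesssim M^{1-s}M^{-2}\sum_{i,j}K_{i,j}$, and by \eqref{equ:Kij,sum} this equals $M^{1-s}(M|x|+1)^{-2}$. So the sup over $x$ is attained only in a ball of radius $M^{-1}$, which the transport $x-tv$ with $|v|\sim1$ leaves within time $\sim M^{-1}\ll T_{*}$. The corresponding $L^{1}_{t}$ integral of the sup after transport is therefore of size $M^{1-s}\cdot M^{-1}$, which is small.
\item For the gain terms $Q^{+}(f_{\mathrm{b}},\mu)$, $Q^{+}(\mu,f_{\mathrm{b}})$ and for $Q^{\pm}(f_{\mathrm{b}},f_{\mathrm{b}})$ we use the same mechanism. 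The output is an $L_{u}^{1}$ average of $f_{\mathrm{b}}$ over angular directions, so the bounds \eqref{equ:sobolev norm,fb,L2L1}, namely $L_{x}^{2}L_{v}^{1}\lesssim M^{-1/2-s}$, and the pointwise $(M|x|+1)^{-2}$ profile apply. In addition, the time integral of $S_{\nu}$ applied to a source concentrated near $x=0$ at $|v|\sim1$ disperses it out of the concentration region.
\end{enumerate}

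I expect this last step to be the main obstacle: the $L_{x}^{\infty}$ components of the $Z$-norm for the $f_{\mathrm{b}}$–$\mu$ and $f_{\mathrm{b}}$–$f_{\mathrm{b}}$ interactions. There the naive bound is only $T_{*}M^{1-s}\sim\ln\ln M$, and smallness must come from the transport dispersing the $O(M^{-1})$-concentrated core within a time $\ll T_{*}$. I would first try to verify this precisely by writing the Duhamel integral explicitly along characteristics and integrating the profile $(M|x-(t-\tau)v|+1)^{-2}$ in $\tau$. For the $f_{\mathrm{b}}$–$f_{\mathrm{b}}$ terms one must also control the output velocities, which may be small. For these I would exploit the weight $\lra{v}$ and the fact that the gain term preserves the $L_{v}^{1}$ average bounds.
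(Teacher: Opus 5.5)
Your architecture is the paper's. Every term whose weighted $Z$-norm is $O(1)$ or $N^{-1}$-small is paid for by the factor $T_{*}$. This covers the $f_{\mathrm{r}}$-interactions and the $L^{2,1}_{v}L^{2}_{x}$ components of the $f_{\mathrm{b}}$-terms, via $\n{e^{\lra{v}^{2}}f_{\mathrm{b}}}_{L^{2}_{x}L^{1}_{v}}\lesssim M^{-\frac12-s}$. The $L^{1,1}_{v}L^{\infty}_{x}$ components of the $f_{\mathrm{b}}$--$\mu$ terms are borderline, of size $T_{*}M^{1-s}$, and their smallness comes from the characteristic leaving the $O(M^{-1})$ core. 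For $Q^{-}(\mu,f_{\mathrm{b}})$ your plan is exactly the paper's: insert \eqref{equ:upper bound,Afb} and use $\int_{0}^{t}(M|x-v(t-\tau)|+1)^{-2}d\tau\lesssim (M|v|)^{-1}$. Note that the paper does not reprove \eqref{equ:Ferr_bound,1} at all; it quotes \cite[Proposition 4.1]{CGSZ26}.

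The real difference is in the gain terms $Q^{+}(f_{\mathrm{b}},\mu)=Q^{+}(\mu,f_{\mathrm{b}})$. The paper never bounds $Q^{+}$ pointwise. It writes the Duhamel integral explicitly and changes collision variables so that the transverse drift becomes $P^{\perp}_{e_{i,j}}P_{\omega}(v-u)$. It then applies \eqref{equ:chi,inequality} and disposes of the degenerate angles $\sin\phi_{i,j}(\omega)\le M^{-1}$ and $|\cos\theta(u-v,\omega)|\le M^{-1/2}$ by an angular decomposition, ending with $M^{\frac12-s}$. You instead want a spatial profile for $Q^{+}$ itself, followed by a rerun of the loss-term computation. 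That route is viable, more unified, and would even give $M^{-s}\ln M$. But it needs an input you have not supplied, and the reason you give for it is wrong.

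Being an $L^{1}_{u}$-average gives no pointwise control in $v$. By Carleman's representation, $Q^{+}(F,e^{-|\cdot|^{2}})(v)\lesssim\int_{\R^{3}}|F(v')||v-v'|^{-1}dv'$, which is not bounded by $\n{F}_{L^{1}_{v}}$. Yet pointwise-in-$v$ control is exactly what the characteristic integration needs, because $\sup_{x}$ sits inside $\int dv$ in $L^{1,1}_{v}L^{\infty}_{x}$.

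What rescues you is the needle geometry of the $I_{i,j}$:
\begin{enumerate}
\item $\sup_{v}\int I_{i,j}(v')|v-v'|^{-1}dv'\lesssim M^{-2}\ln M$.
\item For $\tau\le T_{*}$ and $v'\in\operatorname{Supp}I_{i,j}$, one has $|P^{\perp}_{e_{i,j}}y|\le 2M^{-1}$ on the support of $K_{i,j}(y-v'\tau)$. Hence by \eqref{equ:gap estimate,eij}, $\lesssim M^{2}(M|y|+1)^{-2}$ tubes contain $y$.
\end{enumerate}
After moving $e^{\lra{v}^{2}}$ onto the inputs by energy conservation, this gives, uniformly in $\tau$,
$e^{\lra{v}^{2}}Q^{+}(f_{\mathrm{b}},\mu)(\tau,y,v)\lesssim M^{1-s}\ln M\,e^{-c|v|^{2}}(M|y|+1)^{-2}$.
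The loss-term argument then closes.

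Three smaller corrections are needed.
\begin{enumerate}
\item The output velocities of $Q^{-}(\mu,f_{\mathrm{b}})$ and of the gain terms are not of size $1$, and the weight $\lra{v}$ does not help at small $|v|$. What closes the estimate is that $(M|v|)^{-1}$ is locally integrable in $\R^{3}$.
\item For $Q^{\pm}(f_{\mathrm{b}},f_{\mathrm{b}})$ the naive bound is $T_{*}M^{2-2s}=M^{1-s}\ln\ln M$, not $\ln\ln M$. Dispersion lowers it to about $M^{1-2s}$, which suffices since $s>\frac34$. For $Q^{+}$ you must also count the plane sections of the needles that appear in Carleman's formula.
\item In $Q^{-}(f_{\mathrm{r}},\mu)=\nu f_{\mathrm{r}}$, the function $f_{\mathrm{r}}$ sits in the multiplier slot, so its $L^{2,1}_{v}L^{2}_{x}$ part has no $N^{-1}$ gain. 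As in the paper, it is handled by $T_{*}\n{e^{\lra{v}^{2}}\nu f_{\mathrm{r}}}_{Z}\lesssim T_{*}$.
\end{enumerate}
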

\begin{proof}
As for \eqref{equ:Ferr_bound,1}, the
$Z$-norm estimates in the case of the free transport operator $S(t)$ have already been derived
in \cite[Proposition 4.1]{CGSZ26}, and the corresponding arguments apply equally to the damped transport operator $S_{\nu}(t)$. Consequently, it remains to complete the proof of \eqref{equ:Ferr_bound,2}. This will be achieved in Lemmas \ref{lemma:Z-norm estimate,fr,mu}, \ref{lemma:Z-norm estimate,Q-,fb,mu}, and \ref{lemma:Z-norm estimate,Q+,fb,mu}, respectively.
\end{proof}

\subsubsection{Analysis of $Q^{\pm}(f_{\mathrm{r}},\mu)$, and $Q^{\pm}(\mu,f_{\mathrm{r}})$}\label{section:
Analysis,Q,fr,mu}
To begin with, we collect the collision estimates with respect to the $Z$-norm in the following Lemma \ref{lemma:weighted,Z-norm,estimates}
\begin{lemma}\label{lemma:weighted,Z-norm,estimates}
There holds that
\begin{align}
\n{e^{\lra{v}^{2}}Q^{+}(f,\mu)}_{L_{v}^{2,1}L_{x}^{2}}+
\n{e^{\lra{v}^{2}}Q^{+}(\mu,f)}_{L_{v}^{2,1}L_{x}^{2}}
\lesssim& \n{\lra{v}^{2}e^{\lra{v}^{2}}f}_{L_{x}^{2}L_{v}^{1}},\label{equ:Q+,sym,f,mu,Lv2Lx2}\\
\n{e^{\lra{v}^{2}}Q^{+}(f,\mu)}_{L_{v}^{1,1}L_{x}^{\infty}}+
\n{e^{\lra{v}^{2}}Q^{+}(\mu,f)}_{L_{v}^{1,1}L_{x}^{\infty}}\lesssim&
\n{\lra{v}^{2}e^{\lra{v}^{2}}f}_{L_{v}^{1}L_{x}^{\infty}},\label{equ:Q+,sym,f,mu,Lv1Lxinf}\\
\n{e^{\lra{v}^{2}}Q^{-}(\mu,f)}_{L_{v}^{2,1}L_{x}^{2}}
\lesssim& \n{\lra{v}^{2}e^{\lra{v}^{2}}f}_{L_{x}^{2}L_{v}^{1}},\label{equ:Q-,f,mu,Lv2Lx2}\\
\n{e^{\lra{v}^{2}}Q^{-}(\mu,f)}_{L_{v}^{1,1}L_{x}^{\infty}}
\lesssim& \n{\lra{v}^{2}e^{\lra{v}^{2}}f}_{L_{v}^{1}L_{x}^{\infty}}.\label{equ:Q-,f,mu,Lv1Lxinf}
\end{align}
In particular, for the $Z$-norm, we have
\begin{align}\label{equ:Z-norm,estimates,mu,f}
&\n{e^{\lra{v}^{2}}Q^{+}(\mu,f)}_{Z}+
\n{e^{\lra{v}^{2}}Q^{+}(f,\mu)}_{Z}+\n{e^{\lra{v}^{2}}Q^{-}(\mu,f)}_{Z}\\
\lesssim& M^{s_{0}}\n{\lra{v}^{2}e^{\lra{v}^{2}}f}_{L_{x}^{2}L_{v}^{1}}
+M^{s_{0}-1}\n{\lra{v}^{2}e^{\lra{v}^{2}}\nabla_{x}f}_{L_{x}^{2}L_{v}^{1}}\notag\\
&+\n{\lra{v}^{2}e^{\lra{v}^{2}}f}_{L_{v}^{1}L_{x}^{\infty}}+
M^{-1}\n{\lra{v}^{2}e^{\lra{v}^{2}}\nabla_{x}f}_{L_{v}^{1}L_{x}^{\infty}}.\notag
\end{align}
\end{lemma}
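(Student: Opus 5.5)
The plan is to exploit conservation of energy in the collision, so that the Gaussian weight $e^{\lra{v}^{2}}$ is carried by $f$, and then to control the gain terms by the classical kernel representation of $Q^{+}(\cdot,\text{Gaussian})$.

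\textbf{Gain terms: moving the weight.} Write $F:=e^{\lra{v}^{2}}|f|$. Since $|v|^{2}+|u|^{2}=|v^{*}|^{2}+|u^{*}|^{2}$, we have
\begin{align*}
e^{\lra{v}^{2}}\le e^{1+|v^{*}|^{2}+|u^{*}|^{2}}\lesssim e^{\lra{v^{*}}^{2}}e^{|u^{*}|^{2}}.
\end{align*}
Combined with $\mu(u^{*})=e^{-2|u^{*}|^{2}}$, this gives the pointwise bound
\begin{align*}
e^{\lra{v}^{2}}|Q^{+}(f,\mu)|\lesssim Q^{+}(F,\mu^{\frac12}).
\end{align*}
The same bound holds for $Q^{+}(\mu,f)$ after exchanging the roles of $v^{*}$ and $u^{*}$, i.e. replacing $\omega$ by a unit vector orthogonal to it; this is the standard symmetry of the hard-sphere gain term.

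\textbf{Gain terms: kernel estimate.} Next we write the linear map $F\mapsto Q^{+}(F,\mu^{\frac12})$ in Carleman/Grad form,
\begin{align*}
Q^{+}(F,\mu^{\frac12})(v)=\int_{\R^{3}}k(v,v')F(v')\,dv'.
\end{align*}
For the hard-sphere kernel the standard bound is
\begin{align*}
|k(v,v')|\lesssim |v-v'|^{-1}\exp\Big(-c|v-v'|^{2}-c\frac{(|v|^{2}-|v'|^{2})^{2}}{|v-v'|^{2}}\Big),
\end{align*}
possibly up to a harmless extra polynomial factor in $|v-v'|$. The key kernel bounds we aim for are
\begin{align*}
\n{\lra{v}k(\cdot,v')}_{L_{v}^{2}}+\n{\lra{v}k(\cdot,v')}_{L_{v}^{1}}\lesssim \lra{v'}^{2}.
\end{align*}
They follow from $\lra{v}\lesssim\lra{v'}+|v-v'|$, the Gaussian decay in $|v-v'|$, and the local square integrability of $|v-v'|^{-1}$ in three dimensions. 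A polynomial loss in $\lra{v'}$ is affordable here: the right-hand sides of the lemma carry the weight $\lra{v}^{2}$, so we do not need to use the anisotropic exponential factor, which would only improve the loss.

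\textbf{Mixed norms.} Since $x$ enters $Q^{\pm}$ only as a parameter, the mixed norms are handled in the order dictated by the right-hand sides.
\begin{itemize}
\item For \eqref{equ:Q+,sym,f,mu,Lv2Lx2}, fix $x$ and apply Minkowski's inequality in $v'$:
\begin{align*}
\n{\lra{v}\textstyle\int k(\cdot,v')F(x,v')\,dv'}_{L_{v}^{2}}\lesssim \n{\lra{v}^{2}F(x)}_{L_{v}^{1}}.
\end{align*}
Then take $L_{x}^{2}$ and use $L_{v}^{2}L_{x}^{2}=L_{x}^{2}L_{v}^{2}$.
\item For \eqref{equ:Q+,sym,f,mu,Lv1Lxinf}, bound $\sup_{x}$ of the integrand by $k(v,v')\sup_{x}F(x,v')$, integrate first in $v$ with weight $\lra{v}$ using the $L^{1}$ kernel bound, and then integrate in $v'$.
\end{itemize}

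\textbf{Loss term.} For $Q^{-}(\mu,f)=\mu(v)\int\!\!\int f(u)|(u-v)\cdot\omega|\,du\,d\omega$ the argument is elementary. We have $e^{\lra{v}^{2}}\mu(v)\lra{v}\lesssim e^{-|v|^{2}}\lra{v}$ and $|u-v|\le\lra{u}\lra{v}$, so pointwise
\begin{align*}
e^{\lra{v}^{2}}\lra{v}|Q^{-}(\mu,f)|\lesssim e^{-|v|^{2}}\lra{v}^{2}\int\lra{u}|f(x,u)|\,du\le e^{-|v|^{2}}\lra{v}^{2}\n{\lra{u}^{2}e^{\lra{u}^{2}}f(x)}_{L_{u}^{1}}.
\end{align*}
Taking $L_{v}^{2}L_{x}^{2}$, respectively $L_{v}^{1}L_{x}^{\infty}$, gives \eqref{equ:Q-,f,mu,Lv2Lx2} and \eqref{equ:Q-,f,mu,Lv1Lxinf}.

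\textbf{$Z$-norm.} Because $\nabla_{x}$ commutes with $Q^{\pm}(\mu,\cdot)$ and $Q^{+}(\cdot,\mu)$, the estimates above apply verbatim to $\nabla_{x}f$. Multiplying by the corresponding powers of $M$ yields \eqref{equ:Z-norm,estimates,mu,f}.

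The main obstacle is the kernel step: justifying the pointwise bound on $k(v,v')$ for hard spheres with Gaussian partner $\mu^{\frac12}$, and checking that the $\lra{v}$ weight costs at most $\lra{v'}^{2}$. I would take Grad's classical computation as given and verify only the weighted $L^{1}$ and $L^{2}$ integrals in $v$.
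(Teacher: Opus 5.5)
Your weight transfer via energy conservation, the hard-sphere symmetry $Q^{+}(\mu,f)=Q^{+}(f,\mu)$, the loss-term bound, the mixed-norm bookkeeping and the $Z$-norm step are all fine. There is, however, a genuine gap in the kernel step you flagged.

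Grad's estimate $|k(v,v')|\lesssim |v-v'|^{-1}\exp\big(-c|v-v'|^{2}-c(|v|^{2}-|v'|^{2})^{2}/|v-v'|^{2}\big)$ is a statement about the conjugated operator $h\mapsto \mu^{-\frac12}Q^{+}(\mu^{\frac12}h,\mu)$. The integrand of that operator carries the product $\mu^{\frac12}(u)\mu^{\frac12}(u^{*})$. The factor $\mu^{\frac12}(u)=e^{-|u|^{2}}$ is exactly what you discarded in your first step, when you bounded $e^{1+|v|^{2}}=e^{1+|v^{*}|^{2}+|u^{*}|^{2}}e^{-|u|^{2}}$ by $e^{1+|v^{*}|^{2}+|u^{*}|^{2}}$.

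For the operator $F\mapsto Q^{+}(F,\mu^{\frac12})$ that you actually wrote, Carleman's representation gives the exact kernel
\[
k(v,v')=\frac{2\pi}{|v-v'|}\exp\Big(-\frac{(v\cdot(v'-v))^{2}}{|v-v'|^{2}}\Big).
\]
This kernel has no decay in $|v-v'|$ at all. It equals $2\pi/|v-v'|$ on the whole sphere $\{v:\ v\cdot(v'-v)=0\}$ with diameter $[0,v']$; for example $k(0,v')=2\pi/|v'|$ (a fast particle hitting a thermal one head-on comes to rest). So your derivation from ``$\lra{v}\lesssim\lra{v'}+|v-v'|$, Gaussian decay in $|v-v'|$, local integrability'' does not go through. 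Since you explicitly drop the anisotropic factor, you are left with $\lra{v}|v-v'|^{-1}$, which is not integrable at infinity.

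The target kernel bounds are nevertheless true, so the repair is short.

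\textbf{Repair 1 (discard nothing).} Since $e^{\lra{v}^{2}}=e\,\mu^{-\frac12}(v)$ exactly, we have $e^{\lra{v}^{2}}Q^{+}(f,\mu)=\mu^{-\frac12}Q^{+}\big(\mu^{\frac12}h,\mu\big)$ with $h=e^{\lra{\cdot}^{2}}f$. This is literally Grad's operator, so his pointwise bound applies, and your integrals then close with a loss of only $\lra{v'}$.

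\textbf{Repair 2 (keep your first step).} Use the exact kernel above. Writing $v=v'+r\sigma$, the exponent becomes $-(r+v'\cdot\sigma)^{2}$. Up to Gaussian tails this confines $|r+v'\cdot\sigma|\lesssim 1$, so effectively $r\lesssim\lra{v'}$ and $\lra{v'+r\sigma}\lesssim\lra{v'}$. Hence
\[
\int\lra{v}k(v,v')\,dv=2\pi\int_{\mathbb{S}^{2}}\int_{0}^{\infty}r\lra{v'+r\sigma}e^{-(r+v'\cdot\sigma)^{2}}dr\,d\sigma\lesssim\lra{v'}^{2},
\qquad \n{\lra{v}k(\cdot,v')}_{L_{v}^{2}}\lesssim\lra{v'}.
\]
The $L^{1}$ loss $\lra{v'}^{2}$ is sharp for this kernel, so the polynomial room you reserved is genuinely needed on this route.

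For comparison, the paper avoids kernels entirely. It distributes the Gaussian weight onto both slots and quotes the gain-term bounds $\n{Q^{+}(f,g)}_{L_{v}^{r}}\lesssim\n{f}_{L_{v}^{1,1}}\n{g}_{L_{v}^{r,1}}$ (and the symmetric one), $r\in[1,3)$, from Alonso--Carneiro--Gamba and Mouhot--Villani, applied with $r=2$ and $r=1$.
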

\begin{proof}
We first provide the following collision estimates for hard-sphere case
\begin{align}
\n{Q^{-}(f,g)}_{L_{v}^{r}}\lesssim& \n{f}_{L_{v}^{r,1}}\n{g}_{L_{v}^{1,1}},\quad \text{for $r\in[1,\infty]$},\label{equ:collision,estimate,Q-,Lr}\\
\n{Q^{+}(f,g)}_{L_{v}^{r}}\lesssim& \n{f}_{L_{v}^{r,1}}\n{g}_{L_{v}^{1,1}},\quad \text{for $r\in[1,3)$},\label{equ:collision,estimate,Q+,Lr}\\
\n{Q^{+}(f,g)}_{L_{v}^{r}}\lesssim& \n{f}_{L_{v}^{1,1}}\n{g}_{L_{v}^{r,1}},\quad \text{for $r\in[1,3)$}.\label{equ:collision,estimate,Q+,Lr,gr}
\end{align}
The estimate for the loss term \eqref{equ:collision,estimate,Q-,Lr} follows directly. Indeed, by bounding the relative velocity as $|u-v|\lesssim
\lra{u}\lra{v}$, we obtain
 \begin{align*}
\n{Q^{-}(f,g)}_{L_{v}^{r}}\lesssim& \bbn{f(v)\int_{\R^{3}}|u-v||g(u)|du}_{L_{v}^{r}}\\
\lesssim&\n{\lra{v}f(v)}_{L_{v}^{r}}\n{\lra{u}g(u)}_{L_{u}^{1}}.
 \end{align*}
 The gain term estimates \eqref{equ:collision,estimate,Q+,Lr}--\eqref{equ:collision,estimate,Q+,Lr,gr} follow from \cite[Theorem 1]{ACG10} or \cite[Theorem 2.1]{MV04}.

Then using the collision estimates \eqref{equ:collision,estimate,Q-,Lr} and \eqref{equ:collision,estimate,Q+,Lr,gr} with $r=2$, we derive
\begin{align}
&\n{e^{\lra{v}^{2}}Q^{+}(\mu,f)}_{L_{v}^{2,1}L_{x}^{2}}+
\n{e^{\lra{v}^{2}}Q^{+}(f,\mu)}_{L_{v}^{2,1}L_{x}^{2}}+
\n{e^{\lra{v}^{2}}Q^{-}(\mu,f)}_{L_{v}^{2,1}L_{x}^{2}}\\
\lesssim &
\bbn{Q^{\pm}\lrs{\lra{v}e^{\lra{v}^{2}}\mu,\lra{v}e^{\lra{v}^{2}}f}}_{L_{x}^{2}L_{v}^{2}}+
\bbn{Q^{+}\lrs{\lra{v}e^{\lra{v}^{2}}f,\lra{v}e^{\lra{v}^{2}}\mu}}_{L_{x}^{2}L_{v}^{2}}\notag\\
\lesssim& \n{\lra{v}^{2}e^{\lra{v}^{2}}\mu(v)}_{L_{v}^{2}}\n{\lra{v}^{2}e^{\lra{v}^{2}}f}_{L_{x}^{2}L_{v}^{1}}\lesssim
\n{\lra{v}^{2}e^{\lra{v}^{2}}f}_{L_{x}^{2}L_{v}^{1}}.\notag
\end{align}
which completes the proof of \eqref{equ:Q+,sym,f,mu,Lv2Lx2} and \eqref{equ:Q-,f,mu,Lv2Lx2}.

In a similar way, using again the collision estimates \eqref{equ:collision,estimate,Q-,Lr} and \eqref{equ:collision,estimate,Q+,Lr,gr} with $r=1$, we obtain
\begin{align}
&\n{e^{\lra{v}^{2}}Q^{+}(\mu,f)}_{L_{v}^{1,1}L_{x}^{\infty}}+
\n{e^{\lra{v}^{2}}Q^{+}(f,\mu)}_{L_{v}^{1,1}L_{x}^{\infty}}+
\n{e^{\lra{v}^{2}}Q^{-}(\mu,f)}_{L_{v}^{1,1}L_{x}^{\infty}}\\
\lesssim &
\bbn{Q^{\pm}\lrs{\lra{v}e^{\lra{v}^{2}}\mu,\lra{v}e^{\lra{v}^{2}}\n{f}_{L_{x}^{\infty}}}}_{L_{v}^{2}}+
\bbn{Q^{+}\lrs{\lra{v}e^{\lra{v}^{2}}\n{f}_{L_{x}^{\infty}},\lra{v}e^{\lra{v}^{2}}\mu}}_{L_{v}^{1}}\notag\\
\lesssim& \n{\lra{v}^{2}e^{\lra{v}^{2}}\mu(v)}_{L_{v}^{2}}\n{\lra{v}^{2}e^{\lra{v}^{2}}f}_{L_{v}^{1}L_{x}^{\infty}}\lesssim
\n{\lra{v}^{2}e^{\lra{v}^{2}}f}_{L_{v}^{1}L_{x}^{\infty}}.\notag
\end{align}
which completes the proof of \eqref{equ:Q+,sym,f,mu,Lv1Lxinf} and \eqref{equ:Q-,f,mu,Lv1Lxinf}.

The $Z$-norm estimate \eqref{equ:Z-norm,estimates,mu,f} then follows from \eqref{equ:Q+,sym,f,mu,Lv2Lx2}--\eqref{equ:Q-,f,mu,Lv1Lxinf}.
\end{proof}

With the collision estimates at hand, we are now in a position to analyze the error terms.
We recall the key bounds for $f_{\mathrm{r}}$ established in Lemma \ref{lemma:bounds on fr}, which read
\begin{align}
\n{\lra{v}^{2}e^{\lra{v}^{2}}f_{\mathrm{r}}}_{L_{t}^{\infty}(0,T_{*};Z)}\lesssim& 1,\label{equ:z-norm,fr,used}\\
\n{\lra{v}^{2}e^{\lra{v}^{2}}f_{\mathrm{r}}}_{L_{t}^{\infty}(0,T_{*};L_{v}^{1}L_{x}^{\infty})}\lesssim & N^{-1},\label{equ:z-norm,lv1,fr}\\
M^{-1}\n{\lra{v}^{2}e^{\lra{v}^{2}}\nabla_{x}f_{\mathrm{r}}}_{L_{t}^{\infty}(0,T_{*};L_{v}^{1}L_{x}^{\infty})}
\lesssim& N^{-1},\label{equ:z-norm,lv1,H1,fr}\\
\n{\lra{v}^{2}e^{\lra{v}^{2}}f_{\mathrm{r}}}_{L_{t}^{\infty}(0,T_{*};L_{v}^{1}L_{x}^{2})}\lesssim & N^{-1},\label{equ:z-norm,lv1,Lx2,fr}\\
M^{-1}\n{\lra{v}^{2}e^{\lra{v}^{2}}\nabla_{x}f_{\mathrm{r}}}_{L_{t}^{\infty}(0,T_{*};L_{v}^{1}L_{x}^{2})}
\lesssim& N^{-1}.\label{equ:z-norm,lv1,Lx2,H1,fr}
\end{align}

\begin{lemma}\label{lemma:Z-norm estimate,fr,mu}
For $0\leq  t_{0}\leq t \leq T_{*}$, we have
\begin{align}\label{equ:Z norm,Q-+,mu,fr}
\bbn{e^{\lra{v}^{2}} \int_{t_{0}}^{t}S_{\nu}(t-\tau)\lrc{Q^{\pm}(f_{\mathrm{r}},\mu)\pm Q^{\pm}(\mu,f_{\mathrm{r}})}(\tau)d\tau}_{Z}\lesssim M^{-\delta}.
\end{align}
\end{lemma}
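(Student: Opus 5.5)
The plan is to reduce everything to the collision bounds of Lemma \ref{lemma:weighted,Z-norm,estimates} combined with the smallness estimates \eqref{equ:z-norm,lv1,fr}--\eqref{equ:z-norm,lv1,Lx2,H1,fr} for $f_{\mathrm{r}}$. The idea is crude: put the $Z$-norm inside the time integral and use that the damped transport semigroup is harmless in every component of the $Z$-norm. No dispersive gain is needed, because $f_{\mathrm{r}}$ is concentrated in $|v|\lesssim N^{-1}$, so any norm that is $L^1$ in velocity gains $N^{-1}=M^{-10}$. This more than compensates the at most $M^{s_0}$ growth built into the $Z$-norm.

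\textbf{Step 1 (the semigroup).} For $\tau\le t$ I will check that
\[
\n{e^{\lra{v}^{2}}S_{\nu}(t-\tau)h}_{Z}\le \n{e^{\lra{v}^{2}}h}_{Z}.
\]
Indeed, $S_\nu(t-\tau)$ acts in $x$ by a translation $x\mapsto x-(t-\tau)v$ at each fixed $v$, and multiplies by the factor $e^{-(t-\tau)\nu(v)}\le 1$. This factor commutes with the weights $\lra{v}e^{\lra{v}^2}$ and with $\nabla_x$. Translation in $x$ preserves $L_x^2$ and $L_x^\infty$ for each $v$, after which the $L_v^2$ or $L_v^1$ norm is taken. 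By Minkowski's inequality, the left side of \eqref{equ:Z norm,Q-+,mu,fr} is then bounded by
\[
\int_{t_0}^t \n{e^{\lra{v}^2}\big[Q^{\pm}(f_{\mathrm r},\mu)\pm Q^{\pm}(\mu,f_{\mathrm r})\big](\tau)}_Z\,d\tau \le T_*\sup_\tau(\cdots).
\]

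\textbf{Step 2 (the three terms of \eqref{equ:Z-norm,estimates,mu,f}).} The terms $Q^+(f_{\mathrm r},\mu)$, $Q^+(\mu,f_{\mathrm r})$ and $Q^-(\mu,f_{\mathrm r})$ are covered directly by \eqref{equ:Z-norm,estimates,mu,f}, using $\nabla_x Q(\cdot,\cdot)=Q(\nabla_x\cdot,\cdot)$ since $\mu$ is $x$-independent. The right side is bounded by
\[
M^{s_0}N^{-1}+M^{s_0}N^{-1}+N^{-1}+N^{-1}\lesssim M^{s_0-10}.
\]
Here I need the $L_x^2L_v^1$ versions, which follow from \eqref{equ:z-norm,lv1,Lx2,fr}, \eqref{equ:z-norm,lv1,Lx2,H1,fr} and Minkowski ($\n{\cdot}_{L_x^2L_v^1}\le\n{\cdot}_{L_v^1L_x^2}$). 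The $M^{-1}\nabla_x$ component uses \eqref{equ:z-norm,lv1,H1,fr}.

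\textbf{Step 3 (the remaining loss term).} The term $Q^-(f_{\mathrm r},\mu)=\nu f_{\mathrm r}$ is not in Lemma \ref{lemma:weighted,Z-norm,estimates}, and I expect it to be the main point. By \eqref{equ:collision,estimate,Q-,Lr}, it is bounded pointwise in $x$ by $\lra{v}|f_{\mathrm r}|$ times $\n{\lra{u}\mu}_{L^1}$. Its $L_v^1$-based components are therefore again $O(N^{-1})$. The $L_v^{2,1}L_x^2$ components are not $L^1$ in $v$. Here I will use instead the compact velocity support $|v|\le N^{-1}$ of $f_{\mathrm r}$, on which $\nu\sim 1$ and $\lra{v}\sim1$. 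This gives
\[
M^{s_0}\n{e^{\lra{v}^2}\nu f_{\mathrm r}}_{L_v^{2,1}L_x^2}+M^{s_0-1}\n{e^{\lra{v}^2}\nu \nabla_xf_{\mathrm r}}_{L_v^{2,1}L_x^2}\lesssim \n{\lra{v}^2e^{\lra{v}^2}f_{\mathrm r}}_{Z}\lesssim1
\]
by \eqref{equ:z-norm,fr,used}. This term therefore yields no smallness from $N$. The smallness comes instead from the time integral, whose length is at most $T_*=M^{s-1}\ln\ln M$.

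Combining the steps, the left side of \eqref{equ:Z norm,Q-+,mu,fr} is bounded by
\[
T_*\big(1+M^{s_0-10}\big)\lesssim M^{s-1}\ln\ln M\le M^{-\delta}
\]
for any $\delta<1-s$. The only delicate points are the bookkeeping of the $\lra{v}$ weights and the fact that the $\nu f_{\mathrm r}$ term relies on $T_*\to0$ rather than on $N$.
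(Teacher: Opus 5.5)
Your proposal is correct and follows the paper's proof. The paper also bounds $Q^{+}(f_{\mathrm{r}},\mu)$, $Q^{+}(\mu,f_{\mathrm{r}})$ and $Q^{-}(\mu,f_{\mathrm{r}})$ through Lemma \ref{lemma:weighted,Z-norm,estimates} and the $N^{-1}$ smallness of $f_{\mathrm{r}}$ in $L_v^1$-based norms, bounds $Q^{-}(f_{\mathrm{r}},\mu)=\nu f_{\mathrm{r}}$ by $O(1)$ in $Z$ via \eqref{equ:z-norm,fr,used}, and takes the smallness from $T_{*}=M^{s-1}\ln\ln M$; your explicit semigroup check and your bookkeeping $M^{s_{0}}N^{-1}$ (where the paper writes $N^{-1}$) are slightly more careful but change nothing.
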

\begin{proof}
Applying the weighted $Z$-norm collision estimates \eqref{equ:Z-norm,estimates,mu,f} in Lemma \ref{lemma:weighted,Z-norm,estimates}, we obtain
\begin{align*}
&\n{e^{\lra{v}^{2}}Q^{+}(\mu,f_{\mathrm{r}})}_{Z}+
\n{e^{\lra{v}^{2}}Q^{+}(f_{\mathrm{r}},\mu)}_{Z}+\n{e^{\lra{v}^{2}}Q^{-}(\mu,f_{\mathrm{r}})}_{Z}\\
\lesssim& M^{s_{0}}\n{\lra{v}^{2}e^{\lra{v}^{2}}f_{\mathrm{r}}}_{L_{x}^{2}L_{v}^{1}}
+M^{-1}\n{\lra{v}^{2}e^{\lra{v}^{2}}\nabla_{x}f_{\mathrm{r}}}_{L_{x}^{2}L_{v}^{1}}\notag\\
&+\n{\lra{v}^{2}e^{\lra{v}^{2}}f_{\mathrm{r}}}_{L_{v}^{1}L_{x}^{\infty}}+
M^{-1}\n{\lra{v}^{2}e^{\lra{v}^{2}}\nabla_{x}f_{\mathrm{r}}}_{L_{v}^{1}L_{x}^{\infty}},\notag
\end{align*}
which, together with estimates \eqref{equ:z-norm,lv1,fr}--\eqref{equ:z-norm,lv1,Lx2,H1,fr}, gives
\begin{align}\label{equ:Q+,mu,fr,Z-norm}
\n{e^{\lra{v}^{2}}Q^{+}(\mu,f_{\mathrm{r}})}_{Z}+
\n{e^{\lra{v}^{2}}Q^{+}(f_{\mathrm{r}},\mu)}_{Z}+\n{e^{\lra{v}^{2}}Q^{-}(\mu,f_{\mathrm{r}})}_{Z}\lesssim N^{-1}.
\end{align}

For the term $Q^{-}(f_{\mathrm{r}},\mu)=\nu f_{\mathrm{r}}$, it follows from \eqref{equ:z-norm,lv1,fr} that
\begin{align}\label{equ:Q-,mu,fr,Z-norm}
\n{e^{\lra{v}^{2}}Q^{-}(f_{\mathrm{r}},\mu)}_{Z}\lesssim \n{\lra{v}e^{\lra{v}^{2}} f_{\mathrm{r}}}_{Z}\lesssim \n{f_{\mathrm{r}}}_{Z}\lesssim 1.
\end{align}
Consequently, combining \eqref{equ:Q+,mu,fr,Z-norm} and \eqref{equ:Q-,mu,fr,Z-norm}, we arrive at
\begin{align*}
&\bbn{e^{\lra{v}^{2}} \int_{t_{0}}^{t}S_{\nu}(t-\tau)\lrc{Q^{\pm}(f_{\mathrm{r}},\mu)\pm Q^{\pm}(\mu,f_{\mathrm{r}})}(\tau)d\tau}_{Z}\\
\lesssim& T_{*}\bn{e^{\lra{v}^{2}}Q^{\pm}(f_{\mathrm{r}},\mu)}_{L_{t}^{\infty}(0,T_{*};Z)}+
\bn{e^{\lra{v}^{2}}Q^{\pm}(\mu,f_{\mathrm{r}})}_{L_{t}^{\infty}(0,T_{*};Z)}\\
\lesssim& T_{*}\lesssim M^{s-1}\ln \ln M\lesssim M^{-\delta},
\end{align*}
which is sufficient for our purpose, upon choosing $\delta<\frac{1-s}{2}$.
\end{proof}

\subsubsection{Analysis of $Q^{-}(\mu,f_{\mathrm{b}})$}\label{section:Analysis,Q-,mu,fb}
\begin{lemma}\label{lemma:Z-norm estimate,Q-,fb,mu}
For $0\leq  t_{0}\leq t \leq T_{*}$, we have
	\begin{align}\label{equ:Z norm,Q-,mu,fb}
\bbn{ e^{\lra{v}^{2}}\int_{t_{0}}^{t}S_{\nu}(t-\tau)Q^{-}(\mu,f_{\mathrm{b}})(\tau)\,d\tau} _{Z}\lesssim M^{-\delta}.
	\end{align}
\end{lemma}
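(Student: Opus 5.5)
The plan is to use the explicit structure of the term. Up to a constant, $Q^{-}(\mu,f_{\mathrm{b}})=\mu(v)A[f_{\mathrm{b}}](\tau,x,v)$, and $e^{\lra{v}^{2}}\mu\lesssim e^{-|v|^{2}}$. So all velocity weights, including the $\lra{v}$ in the $Z$-norm, are harmless. Since $e^{-(t-\tau)\nu}\le1$, the damped transport operator $S_{\nu}$ is bounded by the free one, and it suffices to estimate
\[
\int_{t_{0}}^{t}e^{-|v|^{2}}\,A[f_{\mathrm{b}}]\big(\tau,x-(t-\tau)v,v\big)\,d\tau
\]
and its $x$-gradient in the four pieces of the $Z$-norm.

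\textbf{The $L_{v}^{2,1}L_{x}^{2}$ pieces.} Here $S(t-\tau)$ is an isometry on $L^{2}_{x}$, so I take the time integral outside at the cost of a factor $T_{*}\le1$. I then apply \eqref{equ:Q-,f,mu,Lv2Lx2} of Lemma \ref{lemma:weighted,Z-norm,estimates}. Since $\nabla_{x}$ commutes with $Q^{-}(\mu,\cdot)$, the same bound applies with $\nabla_{x}f_{\mathrm{b}}$ in place of $f_{\mathrm{b}}$. Because $\operatorname{Supp}f_{\mathrm{b}}\subset\{|v|\le 5/4\}$ by \eqref{equ:Iij,sum}, the factor $\lra{v}^{2}$ costs nothing, and \eqref{equ:sobolev norm,fb,L2L1} yields
\[
M^{s_{0}}\n{\cdot}_{L_{v}^{2,1}L_{x}^{2}}+M^{s_{0}-1}\n{\nabla_{x}\cdot}_{L_{v}^{2,1}L_{x}^{2}}\lesssim M^{s_{0}-\frac12-s}.
\]
This is $\lesssim M^{-\delta}$ since $s_{0}<1$ and $s>\tfrac34$.

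\textbf{The $L_{v}^{1,1}L_{x}^{\infty}$ pieces (main obstacle).} Here the crude route fails. Lemma \ref{lemma:weighted,Z-norm,estimates} together with \eqref{equ:sobolev norm,fb,LvpLxinf} only gives $M^{1-s}$ per unit time, and after multiplying by $T_{*}$ this is $\ln\ln M$, which is not small. This is exactly the large-growth phenomenon noted in the introduction.

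Instead I will exploit dispersion along the characteristic, using the pointwise bound \eqref{equ:upper bound,Afb}:
\[
A[f_{\mathrm{b}}](\tau,y,v)\lesssim M^{1-s}(M|y|+1)^{-2}.
\]
This profile concentrates at the origin uniformly in $\tau$, because each tube $K_{i,j}(y-\tau u)$ with $u\in\operatorname{Supp}I_{i,j}$ is still aligned with $e_{i,j}$ through the origin up to an error of order $1/M$.

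For fixed $(x,v)$, the line $\tau\mapsto x-(t-\tau)v$ has speed $|v|$. Hence
\[
\int_{t_{0}}^{t}(M|x-(t-\tau)v|+1)^{-2}\,d\tau\lesssim\frac{1}{M|v|},
\]
uniformly in $x$, $t_{0}$ and $t$. The supremum in $x$ of the Duhamel integral is therefore $\lesssim M^{-s}e^{-|v|^{2}}|v|^{-1}$, and since $|v|^{-1}$ is integrable in $\R^{3}$ this gives an $L_{v}^{1,1}L_{x}^{\infty}$ bound of $M^{-s}$.

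For the gradient I need the analogous profile bound
\[
|\nabla_{x}A[f_{\mathrm{b}}]|\lesssim M^{2-s}(M|y|+1)^{-2}.
\]
I expect to prove it exactly as \eqref{equ:upper bound,Afb}: apply \eqref{equ:Kij,sum} with $k=1$ inside $A$, using $|u-v|\lesssim\lra{v}$ on the support. The same line integral then gives $M^{-1}\cdot M^{2-s}\cdot M^{-1}=M^{-s}$.

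Combining the two parts gives the claimed bound with any $\delta<\min\{s-s_{0}+\tfrac12,\,s\}$. The only delicate point is to verify the uniform-in-$\tau$ profile bounds for $A$ and $\nabla_{x}A$ with the tubes transported; I would check these first.
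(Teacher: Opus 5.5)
Your proposal is correct and follows the paper's proof essentially step by step. The $L_{v}^{2,1}L_{x}^{2}$ pieces go through \eqref{equ:Q-,f,mu,Lv2Lx2} and \eqref{equ:sobolev norm,fb,L2L1}, and the $L_{v}^{1,1}L_{x}^{\infty}$ pieces use the uniform-in-time profile bound \eqref{equ:upper bound,Afb} integrated along the characteristic $\tau\mapsto x-(t-\tau)v$, which gains the factor $(M|v|)^{-1}$. Your handling of the gradient terms, which the paper only calls ``analogous,'' is also sound, provided you count the slightly fattened tubes $\{|P_{e_{i,j}}^{\perp}y|\le 2/M\}$ rather than applying \eqref{equ:Kij,sum} literally at the shifted point $y-\tau u$ (that would lose the concentration at the origin). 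This is exactly what your remark on the $O(1/M)$ transverse shift indicates.
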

\begin{proof}
In view of the definition \eqref{equ:Z-norm,error term estimate} of the
$Z$-norm, it suffices to establish the estimates for the two norms
$M^{s_{0}}\Vert e^{\lra{v}^{2}} \cdot \Vert_{L_{v}^{2,1}L_{x}^{2}}$ and $\Vert e^{\lra{v}^{2}} \cdot \Vert _{L_{v}^{1,1}L_{x}^{\infty}}$. The remaining estimates involving spatial $x$-derivatives, namely $M^{s_{0}-1}\n{e^{\lra{v}^{2}}\nabla_{x}\cdot}_{L_{v}^{2,1}L_{x}^{2}}$ and $M^{-1}\n{e^{\lra{v}^{2}}\nabla_{x}\cdot}_{L_{v}^{1,1}L_{x}^{\infty}}$, follow from an analogous argument.
Indeed, each spatial derivative $\nabla_{x}$ contributes an additional factor of $M$, which is already compensated by the factor
$M^{-1}$ present in the corresponding norms.

\vspace{1em}
\textbf{The $M^{s_{0}}\Vert e^{\lra{v}^{2}} \cdot \Vert_{L_{v}^{2,1}L_{x}^{2}}$ estimate.}

Applying the collision estimate \eqref{equ:Q-,f,mu,Lv2Lx2} in Lemma \ref{lemma:weighted,Z-norm,estimates}, we obtain
\begin{align}\label{equ:Q-,fb,mu,Lv2Lx2}
&M^{s_{0}}\bbn{ e^{\lra{v}^{2}}\int_{t_{0}}^{t}S_{\nu}(t-\tau)Q^{-}(\mu,f_{\mathrm{b}})(\tau)\,d\tau}_{L_{v}^{2,1}L_{x}^{2}}\\
\lesssim& T_{*}M^{s_{0}}\n{e^{\lra{v}^{2}}Q^{-}(\mu,f_{\mathrm{b}})}_{L_{t}^{\infty }(0,T_{*};L_{v}^{2,1}L_{x}^{2})}\notag\\
\lesssim& T_{*}M^{s_{0}}\n{\lra{v}^{2}e^{\lra{v}^{2}}f_{\mathrm{b}}}_{L_{t}^{\infty }(0,T_{*};L_{x}^{2}L_{v}^{1})}.\notag
\end{align}
Then, applying the Sobolev norm estimate \eqref{equ:sobolev norm,fb,L2L1} for $f_{\mathrm{b}}$ from Lemma \ref{lemma:Hs,bounds on fb}, and substituting $T_{*}=M^{s-1}(\ln\ln M)$, we arrive at
\begin{align*}
M^{s_{0}}\bbn{ e^{\lra{v}^{2}}\int_{t_{0}}^{t}S_{\nu}(t-\tau)Q^{+}(f_{\mathrm{b}},\mu)(\tau)\,d\tau}_{L_{v}^{2}L_{x}^{2}}\lesssim& T_{*}M^{s_{0}}M^{-\frac{1}{2}-s}
\lesssim M^{-\frac{3}{2}+s_{0}}(\ln\ln M),
\end{align*}
which meets our requirement with $s_{0}<1$.

\vspace{1em}
\textbf{The $\Vert e^{\lra{v}^{2}} \cdot \Vert _{L_{v}^{1,1}L_{x}^{\infty}}$ estimate.}

For notational convenience, we define
\begin{equation}\label{equ:D-,mu,fb}
D^{-}(\mu,f_{\mathrm{b}})=:\int_{t_{0}}^{t}S_{\nu}(t-\tau)Q^{-}(\mu,f_{\mathrm{b}})(\tau)d\tau=
\int_{t_{0}}^{t}S_{\nu}(t-\tau)(\mu A[f_{\mathrm{b}}](\tau))d\tau.
\end{equation}
In view of the pointwise estimate \eqref{equ:upper bound,Afb} for $A[f_{\mathrm{b}}]$, we have
\begin{align}\label{equ:Afb,upper bound}
A[f_{\mathrm{b}}](t)\lesssim&  M^{1-s}\lrs{\frac{1}{M|x|+1}}^{2}.
\end{align}
Substituting \eqref{equ:Afb,upper bound} into \eqref{equ:D-,mu,fb}, we obtain
\begin{align*}
D^{-}(\mu,f_{\mathrm{b}})
\lesssim &\int_{t_{0}}^{t} M^{1-s} \mu(v)  \lrs{\frac{1}{
M|x-v(t-\tau)|+1}}^{2}d\tau.\notag\\
\lesssim& M^{1-s}\mu(v)
\int_{0}^{t}  \lrs{\frac{1}{
M|x-v(t-\tau)|+1}}^{2}d\tau\\
\lesssim & M^{1-s}\frac{\mu(v)}{M|v|}\lesssim M^{-s}\frac{\mu(v)}{|v|}.
\end{align*}
Therefore, noting that $\mu(v)=e^{-2|v|^{2}}$, we conclude that
\begin{align}\label{equ:Q-,fb,fb,L1,final}
\n{e^{\lra{v}^{2}}D^{-}(\mu,f_{\mathrm{b}})}_{L_{v}^{1,1}L_{x}^{\infty}}\lesssim& M^{-s} \bbn{\lra{v}e^{\lra{v}^{2}}\frac{\mu(v)}{|v|}}_{L_{v}^{1}}
	\lesssim M^{-\delta},
\end{align}
provided that $\delta<s$.
\end{proof}

\subsubsection{Analysis of $Q^{+}(f_{\mathrm{b}},\mu)$, and $Q^{+}(\mu,f_{\mathrm{b}})$}
\label{section:Analysis,Q+,mu,fb}
\begin{lemma}\label{lemma:Z-norm estimate,Q+,fb,mu}
For $0\leq  t_{0}\leq t \leq T_{*}$, we have
\begin{align}\label{equ:Z norm,Q+,mu,fb}	
\bbn{e^{\lra{v}^{2}}\int_{t_{0}}^{t}S_{\nu}(t-\tau)
\lrc{Q^{+}(f_{\mathrm{b}},\mu)+Q^{+}(\mu,f_{\mathrm{b}})}(\tau)d\tau}_{Z}\lesssim M^{-\delta}.
	\end{align}
\end{lemma}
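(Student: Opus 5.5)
The plan is to split the $Z$-norm into its four pieces, as in the proof of Lemma \ref{lemma:Z-norm estimate,Q-,fb,mu}. I treat only $M^{s_{0}}\n{e^{\lra{v}^{2}}\cdot}_{L_{v}^{2,1}L_{x}^{2}}$ and $\n{e^{\lra{v}^{2}}\cdot}_{L_{v}^{1,1}L_{x}^{\infty}}$ in detail. The two derivative pieces should follow the same way, since each $\nabla_{x}$ falling on $f_{\mathrm{b}}$ costs a factor $M$ (by \eqref{equ:sobolev norm,fb,L2L1}, \eqref{equ:sobolev norm,fb,LvpLxinf} and \eqref{equ:Kij,sum} with $k=1$), and this is compensated by the $M^{-1}$ in the norm. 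The $L^{2}$ piece is routine. I apply \eqref{equ:Q+,sym,f,mu,Lv2Lx2} at each time, use that $S_{\nu}$ is a contraction on $L_{x}^{2}$ and that the weight $e^{\lra{v}^{2}}$ commutes with it, and invoke \eqref{equ:sobolev norm,fb,L2L1}. Since $f_{\mathrm{b}}$ is supported in $|v|\le 10$, the extra factor $\lra{v}^{2}$ is harmless, and the resulting bound is
\begin{align*}
T_{*}M^{s_{0}}M^{-\frac{1}{2}-s}\lesssim M^{s_{0}-\frac{3}{2}}\ln\ln M\lesssim M^{-\delta}.
\end{align*}

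The $L_{v}^{1,1}L_{x}^{\infty}$ piece is the real issue. The crude route, \eqref{equ:Q+,sym,f,mu,Lv1Lxinf} combined with \eqref{equ:sobolev norm,fb,LvpLxinf}, only gives $T_{*}M^{1-s}\sim\ln\ln M$, which is not small. So, as for $D^{-}(\mu,f_{\mathrm{b}})$, I must exploit that the spatial concentration of $f_{\mathrm{b}}$ is dispersed by the transport $S_{\nu}(t-\tau)$ acting in the $v$-variable of the output. The plan has three steps.

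\textbf{Step 1 (Gaussian decay of the output).} Because $|v^{*}|\le 10$ on the support of $f_{\mathrm{b}}(v^{*})$, energy conservation gives $\mu(u^{*})\lesssim e^{-2|v|^{2}-2|u|^{2}+200}$. The same holds with the roles of the two arguments exchanged for $Q^{+}(\mu,f_{\mathrm{b}})$. Hence $e^{\lra{v}^{2}}Q^{+}(f_{\mathrm{b}},\mu)$ and $e^{\lra{v}^{2}}Q^{+}(\mu,f_{\mathrm{b}})$ still carry a factor $e^{-|v|^{2}}$, and the weight causes no loss.

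\textbf{Step 2 (Carleman representation).} I rewrite both gain terms in the form
\begin{align*}
\int k(v,u)f_{\mathrm{b}}(\tau,x,u)\,du,\qquad 0\le k(v,u)\lesssim |v-u|^{-1}e^{-c|v|^{2}},
\end{align*}
which is the standard Grad/Carleman form of $K$ for hard spheres with one Maxwellian argument.

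\textbf{Step 3 (dispersive time integration).} Using $S_{\nu}(t-\tau)$ and $e^{-\nu(t-\tau)}\le 1$, I bound the Duhamel integral pointwise by
\begin{align*}
M^{1-s}e^{-c|v|^{2}}\sum_{i,j}\int |v-u|^{-1}I_{i,j}(u)\int_{t_{0}}^{t}K_{i,j}\big(x-v(t-\tau)-u\tau\big)\,d\tau\,du .
\end{align*}
The argument of $K_{i,j}$ moves with velocity $u-v$, and $K_{i,j}$ is a tube of width $M^{-1}$ and length $1$ in direction $e_{i,j}$. So the $\tau$-integral is at most $\min\{T_{*},\,(M|P^{\perp}_{e_{i,j}}(u-v)|)^{-1}\}$. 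Since $u$ is in the support of $I_{i,j}$, we have $P^{\perp}_{e_{i,j}}u=O(M^{-1})$, so this is essentially $\min\{T_{*},(M|P^{\perp}_{e_{i,j}}v|+1)^{-1}\}$.

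Summing over $(i,j)$, I use the $M^{-2}$ size of each $I_{i,j}$ in $u$ (the $u$-integral over a tube $I_{i,j}$ is $\sim M^{-2}$), the $M^{-1}$ separation \eqref{equ:gap estimate,eij}, and the counting in \eqref{equ:Iij,sum}. My expectation is a bound of the form
\begin{align*}
M^{1-s}e^{-c|v|^{2}}\cdot M^{-1}\,\Phi(v),\qquad \Phi\in L_{v}^{1}\big(\lra{v}e^{-c|v|^{2}}dv\big),
\end{align*}
where $\Phi$ may have at most logarithmic or $|v|^{-1}$-type singularities. That would give $M^{-s}\lesssim M^{-\delta}$ after the $L_{v}^{1,1}$ integration, mirroring \eqref{equ:Q-,fb,fb,L1,final}.

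\textbf{Main obstacle.} The hardest step is this last summation: I have to handle the Carleman singularity $|v-u|^{-1}$ together with the dispersive gain $\min\{T_{*},(M|P^{\perp}(v-u)|)^{-1}\}$ uniformly in $x$. My first attempt will be to split $v$ into the region $|v-u|\ge M^{-1/2}$, where the singularity is bounded by $M^{1/2}$ and dispersion gains $M^{-1}$, and the region $|v-u|<M^{-1/2}$, where I use the crude $T_{*}M^{1-s}$ bound times the small $v$-measure. If this loses too much, I will instead spend some of the room in $T_{*}=M^{s-1}\ln\ln M$ together with $s>\frac{3}{4}$, and trade a power of $|v-u|$ through Hölder in $u$ using \eqref{equ:sobolev norm,fb,LvpLxinf} with $p>1$, so that the singular kernel becomes integrable.
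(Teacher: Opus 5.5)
Your proposal is correct. The $L_{v}^{2,1}L_{x}^{2}$ piece and the treatment of the derivative pieces coincide with the paper, but for the $L_{v}^{1,1}L_{x}^{\infty}$ piece you take a genuinely different route.

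The paper stays in the $\omega$-representation. After the pre/post-collisional change of variables, the argument of $K_{i,j}$ is $x-v^{*}t-P_{\omega}(v-u)\tau$, so the time integral is bounded by $\big(M\sin\phi_{i,j}(\omega)\,|(v-u)\cdot\omega|\big)^{-1}$. The degenerate set $\{\sin\phi_{i,j}(\omega)\le M^{-1}\}\cup\{|\cos\theta|\le M^{-1/2}\}$ is cut out of $\mathbb{S}^{2}$ and estimated trivially. This yields $\int_{\mathbb{S}^{2}}\|E_{i,j}\|_{L^\infty_x}d\omega\lesssim M^{-1/2}(1+|u-v|^{-1})$ uniformly in $(i,j)$. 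The tube sum then only uses $\sum_{i,j}I_{i,j}\le 1$, and the final rate is $M^{\frac12-s}$.

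You instead integrate out $\omega$ first via Carleman. The relevant relative velocity is then simply $v-u$ (output minus tube velocity), and the angular bookkeeping becomes an integral in $v$ near the axis $\mathbb{R}e_{i,j}$ and near $u$. Two points should be made explicit:
\begin{enumerate}
\item The bound $k(v,u)\lesssim |v-u|^{-1}e^{-2|v|^{2}}$ is not a global property of the kernel. The exact kernel is $\pi|v-u|^{-1}\exp\big(-2(v\cdot \widehat{u-v})^{2}\big)$. The inequality $(v\cdot\widehat{u-v})^{2}\ge |v|^{2}-|u|^{2}$ gives Gaussian decay only because $|u|\le \frac54$ on $\operatorname{supp}f_{\mathrm{b}}$. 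You also need the exponent $2>1$ to beat $e^{\langle v\rangle^{2}}$.
\item $Q^{+}(\mu,f_{\mathrm{b}})=Q^{+}(f_{\mathrm{b}},\mu)$ for hard spheres, so the second term is literally the same.
\end{enumerate}

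The summation you left as an expectation does close, and more simply than by counting directions. For $u\in\operatorname{supp}I_{i,j}$ one has $1+M|P^{\perp}_{e_{i,j}}(v-u)|\gtrsim 1+M|P^{\perp}_{e_{i,j}}v|$. Cylindrical coordinates about $e_{i,j}$ then give
\[
\int_{\mathbb{R}^{3}}\frac{\langle v\rangle e^{-|v|^{2}}}{|v-u|\,\big(1+M|P^{\perp}_{e_{i,j}}v|\big)}\,dv\lesssim M^{-1}
\]
uniformly in such $u$. Summing with $\sum_{i,j}\|I_{i,j}\|_{L^{1}}\lesssim 1$ gives $M^{1-s}\cdot M^{-1}=M^{-s}$. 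Your first split $|v-u|\gtrless M^{-1/2}$ also works: the outer region gives $M^{\frac12-s}$ and the inner region $M^{-1}\ln\ln M$, reproducing the paper's rate.

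So your route gives a sharper exponent (only $s>0$ is needed for this term) and avoids the angular decomposition, at the cost of invoking the Carleman representation. The paper's route is more elementary and reuses the $\omega$-parametrized machinery of \cite{CGSZ26}.
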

\begin{proof}
By the symmetry of the collision estimates for the gain term $Q^{+}$, we only need to deal with $Q^{+}(f_{\mathrm{b}},\mu)$, while the treatment of $Q^{+}(\mu,f_{\mathrm{b}})$ is analogous.
As in the case of $Q^{-}(\mu,f_{\mathrm{b}})$,
we restrict our attention to the estimates for
$M^{s_{0}}\Vert e^{\lra{v}^{2}} \cdot \Vert_{L_{v}^{2,1}L_{x}^{2}}$ and $\Vert e^{\lra{v}^{2}} \cdot \Vert _{L_{v}^{1,1}L_{x}^{\infty}}$.
The estimates involving spatial derivatives, i.e. $M^{s_{0}-1}\n{e^{\lra{v}^{2}}\nabla_{x}\cdot}_{L_{v}^{2,1}L_{x}^{2}}$ and $M^{-1}\n{e^{\lra{v}^{2}}\nabla_{x}\cdot}_{L_{v}^{1,1}L_{x}^{\infty}}$, follow from a similar way, since each derivative $\nabla_{x}$ introduces an additional factor of
$M$, which is precisely compensated by the factor
$M^{-1}$ in the corresponding norms.

\vspace{1em}
\textbf{The $M^{s_{0}}\Vert e^{\lra{v}^{2}} \cdot \Vert_{L_{v}^{2,1}L_{x}^{2}}$ estimate.}

By the collision estimate \eqref{equ:Q+,sym,f,mu,Lv2Lx2} in Lemma \ref{lemma:weighted,Z-norm,estimates}, we obtain
\begin{align}\label{equ:equ:Q+,fb,fb,pointwise,first}
	&M^{s_{0}}\bbn{ e^{\lra{v}^{2}}\int_{t_{0}}^{t}S_{\nu}(t-\tau)Q^{+}(f_{\mathrm{b}},\mu)(\tau)\,d\tau}_{L_{v}^{2,1}L_{x}^{2}}\\
\lesssim& T_{*}M^{s_{0}}\n{e^{\lra{v}^{2}}Q^{+}(f_{\mathrm{b}},\mu)}_{L_{t}^{\infty }(0,T_{*};L_{v}^{2,1}L_{x}^{2})}\notag\\
\lesssim& T_{*}M^{s_{0}}\n{\lra{v}^{2}e^{\lra{v}^{2}}f_{\mathrm{b}}}_{L_{t}^{\infty }(0,T_{*};L_{x}^{2}L_{v}^{1})}.\notag
\end{align}
Applying Sobolev norm estimate \eqref{equ:sobolev norm,fb,L2L1} for $f_{\mathrm{b}}$ from Lemma \ref{lemma:Hs,bounds on fb} and taking $T_{*}=M^{s-1}(\ln\ln M)$, we arrive at
\begin{align*}
M^{s_{0}}\bbn{ e^{\lra{v}^{2}}\int_{t_{0}}^{t}S_{\nu}(t-\tau)Q^{+}(f_{\mathrm{b}},\mu)(\tau)\,d\tau}_{L_{v}^{2}L_{x}^{2}}\lesssim& T_{*}M^{s_{0}}M^{-\frac{1}{2}-s}
\lesssim M^{-\frac{3}{2}+s_{0}}(\ln\ln M),
\end{align*}
which is enough for our goal since $s_{0}<1$.

\vspace{1em}
\textbf{The $\Vert e^{\lra{v}^{2}} \cdot \Vert _{L_{v}^{1,1}L_{x}^{\infty}}$ estimate.}

We recall that $f_{\mathrm{b}}$ takes the form
\begin{align}\label{equ:fb,D+}
f_{\mathrm{b}}(t,x,v)=&M^{1-s}\sum_{i,j}K_{i,j}(x-vt)I_{i,j}(v).
\end{align}
For brevity, we introduce the notation
\begin{align*}
D^{+}(f_{\mathrm{b}},\mu)=&\int_{t_{0}}^{t}S_{\nu}(t-\tau)Q^{+}(f_{\mathrm{b}},\mu)(\tau)d\tau \\
=&M^{1-s}\sum_{i,j}\int_{t_{0}}^{t}S_{\nu}(t-\tau)Q^{+}(K_{i,j}(x- v\tau )I_{i,j}(v),\mu(v))d\tau\\
= &M^{1-s}\sum_{i,j}\int_{
\mathbb{S}^{2}} \int_{\R^{3}} |(u-v)\cdot \omega|
H_{i,j}(t_{0},t,x,\omega,u^{*},v^{*}) dud\omega,\notag
\end{align*}
where
\begin{align*}
H_{i,j}(t_{0},t,x,\omega,u^{*},v^{*})=:\int_{t_{0}}^{t} e^{-\nu(v)(t-\tau)} K_{i,j}(x-v(t-\tau)-v^{*}\tau)I_{i,j}(v^{*}) \mu(u^{*})d\tau.
\end{align*}

Using the inequalities that
\begin{align*}
\lra{v}e^{\lra{v}^{2}}\lesssim e^{\frac{3}{2}\lrs{|u^{*}|^{2}+|v^{*}|^{2}}},
\quad |(u-v)\cdot \omega|\lesssim |u-v|,
\end{align*}
we have
\begin{align}
&\n{\lra{v}e^{\lra{v}^{2}}D^{+}(f_{\mathrm{b}},\mu)}_{L_{v}^{1}L_{x}^{\infty}}\\
\lesssim&  M^{1-s}\sum_{i,j}\int_{
\mathbb{S}^{2}} \int_{\R^{3}\times \R^{3}} \bbn{|u-v|
e^{\frac{3}{2}\lrs{|u^{*}|^{2}+|v^{*}|^{2}}}H_{i,j}(t_{0},t,x,\omega,u^{*},v^{*})}_{L_{x}^{\infty}} dvdud\omega\notag\\
=&M^{1-s}\sum_{i,j}\int_{
\mathbb{S}^{2}} \int_{\R^{3}\times \R^{3}} \bbn{|u-v|
e^{\frac{3}{2}\lrs{|u|^{2}+|v|^{2}}}H_{i,j}(t_{0},t,x,\omega,u,v)}_{L_{x}^{\infty}} dvdud\omega,\notag
\end{align}
where in the last inequality we have performed the collision transform $(u^{*},v^{*})\mapsto (u,v)$ and used the identity
$|u-v|=|u^{*}-v^{*}|$.

Furthermore,
using that
\begin{align*}
e^{-\nu(v^{*})(t-\tau)} \lesssim 1,\quad  \mu(u)=e^{-2|u|^{2}},\quad K_{i,j}(x)=\chi (MP_{e_{i,j}}^{\perp }x)\chi (P_{e_{i,j}}x)\lesssim\chi (MP_{e_{i,j}}^{\perp}x),
\end{align*}
together with the identity
\begin{equation*}
\begin{aligned}
&x-v^{*}(t-\tau)-v\tau =x-v^{*}t-P_{\omega}(v-u)\tau,\quad \text{$P_{\omega}$: the projection in the direction of $\omega$},
\end{aligned}
\end{equation*}
we deduce
\begin{align*}
&|u-v|e^{\frac{3}{2}\lrs{|u|^{2}+|v|^{2}}}H_{i,j}(t_{0},t,x,\omega,u,v)\\
=& \int_{t_{0}}^{t} |u-v|e^{\frac{3}{2}\lrs{|u|^{2}+|v|^{2}}}e^{-\nu(v^{*})(t-\tau)} K_{i,j}(x-v^{*}(t-\tau)-v\tau)I_{i,j}(v) \mu(u)d\tau\notag\\
\lesssim&I_{i,j}(v)e^{-\frac{|u|^{2}}{2}}|u-v|E_{i,j}(t,x,\omega,u,v),\notag
\end{align*}
where
\begin{align*}
E_{i,j}(t,x,\omega,u,v)
=:&\int_{0}^{T_{*}}\chi \left( MP_{e_{i,j}}^{\perp }(x-v^{*}t-P_{\omega}(v-u)\tau)\right) d\tau.
\end{align*}

Consequently, we arrive at
\begin{align}
&\n{\lra{v}e^{\lra{v}^{2}}D^{+}(f_{\mathrm{b}},\mu)}_{L_{v}^{1}L_{x}^{\infty}}\label{equ:estimate,D+,Eij}\\
\lesssim&
M^{1-s}\sum_{i,j} \int_{\R^{3}\times \R^{3}} I_{i,j}(v)e^{-\frac{|u|^{2}}{2}}|u-v|\lrc{ \int_{\mathbb{S}^{2}}\n{E_{i,j}(t,x,\omega,u,v)}_{L_{x}^{\infty}}d\omega} dudv.\notag
\end{align}

We now turn to the analysis of $E_{i,j}(t,x,\omega,u,v)$.
First, it admits the trivial upper bound
\begin{align}\label{equ:E,trivial bound}
E_{i,j}(t,x,\omega,u,v)\leq T_{*}\leq  1.
\end{align}
However, this trivial bound alone is insufficient to produce the desired decay, and a more refined estimate is therefore required.

To proceed, we use the following time-integration estimate
\begin{align}\label{equ:chi,inequality}
\int_{\R}\chi(\overrightarrow{n}\tau+\overrightarrow{m})d\tau=
\frac{1}{|\overrightarrow{n}|}
\int_{\R}\chi\lrs{\frac{\overrightarrow{n}}{|\overrightarrow{n}|}\tau+\overrightarrow{m}}d\tau
\lesssim \frac{1}{|\overrightarrow{n}|},
\end{align}
which effectively reduces the bound to a geometric constraint involving the projection directions.
By this time-integration estimate \eqref{equ:chi,inequality}, we arrive at
\begin{align}\label{equ:E,nontrivial bound}
E_{i,j}(t,x,\omega,u,v)=&\int_{0}^{T_{*}}\chi \left( MP_{e_{i,j}}^{\perp }(x-v^{*}t-P_{\omega}(v-u)\tau)\right)d\tau\\
\lesssim& \frac{1}{M|P_{e_{i,j}}^{\perp}P_{\omega
}(v-u)|}\notag\\
\lesssim&  \frac{1}{M\sin \phi_{i,j}(\omega)}
\frac{1}{|P_{\omega
}(v-u)|}\notag\\
\lesssim & \frac{1}{M\sin \phi_{i,j}(\omega)} \frac{1}{\cos \theta(u-v,\omega)}\frac{1}{|u-v|},\notag
\end{align}
where $\phi_{i,j}(\omega)\in [0,\pi)$ is the angle between $\omega$ and $e_{i,j}$, and $\theta(u-v,\omega)\in [0,\pi)$ be the angle between $\omega$ and $v-u$.

With these estimates for $E_{i,j}$ in hand, we proceed to the angular integration. We partition the sphere $\mathbb{S}^{2}$ through an angular decomposition and define
\begin{align*}
\Omega=\lr{\omega\in \mathbb{S}^{2}:\sin \phi_{i,j}(\omega)\leq M^{-1}}\bigcup \lr{\omega\in \mathbb{S}^{2}:\abs{\cos \theta(u-v,\omega)}\leq M^{-\frac{1}{2}}},
\end{align*}
with $\Omega^{c}$ denoting the complementary set of $\Omega$. Then, the integral can be split into two parts
\begin{align*}
\int_{\mathbb{S}^{2}} \n{E_{i,j}(t,x,\omega,u,v)}_{L_{x}^{\infty}}d\omega
\leq A_{1}+A_{2},
\end{align*}
where
\begin{align*}
A_{1}=&\int_{\Omega}\n{E_{i,j}(t,x,\omega,u,v)}_{L_{x}^{\infty}}d\omega,\\
 A_{2}=&\int_{\Omega^{c}}\n{E_{i,j}(t,x,\omega,u,v)}_{L_{x}^{\infty}}d\omega.
\end{align*}

 For $A_{1}$, applying the trivial bound \eqref{equ:E,trivial bound} on the set $\Omega$ and utilizing spherical coordinates, we obtain
\begin{align}\label{equ:Eij,A1,estimate}
&\int_{\Omega}\n{E_{i,j}(t,x,\omega,u,v)}_{L_{x}^{\infty}}d\omega\\
 \leq& \int_{\lr{\omega\in \mathbb{S}^{2}:\sin \phi_{i,j}(\omega)\leq M^{-1}}}1d\omega+\int_{\lr{\omega\in \mathbb{S}^{2}:\abs{\cos \theta(u-v,\omega)}\leq M^{-\frac{1}{2}}}}1d\omega\notag\\
\lesssim& \int_{0}^{\pi}1_{\lr{\sin \phi\leq M^{-1}}}\sin \phi d\phi+
\int_{0}^{\pi}1_{\lr{|\cos\theta|\leq M^{-\frac{1}{2}}}}\sin \theta d\theta\notag\\
\lesssim& M^{-2}+M^{-\frac{1}{2}}\lesssim M^{-\frac{1}{2}}.\notag
\end{align}

For $A_{2}$, applying the nontrivial upper bound \eqref{equ:E,nontrivial bound}, we get
\begin{align}
&
\int_{\Omega^{c}}\n{E_{i,j}(t,x,\omega,u,v)}_{L_{x}^{\infty}}d\omega\notag\\
\leq&\frac{1}{M|u-v|}\int_{\Omega^{c}} \frac{1}{\sin \phi_{i,j}(\omega)\cos \theta(u-v,\omega)}d\omega\notag\\
\lesssim & \frac{1}{M |u-v|}\lrc{\int_{\Omega^{c}}\frac{1}{(\sin \phi_{i,j}(\omega))^{2}}d\omega+\int_{\Omega^{c}}\frac{1}{(\cos \theta(u-v,\omega))^{2}}d\omega}.\label{equ:estimate,Eij,Omegac,two terms}
\end{align}
For the last two terms on the right hand side of \eqref{equ:estimate,Eij,Omegac,two terms}, by surface integral formula, we derive
\begin{align}\label{equ:Eij,A2,estimate,sin}
\int_{\Omega^{c}}\frac{1}{(\sin \phi_{i,j}(\omega))^{2}}d\omega\lesssim &
\int_{0}^{\pi}1_{\lr{\sin \phi\geq M^{-1}}}\frac{\sin \phi}{(\sin \phi)^{2}}d\phi
\lesssim\int_{M^{-1}}^{1}\frac{1}{\phi}d\phi \lesssim \ln M,
\end{align}
\begin{align}\label{equ:Eij,A2,estimate,cos}
\int_{\Omega^{c}}\frac{1}{(\cos \theta(u-v,\omega))^{2}}d\omega\lesssim & \int_{0}^{\pi}1_{\lr{|\cos\theta|\geq M^{-\frac{1}{2}}}}\frac{\sin \theta}{(\cos \theta)^{2}} d\theta
\lesssim \int_{M^{-\frac{1}{2}}}^{1}\frac{1}{\la^{2}}d\la\lesssim M^{\frac{1}{2}}.
\end{align}

Putting together \eqref{equ:Eij,A1,estimate}, \eqref{equ:estimate,Eij,Omegac,two terms}, \eqref{equ:Eij,A2,estimate,sin}, \eqref{equ:Eij,A2,estimate,cos}, we arrive at
\begin{align}\label{equ:Sij,estimate,D+}
\int_{\mathbb{S}^{2}}\n{E_{i,j}(t,x,\omega,u,v)}_{L_{x}^{\infty}}d\omega \lesssim \frac{1}{M^{\frac{1}{2}}|u-v|}.
\end{align}
Substituting this bound \eqref{equ:Sij,estimate,D+} back into the estimate \eqref{equ:estimate,D+,Eij} for $D^{+}(f_{\mathrm{b}},\mu)$, we have
\begin{align*}
&\n{e^{\lra{v}^{2}}D^{+}(f_{\mathrm{b}},\mu)(f_{\mathrm{b}},\mu)}_{L_{v}^{1,1}L_{x}^{\infty }}\\
\lesssim&  M^{1-s}\sum_{i,j}\int_{\R^{3}\times \R^{3}}
I_{i,j}(v) e^{-\frac{|u|^{2}}{2}}|u-v|\frac{1}{M^{\frac{1}{2}}|u-v|}dudv\\
\lesssim&   M^{1-s}M^{-\frac{1}{2}}\sum_{i,j}\int_{\R^{3}\times \R^{3}}I_{i,j}(v) e^{-\frac{|u|^{2}}{2}}
dudv\\
\lesssim& M^{\frac{1}{2}-s},
\end{align*}
where in the last inequality we have used the summation estimate \eqref{equ:Iij,sum} for $I_{ij}$ that
\begin{align*}
\sum_{i,j}I_{i,j}(v)\leq& 1_{\lr{\frac{3}{4}\leq|v|\leq \frac{5}{4}}}(v).
\end{align*}
Therefore, we complete the proof of the desired estimate \eqref{equ:Z norm,Q+,mu,fb}.
\end{proof}

\subsection{$Z$-norm Estimates for Correction Terms}\label{section:Z-norm Estimates for Correction Terms}
In this section, the main goal is to derive uniform-in-time $Z$-norm estimates for the correction terms $f_{\mathrm{c}}(t)$. In addition, we construct a second solution
$g(t)$, which is stable near the initial datum $f_{\mathrm{r}}(0)$.

We begin by providing a space-time bilinear estimate.
\begin{proposition}\label{lemma:Z,norm,bilinear,estimate}
 Let $0\leq t_{0}\leq t\leq \frac{1}{\kappa}$. Then the following estimates in time-integral form hold
\begin{align}\label{equ:Z,norm,bilinear,estimate}
&\bbn{e^{(1-\kappa t)\lra{v}^{2}}\int_{t_{0}}^{t}S_{\nu}(t-\tau)Q^{\pm}(f_{1},f_{2})(\tau)d\tau}_{Z}\\
\lesssim & \lrs{\frac{t-t_{0}}{\kappa}}^{\frac{1}{2}}\n{e^{(1-\kappa \tau)\lra{v}^{2}}f_{1}}_{L_{\tau}^{\infty}(t_{0},t;Z)}
\n{e^{(1-\kappa \tau)\lra{v}^{2}}f_{2}}_{L_{\tau}^{\infty}(t_{0},t;Z)},\notag
\end{align}
\begin{align}\label{equ:Z,norm,bilinear,estimate,Kmu,f}
\bbn{e^{(1-\kappa t)\lra{v}^{2}}\int_{t_{0}}^{t}S_{\nu}(t-\tau)K_{\mu}f(\tau)d\tau}_{Z}
\lesssim & \lrs{\frac{t-t_{0}}{\kappa}}^{\frac{1}{2}}\n{e^{(1-\kappa \tau)\lra{v}^{2}}f}_{L_{\tau}^{\infty}(t_{0},t;Z)}.
\end{align}
\end{proposition}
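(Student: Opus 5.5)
The plan is the time-decaying Gaussian weight trick used in \cite{CGSZ26} (and earlier in Arsénio/Alonso-type arguments). The weight $e^{(1-\kappa t)\lra{v}^{2}}$ loses exactly $\kappa(t-\tau)\lra{v}^{2}$ between times $\tau$ and $t$, and this loss is what absorbs the velocity growth $\lra{v}$ coming from the hard-sphere kernel. We push the weight through the free flow: since $S_{\nu}(t-\tau)$ is multiplication by $e^{-\nu(t-\tau)}\leq 1$ composed with the translation $x\mapsto x-v(t-\tau)$, it commutes with functions of $v$ and with $\nabla_{x}$. It is an isometry on every $L_{v}^{p}L_{x}^{q}$ used in the $Z$-norm, and acts as a contraction after the damping. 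Hence
\begin{align*}
\bbn{e^{(1-\kappa t)\lra{v}^{2}}\int_{t_{0}}^{t}S_{\nu}(t-\tau)G(\tau)d\tau}_{Z}\leq \int_{t_{0}}^{t}\bn{e^{-\kappa(t-\tau)\lra{v}^{2}}e^{(1-\kappa\tau)\lra{v}^{2}}G(\tau)}_{Z}d\tau .
\end{align*}

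Next, we bound the collision term at fixed $\tau$. Set $\beta=1-\kappa\tau$ and $F_{i}=e^{\beta\lra{v}^{2}}f_{i}$. Energy conservation gives $\lra{v}^{2}+\lra{u}^{2}=\lra{v^{*}}^{2}+\lra{u^{*}}^{2}$, so $e^{\beta\lra{v}^{2}}\leq e^{\beta\lra{v^{*}}^{2}}e^{\beta\lra{u^{*}}^{2}}$ for the gain term; for the loss term it holds trivially after multiplying by $e^{\beta\lra{u}^{2}}\geq1$. Thus $e^{\beta\lra{v}^{2}}|Q^{\pm}(f_{1},f_{2})|\leq Q^{\pm}(|F_{1}|,|F_{2}|)$. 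Now insert the extra factor $e^{-\kappa(t-\tau)\lra{v}^{2}}$ and split the kernel $|u-v|\lesssim\lra{v}+\lra{u}$.

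The $\lra{u}$ part is harmless. It is estimated by \eqref{equ:collision,estimate,Q-,Lr}--\eqref{equ:collision,estimate,Q+,Lr,gr} with $r=2$ and $r=1$, placing the $L_{v}^{1,1}L_{x}^{\infty}$ component of $Z$ on one factor and the $L_{v}^{2,1}L_{x}^{2}$ (respectively $L_{v}^{1,1}L_{x}^{\infty}$) component on the other. Derivatives are handled by Leibniz: $\nabla_{x}$ falls on one factor, and the weights $M^{s_{0}-1}$, $M^{-1}$ match. This gives $\n{Q^{\pm}(F_{1},F_{2})}_{Z}\lesssim\n{\lra{v}F_{1}}_{Z}\n{F_{2}}_{Z}$ up to symmetric terms, with one extra power $\lra{v}$ (or $\lra{u}$, $\lra{v^{*}}$) left over.

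The crux is that one extra power of $\lra{v}$. For the loss term it sits on the output variable $v$. We absorb it with $\lra{v}e^{-\kappa(t-\tau)\lra{v}^{2}}\lesssim(\kappa(t-\tau))^{-1/2}$, and then $\int_{t_{0}}^{t}(\kappa(t-\tau))^{-1/2}d\tau\lesssim((t-t_{0})/\kappa)^{1/2}$, which is exactly the claimed factor. For the gain term the growth sits on a pre-collisional variable, and I expect this step to be the main obstacle. There we must instead split the weight loss itself. Writing $\tau$-weights as $e^{(1-\kappa\tau)\lra{\cdot}^{2}}$ on the inputs, we reassign a fraction: we estimate in terms of $e^{(1-\kappa\tau)\lra{\cdot}^{2}}f_{i}$ but evaluate the output weight at time $t$. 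The conservation identity then leaves $e^{-\kappa(t-\tau)(\lra{v^{*}}^{2}+\lra{u^{*}}^{2})}$ on the inputs, since the output weight at time $t$ is dominated by the input weights at time $t$. This produces $\lra{v^{*}}e^{-\kappa(t-\tau)\lra{v^{*}}^{2}}\lesssim(\kappa(t-\tau))^{-1/2}$ on the input side before applying the $L^{r}$ gain estimates.

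For \eqref{equ:Z,norm,bilinear,estimate,Kmu,f}, we apply the same argument with $f_{2}=\mu$ (or $f_{1}=\mu$), using $K_{\mu}f=Q(f,\mu)+Q^{+}(\mu,f)$. The weighted $Z$-norm of $\mu$ is finite because $e^{\lra{v}^{2}}\mu$ is Gaussian-decaying and independent of $x$. The $L_{x}^{2}$ pieces of the $Z$-norm then only need $\mu\in L_{v}^{1,1}$, as in Lemma \ref{lemma:weighted,Z-norm,estimates}, and the constraint $t\leq1/\kappa$ ensures $1-\kappa t\geq0$ throughout.
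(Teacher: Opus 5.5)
Your proof is correct, and its skeleton is the paper's: commute the weight through $S_{\nu}(t-\tau)$, apply Minkowski in $\tau$, pay for exactly one power of $\langle\cdot\rangle$ with the weight loss $e^{-\kappa(t-\tau)\langle\cdot\rangle^{2}}$, and integrate $(\kappa(t-\tau))^{-1/2}$. The real difference is the gain term.

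The paper proves only the $K_{\mu}$ bound (the bilinear one is quoted from \cite{CGSZ26}). There it absorbs on the output via $e^{-\kappa(t-\tau)\langle v\rangle^{2}}\lesssim(\kappa(t-\tau))^{-1/2}\langle v\rangle^{-1}$. This $\langle v\rangle^{-1}$ just cancels the weight $\langle v\rangle$ built into the $L^{2,1}_v$ and $L^{1,1}_v$ components of $Z$. The same trade closes the bilinear gain term too. After it, $e^{(1-\kappa\tau)\langle v\rangle^{2}}|Q^{+}(f_1,f_2)|\le Q^{+}(|F_1|,|F_2|)$ and \eqref{equ:collision,estimate,Q+,Lr}--\eqref{equ:collision,estimate,Q+,Lr,gr} put exactly $L^{2,1}_vL^{2}_x$ on one input and $L^{1,1}_vL^{\infty}_x$ on the other.

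So the step you flag as the main obstacle is not one. The surplus power is the $Z$-norm's own output weight, and it can be traded at $v$ for $Q^{+}$ exactly as for $Q^{-}$. Your alternative is valid and gives the same factor: you move the time-$t$ weight onto $(v^{*},u^{*})$ using $\langle v\rangle^{2}\le\langle v^{*}\rangle^{2}+\langle u^{*}\rangle^{2}$ and $1-\kappa t\ge0$. The cost is redistributing the output weight by $\langle v\rangle\le\langle v^{*}\rangle+\langle u^{*}\rangle$ and running the gain estimates with $\langle\cdot\rangle^{2}$ on one input. The Gaussian decay it leaves on both inputs is not needed here. The preliminary split $|u-v|\lesssim\langle u\rangle+\langle v\rangle$ is also superfluous, since the hard-sphere estimates already carry these weights.

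One correction in the $K_{\mu}$ part: the $Z$-norm of $\mu$ is infinite, not finite. Since $\mu$ is constant in $x$, $\|\mu\|_{L^{2}_x(\mathbb{R}^{3})}=\infty$, so \eqref{equ:Z,norm,bilinear,estimate} cannot be applied with $f_2=\mu$ as written. What your last sentence actually uses, correctly, is that when rerunning the proof $\mu$ can always be put in the $L^{1,1}_vL^{\infty}_x$ slot. That is true for the terms $Q^{\pm}(f,\mu)$ and $Q^{+}(\mu,f)$ of \eqref{equ:notation,Kmu}.

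It is not true for a loss term $Q^{-}(\mu,f)$, which is what the linearization actually produces (compare $F^{(2)}_{\mathrm{err}}$ and Lemma \ref{lemma:weighted,Z-norm,estimates}). There $f$ is integrated in $u$, so the $L^{2}_x$ component needs $\|\langle u\rangle f\|_{L^{2}_xL^{1}_u}$. The $Z$-norm controls this only through the Gaussian weight, with a loss of order $(1-\kappa\tau)^{-3/4}$. So your argument would not yield the factor $((t-t_0)/\kappa)^{1/2}$ for that term.
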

\begin{proof}
The space-time bilinear estimate \eqref{equ:Z,norm,bilinear,estimate} in the Gaussian-weighted
$Z$-norm has been established for the free transport $S(t)$ in \cite[Proposition 5.1]{CGSZ26}. We note that the same proof applies to the damped case $S_{\nu}(t)$, since the additional damped factor $e^{-\nu t}$ can be discarded directly. Therefore, it remains only to prove \eqref{equ:Z,norm,bilinear,estimate,Kmu,f}.

We first derive a pointwise bound in $L_{x}^{p}$ norm. Discarding the damped transport operator $S_{\nu}(t-\tau)$, we obtain
\begin{align}\label{equ:space-time,pointwise,K,mu}
&\bbn{e^{(1-\kappa t)\lra{v}^{2}}
\int_{t_{0}}^{t}S_{\nu}(t-\tau)K_{\mu}f(\tau)d\tau}_{L_{x}^{p}}\\
\leq &\int_{t_{0}}^{t}e^{(1-\kappa t)\lra{v}^{2}}\n{K_{\mu}f(\tau)}_{L_{x}^{p}}d\tau\notag\\
= &\int_{t_{0}}^{t}e^{\kappa (\tau-t)\lra{v}^{2}}e^{(1-\kappa \tau)\lra{v}^{2}}\n{K_{\mu}f(\tau)}_{L_{x}^{p}}d\tau\notag\\
\lesssim &
\int_{t_{0}}^{t}\frac{1}{\sqrt{\ka(t-\tau)}}\n{\lra{v}^{-1}e^{(1-\kappa \tau)\lra{v}^{2}}K_{\mu}f(\tau)}_{L_{x}^{p}}d\tau,\notag
\end{align}
where in the last inequality we have used that $e^{-a\lra{v}^{2}}\lesssim \frac{1}{\sqrt{a}\lra{v}} $.

We now turn to the estimates in the $Z$-norm. Following the proof of Lemma \ref{lemma:weighted,Z-norm,estimates},
we have
\begin{align}\label{equ:Kmu,Z,v}
\n{\lra{v}^{-1}e^{(1-\kappa \tau)\lra{v}^{2}}K_{\mu}f(\tau)}_{Z}\lesssim \n{e^{(1-\kappa \tau)\lra{v}^{2}}f(\tau)}_{Z}.
\end{align}
Combining \eqref{equ:space-time,pointwise,K,mu} and \eqref{equ:Kmu,Z,v}, we arrive at
\begin{align*}
\bbn{e^{(1-\kappa t)\lra{v}^{2}}
\int_{t_{0}}^{t}S_{\nu}(t-\tau)K_{\mu}f(\tau)d\tau}_{Z}
\lesssim&
\int_{t_{0}}^{t}\frac{1}{\sqrt{\ka(t-\tau)}}\n{\lra{v}^{-1}e^{(1-\kappa \tau)\lra{v}^{2}}K_{\mu}f(\tau)}_{Z}d\tau\\
\lesssim& \lrs{\frac{t-t_{0}}{\kappa}}^{\frac{1}{2}}\n{e^{(1-\kappa \tau)\lra{v}^{2}}f}_{L_{\tau}^{\infty}(t_{0},t;Z)},
\end{align*}
which completes the proof of \eqref{equ:Z,norm,bilinear,estimate,Kmu,f}.
\end{proof}

Next, we establish uniform-in-time $Z$-norm estimates for the correction term $f_{\mathrm{c}}(t)$,
which is governed by the following equation
\begin{equation}\label{equ:correction term,fc,stability}
\left\{
\begin{aligned} \pa_{t}f_{\mathrm{c}} + v\cdot \nabla_{x} f_{\mathrm{c}}+\nu f_{\mathrm{c}}
=&\pm Q^\pm(f_{\mathrm{c}},f_{ \mathrm{a}}) \pm
Q^\pm(f_{\mathrm{a}},f_{\mathrm{c}}) \pm Q^\pm(f_{\mathrm{c}},f_{\mathrm{c}
})+ K_{\mu}f_{\mathrm{c}}-F_{\text{err}}^{(1)}-F_{\text{err}}^{(2)},\\
F_{\mathrm{err}}^{(1)}=&\pa_{t}f_{\mathrm{a}}+v\cdot \nabla_{x}f_{\mathrm{a}}+Q^{-}(f_{\mathrm{a}},f_{\mathrm{a}})-Q^{+}(f_{\mathrm{a}},f_{\mathrm{a}})+\nu f_{\mathrm{b}},\\
F_{\mathrm{err}}^{(2)}
=&\mp Q^{\pm}(f_{\mathrm{r}},\mu)\pm Q^{\pm}(\mu,f_{\mathrm{r}})-Q^{+}(f_{\mathrm{b}},\mu)-Q^{+}(\mu,f_{\mathrm{b}})+Q^{-}(\mu,f_{\mathrm{b}}),\\
f_{\mathrm{c}}(0)=&0.
 \end{aligned}
 \right.
\end{equation}
In addition, in order to compare two distinct solutions, we also construct a second solution $g(t)=g_{\mathrm{a}}(t)+g_{\mathrm{c}}(t)$, whose approximate part is taken to be the stationary state $f_{\mathrm{r}}(0)$, namely,
$$g_{\mathrm{a}}(t)\equiv f_{\mathrm{r}}(0).$$
Correspondingly, the correction term $g_{\mathrm{c}}(t)$ satisfies
\begin{equation}\label{equ:correction term,gc,stability}
\left\{
\begin{aligned} \pa_{t}g_{\mathrm{c}} + v\cdot \nabla_{x} g_{\mathrm{c}}+\nu g_{\mathrm{c}}
=&\pm Q^\pm(g_{\mathrm{c}},g_{ \mathrm{a}}) \pm
Q^\pm(g_{\mathrm{a}},g_{\mathrm{c}}) \pm Q^\pm(g_{\mathrm{c}},g_{\mathrm{c}
})+ K_{\mu}g_{\mathrm{c}}-G_{\text{err}},\\
G_{\mathrm{err}}=&\pa_{t}g_{\mathrm{a}}+v\cdot \nabla_{x}g_{\mathrm{a}}+Q^{-}(g_{\mathrm{a}},g_{\mathrm{a}})-Q^{+}(g_{\mathrm{a}},g_{\mathrm{a}})+\nu g_{\mathrm{a}}-K_{\mu}g_{\mathrm{a}},\\
g_{\mathrm{c}}(0)=&0.
 \end{aligned}
 \right.
\end{equation}

This choice of $g_{\mathrm{a}}(t)$ ensures that the two exact solutions $f(t)$ and $g(t)$ are initially close in the strong weighted norm.
In addition, based on the uniform estimates for the correction terms $f_{\mathrm{c}}(t)$ and $g_{\mathrm{c}}(t)$, the norm deflation property previously established for $f_{\mathrm{a}}(t)$, ensures that the two solutions become fully separated at the critical time $T_{*}$.

To obtain the uniform estimates for $f_{\mathrm{c}}(t)$ and $g_{\mathrm{c}}(t)$ on $[0,T_{*}]$,
we carry out an iterative scheme based on the space-time collision estimates in the Gaussian-weighted $Z$-norm,
established in Proposition \ref{lemma:Z,norm,bilinear,estimate}, the
$Z$-norm estimates for $f_{\mathrm{a}}(t)$ and $f_{\mathrm{r}}(t)$, and the uniform-in-time estimates for the error terms from Section \ref{section:Estimates for Error Terms}.

\begin{proposition}\label{lemma:perturbation}
Let $T_{*}=M^{s-1}(\ln\ln M)$ and $\kappa=\frac{1}{T_{*}}$.
 Then, we have
\begin{align}
\sup_{t\in[0,T_{*}]}\n{e^{(1-\kappa t)\lra{v}^{2}}f_{\mathrm{c}}(t)}_{Z}\lesssim& M^{-\frac{\delta}{2}},  \label{E:fc_bound2}\\
\sup_{t\in[0,T_{*}]}\n{e^{(1-\kappa t)\lra{v}^{2}}g_{\mathrm{c}}(t)}_{Z}\lesssim& M^{-\frac{\delta}{2}}.  \label{E:gc_bound2}
	\end{align}
In particular, we have
\begin{align}
\sup_{t\in[0,T_{*}]}\n{e^{(1-\kappa t)\lra{v}^{2}}f_{\mathrm{c}}(t)}_{L_{v}^{2} H_{x}^{s_{0}}}\lesssim& M^{-\frac{\delta}{2}},  \label{E:fc_bound2,sobolev}\\
\sup_{t\in[0,T_{*}]}\n{e^{(1-\kappa t)\lra{v}^{2}}g_{\mathrm{c}}(t)}_{L_{v}^{2} H_{x}^{s_{0}}}\lesssim& M^{-\frac{\delta}{2}}.  \label{E:gc_bound2,sobolev}
\end{align}
\end{proposition}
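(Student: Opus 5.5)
We close a bootstrap/contraction argument for the Duhamel form of \eqref{equ:correction term,fc,stability} on $[0,T_*]$ in the weighted space
\begin{align*}
X=\Big\{f:\ \n{f}_{X}:=\sup_{t\in[0,T_{*}]}\n{e^{(1-\kappa t)\lra{v}^{2}}f(t)}_{Z}<\infty\Big\},\qquad \kappa=T_*^{-1}.
\end{align*}
Write
\begin{align*}
f_{\mathrm{c}}(t)=\int_{0}^{t}S_{\nu}(t-\tau)\Big[\pm Q^{\pm}(f_{\mathrm{c}},f_{\mathrm{a}})\pm Q^{\pm}(f_{\mathrm{a}},f_{\mathrm{c}})\pm Q^{\pm}(f_{\mathrm{c}},f_{\mathrm{c}})+K_{\mu}f_{\mathrm{c}}-F^{(1)}_{\mathrm{err}}-F^{(2)}_{\mathrm{err}}\Big](\tau)\,d\tau=:\Phi(f_{\mathrm{c}})(t).
\end{align*}
Since $e^{(1-\kappa t)\lra{v}^{2}}\le e^{\lra{v}^{2}}$, Proposition \ref{lemma:bounds on ferr} bounds the forcing part of $\Phi$ in $X$ by $CM^{-\delta}$.

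The linear terms need more care. Proposition \ref{lemma:Z,norm,bilinear,estimate} gives the factor $((t-t_0)/\kappa)^{1/2}=(t-t_0)^{1/2}T_*^{1/2}$. This is at most $T_*\ll1$ over the whole interval when $t_0=0$, because $T_*=M^{s-1}\ln\ln M\to0$. The $K_{\mu}f_{\mathrm{c}}$ term therefore contributes $\lesssim T_*\n{f_{\mathrm{c}}}_X$ by \eqref{equ:Z,norm,bilinear,estimate,Kmu,f}, which is absorbable.

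The terms $Q^{\pm}(f_{\mathrm{c}},f_{\mathrm{a}})$ and $Q^{\pm}(f_{\mathrm{a}},f_{\mathrm{c}})$ require a bound on $\sup_t\n{e^{(1-\kappa t)\lra{v}^2}f_{\mathrm{a}}}_Z$. For $f_{\mathrm{r}}$ this is $\lesssim1$ by \eqref{equ:z-norm,fr,used}. For $f_{\mathrm{b}}$ it is $\lesssim M^{1-s}$, since the $L^{1,1}_vL^\infty_x$ part is of size $M^{1-s}$ by \eqref{equ:sobolev norm,fb,LvpLxinf}, while the $L^2$ parts are smaller. A direct bilinear bound $T_*\cdot M^{1-s}=\ln\ln M$ is not small, so this is the \textbf{main obstacle}.

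I would handle it as in \cite[Proposition 5.1]{CGSZ26}: split $[0,T_*]$ into $K\sim(\ln\ln M)^2$ subintervals $[t_k,t_{k+1}]$ of length $T_*/K$. On each one the coefficient becomes
\begin{align*}
C\,(T_*/K)^{1/2}T_*^{1/2}\,M^{1-s}\sim C(\ln\ln M)K^{-1/2}\le\tfrac14 .
\end{align*}
The Duhamel integral from $0$ to $t$ is then written as the propagated value at $t_k$ plus the local integral over $[t_k,t]$.

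A subtle point is that the propagated part $S_{\nu}(t-t_k)f_{\mathrm{c}}(t_k)$ does not grow in the $Z$-norm with the decreasing weight. Transport preserves every $L^p_x$ norm, the factor $e^{-\nu t}\le1$, and the weight $e^{(1-\kappa t)\lra{v}^2}$ is nonincreasing in $t$, so this part is controlled by $\n{e^{(1-\kappa t_k)\lra{v}^2}f_{\mathrm{c}}(t_k)}_Z$.

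Alternatively, the $f_{\mathrm{b}}$ interaction terms can be treated as in the error analysis, using the velocity-localized transport gain: $f_{\mathrm{b}}$ has tubes of width $M^{-1}$ moving with speed $\sim1$, so time integration gives $M^{-1}$ and beats $M^{1-s}$. If I use this route, I would import that sharper bound from \cite{CGSZ26} rather than the crude one.

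Iterating over the $K$ subintervals gives $\n{f_{\mathrm{c}}}_{X}\le C^{K}M^{-\delta}$. This is still $\lesssim M^{-\delta/2}$ because $C^{(\ln\ln M)^2}=M^{o(1)}$. The quadratic term is then closed by the a priori assumption $\n{f_{\mathrm{c}}}_X\le M^{-\delta/2}$, since its contribution is $\lesssim T_*M^{-\delta}$. Contraction of $\Phi$ on a ball of $X$ follows from the same bilinear estimates applied to differences, which gives existence, uniqueness and \eqref{E:fc_bound2}.

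For $g_{\mathrm{c}}$ the structure is the same but simpler: $g_{\mathrm{a}}=f_{\mathrm{r}}(0)$ has $Z$-norm $O(1)$, so no subdivision is needed. The only new input is the bound on $G_{\mathrm{err}}$. Its term $v\cdot\nabla_xf_{\mathrm{r}}(0)$ is handled as in \cite{CGSZ26}, with the $N^{-1}$ gain of \eqref{equ:z-norm,lv1,used}, \eqref{equ:z-norm,lv1,H1,used} and $M^{-1}$ compensation. The quadratic terms $Q^\pm(f_{\mathrm{r}}(0),f_{\mathrm{r}}(0))$ gain $N^{-1}$ from the $L^1_v$ norm. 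The linear terms $\nu f_{\mathrm{r}}(0)-K_\mu f_{\mathrm{r}}(0)$ are bounded by Lemma \ref{lemma:Z-norm estimate,fr,mu} as $\lesssim T_*$. Together these give \eqref{E:gc_bound2}.

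Finally, \eqref{E:fc_bound2,sobolev} and \eqref{E:gc_bound2,sobolev} follow by interpolation between the $L^{2}_x$ and $\nabla_x L^2_x$ components of $Z$:
\begin{align*}
\n{h}_{L_v^2H_x^{s_0}}\lesssim\n{h}_{L^2_vL^2_x}^{1-s_0}\n{h}_{L^2_v\dot H^1_x}^{s_0}+\n{h}_{L^2_vL^2_x}\lesssim \n{h}_Z .
\end{align*}
Here $M^{s_0}$ and $M^{s_0-1}$ are exactly the weights that make this scale-invariant.
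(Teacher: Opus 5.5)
Your proposal is correct and follows essentially the paper's argument. You subdivide $[0,T_*]$ so that the $f_{\mathrm{a}}$-linear coefficient $(|I|/\kappa)^{1/2}M^{1-s}$ becomes absorbable, propagate through the nonincreasing weight $e^{(1-\kappa t)\lra{v}^{2}}$, accept a geometric loss $C^{K}=M^{o(1)}$ over the $K$ subintervals, and interpolate inside the $Z$-norm; using $(\ln\ln M)^{2}$ subintervals instead of the paper's $\sqrt{\ln M}\,\ln\ln M$, and noting that $g_{\mathrm{c}}$ needs no subdivision, are harmless refinements. The one point to tidy is existence: on all of $[0,T_*]$ the map $\Phi$ has Lipschitz constant $\sim\ln\ln M$ in $X$ because of the $Q^{\pm}(\cdot,f_{\mathrm{a}})$ terms, so the contraction must be run subinterval by subinterval, as in your a priori scheme, rather than on a single ball of $X$.
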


\begin{proof}
Following a standard interpolation argument, we have
\begin{align*}
\n{f}_{L_{v}^{2}H_{x}^{s_{0}}}\leq \n{f}_{L_{v}^{2}L_{x}^{2}}^{1-s_{0}}\n{\nabla_{x}f}_{L_{v}^{2}L_{x}^{2}}^{s_{0}}
\leq
\lrs{M^{s_{0}}\n{f}_{L_{v}^{2}L_{x}^{2}}}^{1-s_{0}} \lrs{M^{s_{0}-1}\n{\nabla_{x}f}_{L_{v}^{2}L_{x}^{2}}}^{s_{0}}\leq&\n{f}_{Z}.
\end{align*}
Therefore, we need only consider the $Z$-norm estimates \eqref{E:fc_bound2}--\eqref{E:gc_bound2}.

We first prove the estimate \eqref{E:fc_bound2} for $f_{\mathrm{c}}(t)$. The argument for $g_{\mathrm{c}}(t)$ is analogous and will be summarized at the end.
We partition the time interval $[0,T_{*}]$ as
\begin{equation*}
0=T_{0}<T_{1}<T_{2}<\cdots <T_{n-1}<T_{n}=T_{*}=M^{s-1}\ln\ln M,
\end{equation*}
	where
$$T_{j}= \frac{jM^{s-1}}{\sqrt{\ln M}},\quad n= \sqrt{\ln M} (\ln\ln M).$$
Consequently, each subinterval $I_{j}=[T_{j},T_{j+1}]$ is of length
\begin{equation*}
|I_{j}|=\frac{M^{s-1}}{\sqrt{\ln M}}.
\end{equation*}

	For $t\in I_{j}=[T_{j},T_{j+1}]$, using the Duhamel formula of equation \eqref{equ:correction term,fc,stability}, we have
\begin{align*}
f_{\mathrm{c}}(t)=S_{\nu}(t-T_{j})f_{\mathrm{c}}(T_{j})+
\int_{T_{j}}^{t}S_{\nu}(t-\tau)F(\tau)d\tau,
\end{align*}
where the forcing term
$F$ is defined as
$$F=\pm Q^\pm(f_{\mathrm{c}},f_{ \mathrm{a}}) \pm
Q^\pm(f_{\mathrm{a}},f_{\mathrm{c}}) \pm Q^\pm(f_{\mathrm{c}},f_{\mathrm{c}
})+ K_{\mu}f_{\mathrm{c}}-F_{\text{err}}^{(1)}-F_{\text{err}}^{(2)}.$$

	Applying the
space-time collision estimates \eqref{equ:Z,norm,bilinear,estimate} and \eqref{equ:Z,norm,bilinear,estimate,Kmu,f} from Proposition \ref{lemma:Z,norm,bilinear,estimate}, we obtain
\begin{align}\label{equ:fc,Z-norm,gauss}
		&\n{e^{(1-\kappa t)\lra{v}^{2}}f_{\mathrm{c}}}_{L_{t}^{\infty}(I_{j};Z)}\\
 \leq& \n{e^{(1-\kappa T_{j})\lra{v}^{2}}f_{\mathrm{c}}(T_{j})}_{Z}
+\bbn{e^{(1-\kappa t)\lra{v}^{2}}\int_{T_{j}}^{t}S_{\nu}(t-\tau)F(\tau)d\tau}_{L_{t}^{\infty}(I_{j};Z)}\notag \\
		 \lesssim & \n{e^{(1-\kappa T_{j})\lra{v}^{2}}f_{\mathrm{c}}(T_{j})}_{Z}+ \lrs{\frac{|I_{j}|}{\kappa}}^{\frac{1}{2}}
\n{e^{(1-\kappa t)\lra{v}^{2}}f_{\mathrm{a}}}_{L_{t}^{\infty}(I_{j};Z)}  \n{e^{(1-\kappa t)\lra{v}^{2}}f_{\mathrm{c}}}_{L_{t}^{\infty}(I_{j};Z)}\notag\\
&+\lrs{\frac{|I_{j}|}{\kappa}}^{\frac{1}{2}}\n{e^{(1-\kappa t)\lra{v}^{2}}f_{\mathrm{c}}}_{L_{t}^{\infty}(I_{j};Z)}^{2}
+\lrs{\frac{|I_{j}|}{\kappa}}^{\frac{1}{2}}\n{e^{(1-\kappa t)\lra{v}^{2}}f_{\mathrm{c}}}_{L_{t}^{\infty}(I_{j};Z)}		\notag \\
&+\sum_{i=1,2}\bbn{e^{(1-\kappa t)\lra{v}^{2}}\int_{T_{j}}^{t}S_{\nu}(t-\tau)F_{\mathrm{err}}^{(i)}(\tau)d\tau}_{L_{t}^{\infty}(I_{j};Z)}.\notag
	\end{align}
With the parameter choice $\frac{1}{\kappa}=T_{*}=M^{s-1}\ln\ln M$, the coefficient in \eqref{equ:fc,Z-norm,gauss} is bounded by
\begin{align}\label{equ:Ij,kappa}
\lrs{\frac{|I_{j}|}{\kappa}}^{\frac{1}{2}}\lesssim \frac{M^{s-1}(\ln \ln M)^{\frac{1}{2}}}{(\ln M)^{\frac{1}{4}}} \lesssim \frac{M^{s-1}}{(\ln M)^{\frac{1}{10}}}.
\end{align}

For $f_{\mathrm{a}}(t)$, by Lemmas \ref{lemma:Hs,bounds on fb} and \ref{lemma:bounds on fr}, we provides the $Z$-norm estimate
\begin{align}\label{equ:z-norm estimate for fa,fc,proof}
\n{e^{\lra{v}^{2}}f_{\mathrm{a}}(t)}_{L_{t}^{\infty}(0,T_{*};Z)}\lesssim M^{1-s}.
\end{align}

For the error terms $F_{\mathrm{err}}^{(1)}$ and $F_{\mathrm{err}}^{(2)}$, we employ the uniform-in-time decay estimates \eqref{equ:Ferr_bound,1} and \eqref{equ:Ferr_bound,2} from Proposition \ref{lemma:bounds on ferr}, which gives
\begin{equation}\label{equ:Ferr_bound,fc,proof}
\sum_{i=1,2}\bbn{e^{\lra{v}^{2}}\int_{t_{0}}^{t}S_{\nu}(t-\tau)F_{\mathrm{err}
}^{(i)}(\tau)\,d\tau}_{Z}\lesssim M^{-\delta},\quad 0\leq t_{0}\leq t\leq T_{*}.
\end{equation}

Inserting the estimates \eqref{equ:Ij,kappa}, \eqref{equ:z-norm estimate for fa,fc,proof}, and \eqref{equ:Ferr_bound,fc,proof} into \eqref{equ:fc,Z-norm,gauss}, we obtain
\begin{align*}
		\n{e^{(1-\kappa t)\lra{v}^{2}}f_{\mathrm{c}}}_{L_{t}^{\infty}(I_{j};Z)}\leq& \n{e^{(1-\kappa T_{j})\lra{v}^{2}}f_{\mathrm{c}}(T_{j})}_{Z}
+ \frac{C}{(\ln M)^{\frac{1}{10}}}\n{e^{(1-\kappa t)\lra{v}^{2}}f_{\mathrm{c}}}_{L_{t}^{\infty}(I_{j};Z)}\\
&+ \frac{CM^{s-1}}{(\ln M)^{\frac{1}{10}}}\n{e^{(1-\kappa t)\lra{v}^{2}} f_{\mathrm{c}}}_{L_{t}^{\infty}(I_{j};Z)}^{2}+CM^{-\delta},
\end{align*}
where $C$ denotes an absolute constant. By absorbing the linear term $\n{f_{\mathrm{c}}}_{L_{t}^{\infty}(I_{j};Z)}$ on the right-hand side, we deduce
\begin{equation*}
\n{e^{(1-\kappa t)\lra{v}^{2}}f_{\mathrm{c}}}_{L_{t}^{\infty}(I_{j};Z)}\leq 2\n{e^{(1-\kappa T_{j})\lra{v}^{2}}f_{\mathrm{c}
}(T_{j})}_{Z}+2CM^{-\delta}.
\end{equation*}

We now iterate this estimate from $j=0$ to $j=n-1$. Given the initial condition $f_{\mathrm{c}}(0)=0$, we have for $j=0$
\begin{equation*}
\n{e^{(1-\kappa t)\lra{v}^{2}}f_{\mathrm{c}}}_{L_{t}^{\infty}(I_{j};Z)}\leq 2CM^{-\delta}.
\end{equation*}
Proceeding inductively for $j=1,\ldots $, we obtain
\begin{align*}
\n{e^{(1-\kappa t)\lra{v}^{2}}f_{\mathrm{c}}}_{L_{t}^{\infty}(I_{j};Z)}\leq&(2^{j+1}-1)2CM^{-\delta}.
\end{align*}

Since $j+1=n=\sqrt{\ln M}(\ln\ln M)$, we conclude
\begin{equation*}
\sup_{t\in[0,T_{*}]}\Vert e^{(1-\kappa t)\lra{v}^{2}}f_{\mathrm{c}}(t)\Vert _{Z}\leq \frac{Ce^{\sqrt{\ln M}\ln \ln M}}{M^{\delta}} = \frac{Ce^{\sqrt{\ln M}\ln \ln M}}{e^{\delta \ln M}}\leq
M^{-\frac{\delta}{2}}\ll 1.
\end{equation*}

The proof for $g_{\mathrm{c}}(t)$ follows in an analogous manner. We express $g_{\mathrm{c}}(t)$ via the Duhamel formula
\begin{align*}
g_{\mathrm{c}}(t)=S_{\nu}(t-T_{j})g_{\mathrm{c}}(T_{j})+\int_{T_{j}}^{t}S_{\nu}(t-\tau)G(\tau)d\tau,
\end{align*}
where the forcing term
$G(t)$ is given by
\begin{equation*}
\left\{
\begin{aligned}
G=&\pm Q^\pm(g_{\mathrm{c}},g_{ \mathrm{a}}) \pm
Q^\pm(g_{\mathrm{a}},g_{\mathrm{c}}) \pm Q^\pm(g_{\mathrm{c}},g_{\mathrm{c}
})+ K_{\mu}g_{\mathrm{c}}-G_{\text{err}},\\
G_{\mathrm{err}}=&\pa_{t}g_{\mathrm{a}}+v\cdot \nabla_{x}g_{\mathrm{a}}+Q^{-}(g_{\mathrm{a}},g_{\mathrm{a}})-Q^{+}(g_{\mathrm{a}},g_{\mathrm{a}})+\nu g_{\mathrm{a}}-K_{\mu}g_{\mathrm{a}}.
\end{aligned}
\right.
\end{equation*}

Adopting the iterative scheme employed for $f_{\mathrm{c}}(t)$, it remains to establish the counterparts of the key estimates \eqref{equ:z-norm estimate for fa,fc,proof} for $g_{\mathrm{a}}(t)$ and \eqref{equ:Ferr_bound,fc,proof} for $G_{\mathrm{err}}(t)$. We note that the approximate solution is the stationary state, i.e. $g_{\mathrm{a}}(t)\equiv f_{\mathrm{r}}(0)$,
and hence employ estimate \eqref{lemma:bounds on fr} from Lemma \ref{lemma:bounds on fr} to obtain
\begin{align}\label{equ:z-norm estimate for ga,gc,proof}
\n{e^{\lra{v}^{2}}g_{\mathrm{a}}(t)}_{L_{t}^{\infty}(0,T_{*};Z)}=\n{e^{\lra{v}^{2}}f_{\mathrm{r}}(0)}_{Z}\lesssim 1\lesssim M^{1-s}.
\end{align}
Furthermore, due to this fact that $g_{\mathrm{a}}(t)\equiv f_{\mathrm{r}}(0)$, the corresponding error estimates for $G_{\text{err}}$ are easier to establish than those for
the error terms $F_{\mathrm{err}}^{(1)}$ and $F_{\mathrm{err}}^{(2)}$. Consequently, repeating the analysis of Proposition \ref{lemma:bounds on ferr}, we also have
\begin{align}\label{equ:Ferr_bound,gc,proof}
&\bbn{e^{\lra{v}^{2}}\int_{t_{0}}^{t}S_{\nu}(t-\tau)G_{\mathrm{err}
}(\tau)\,d\tau}_{Z}
\lesssim M^{-\delta}.
\end{align}
With \eqref{equ:z-norm estimate for ga,gc,proof} and \eqref{equ:Ferr_bound,gc,proof} in hand, the same iterative argument applied to $g_{\mathrm{c}}(t)$ leads to the desired estimate \eqref{E:gc_bound2}. This completes the proof.

\end{proof}

\subsection{Proof of Instability}\label{section:Proof of Instability}
We are now ready to present the proof of the instability result in Theorem \ref{thm:main theorem,instability}.
The proof is built upon three key estimates, the Sobolev norm estimates for $f_{\mathrm{b}}(t)$ in Lemma \ref{lemma:Hs,bounds on fb}, Sobolev norm estimates for $f_{\mathrm{r}}(t)$ from Lemma \ref{lemma:bounds on fr}, and uniform-in-time decay estimates for the correction terms $f_{\mathrm{c}}(t)$ and $g_{\mathrm{c}}(t)$ provided in Proposition \ref{lemma:perturbation}.

\begin{proof}[\textbf{Proof of instability in Theorem $\ref{thm:main theorem,instability}$}]
We consider the two solutions constructed in the previous Sections \ref{sec:Approximate Solution and Norm Deflation}--\ref{section:Z-norm Estimates for Correction Terms}, which admit the following decompositions
\begin{equation*}
\left\{
\begin{aligned}
f(t) &= f_{\mathrm{r}}(t) + f_{\mathrm{b}}(t) + f_{\mathrm{c}}(t), \quad f_{\mathrm{c}}(0)=0,\\
g(t) &= f_{\mathrm{r}}(0) + g_{\mathrm{c}}(t),\quad g_{\mathrm{c}}(0)=0.
\end{aligned}
\right.
\end{equation*}
Consequently, the difference of the two solutions is
\begin{equation*}
f(t)-g(t)=f_{\mathrm{r}}(t)-f_{\mathrm{r}}(0)+f_{\mathrm{b}}(t)+f_{\mathrm{c}}(t)-g_{\mathrm{c}}(t).
\end{equation*}
By applying Lemma \ref{lemma:Hs,bounds on fb}, Lemma \ref{lemma:bounds on fr}, and Proposition \ref{lemma:perturbation}, we have
\begin{equation}\label{equ:fb,fr,fc,proof}
\left\{
\begin{aligned}
\sup_{t \in [0, T_{*}]} \n{e^{\lra{v}^{2}} f_{\mathrm{b}}(t)}_{L_{v}^{2} H_{x}^{s_{0}}} &\lesssim M^{s_{0}-s}\lesssim \frac{1}{\ln M}, \\
\n{f_{\mathrm{r}}(0)}_{L_{v}^{2} H_{x}^{s_{0}}} &\gtrsim 1,\\
\n{e^{\lra{v}^{2}} f_{\mathrm{r}}(T_{*})}_{L_{v}^{2} H_{x}^{s_{0}}} &\lesssim \frac{1}{\ln M}, \\
\sup_{t \in [0, T_{*}]} \n{e^{(1-\kappa t) \lra{v}^{2}} f_{\mathrm{c}}(t)}_{L_{v}^{2} H_{x}^{s_{0}}}  \lesssim& M^{-\frac{\delta}{2}}\lesssim \frac{1}{\ln M}, \\
\sup_{t \in [0, T_{*}]} \n{e^{(1-\kappa t) \lra{v}^{2}} g_{\mathrm{c}}(t)}_{L_{v}^{2} H_{x}^{s_{0}}}  \lesssim& M^{-\frac{\delta}{2}}\lesssim \frac{1}{\ln M},
\end{aligned}
\right.
\end{equation}
where the final time $T_{*}=M^{s-1}(\ln\ln M)$.

For the initial time $t=0$, applying the first estimate in \eqref{equ:fb,fr,fc,proof} at $t=0$, we obtain
\begin{equation*}
\n{e^{\lra{v}^{2}} \lrs{f(0) - g(0)}}_{L_{v}^{2} H_{x}^{s_{0}}}
= \n{e^{\lra{v}^{2}} f_{\mathrm{b}}(0)}_{L_{v}^{2} H_{x}^{s_{0}}} \lesssim \frac{1}{\ln M},
\end{equation*}
which shows that the two initial data can be made arbitrarily close in the weighted $L_{v}^{2} H_{x}^{s_{0}}$ norm.

We now analyze the difference between the solutions at the prescribed final time $T_{*}$.
By the parameter choice $\kappa T_{*} = 1$, we have
\begin{align}\label{equ:kappa,T,1}
e^{(1-\kappa T_{*})\lra{v}^{2}}=1.
\end{align}
Furthermore, the difference of the two solutions at the final time gives that
\begin{equation*}
f(T_{*}) - g(T_{*}) = f_{\mathrm{r}}(T_{*}) - f_{\mathrm{r}}(0) + f_{\mathrm{b}}(T_{*}) + f_{\mathrm{c}}(T_{*}) - g_{\mathrm{c}}(T_{*}).
\end{equation*}
By taking the $L_{v}^{2}H_{x}^{s_{0}}$ norm, using \eqref{equ:kappa,T,1} at $t=T_{*}$, and applying the triangle inequality, we deduce
\begin{align}\label{equ:proof,T,difference}
&\n{f(T_{*}) - g(T_{*})}_{L_{v}^{2} H_{x}^{s_{0}}}\\
=& \n{e^{(1-\kappa T_{*})\lra{v}^{2}} \lrs{f(T_{*}) - g(T_{*})}}_{L_{v}^{2} H_{x}^{s_{0}}} \notag\\
\ge& \n{e^{(1-\kappa T_{*})\lra{v}^{2}} f_{\mathrm{r}}(0)}_{L_{v}^{2} H_{x}^{s_{0}}}
- \n{e^{(1-\kappa T_{*})\lra{v}^{2}} f_{\mathrm{r}}(T_{*})}_{L_{v}^{2} H_{x}^{s_{0}}}
- \n{e^{(1-\kappa T_{*})\lra{v}^{2}} f_{\mathrm{b}}(T_{*})}_{L_{v}^{2} H_{x}^{s_{0}}} \notag\\
&\quad - \n{e^{(1-\kappa T_{*})\lra{v}^{2}} f_{\mathrm{c}}(T_{*})}_{L_{v}^{2} H_{x}^{s_{0}}}
- \n{e^{(1-\kappa T_{*})\lra{v}^{2}} g_{\mathrm{c}}(T_{*})}_{L_{v}^{2} H_{x}^{s_{0}}}.\notag
\end{align}
Substitute the bounds from \eqref{equ:fb,fr,fc,proof} into the above inequality \eqref{equ:proof,T,difference}, we arrive at the lower bound
\begin{align*}
\n{f(T_{*}) - g(T_{*})}_{L_{v}^{2} H_{x}^{s_{0}}}
\gtrsim& \n{f_{\mathrm{r}}(0)}_{L_{v}^{2} H_{x}^{s_{0}}} - \frac{1}{\ln M}
\gtrsim 1.
\end{align*}
This establishes the desired separation at the finial time $t=T_{*}$, thereby completing the proof of the theorem.
\end{proof}

\section{Low-regularity Global Stability}\label{sec:Low-regularity Global Stability}
In this section, we consider the global stability for small initial data. Our analysis relies on dissipative energy estimates and dispersive effects. In Section \ref{section:Dissipation Energy Estimates}, we review the standard dissipative energy estimates. In Section \ref{section:Dispersive-type Bilinear Estimates}, we establish dispersive-type bilinear estimates, which are the key estimates for lowering the regularity to the sharp threshold $s>1$. Finally, in Section \ref{section:Proof of Global Stability}, we complete the proof of global stability.
\subsection{Dissipation Energy Estimates}\label{section:Dissipation Energy Estimates}
In this section, we introduce the standard dissipation energy estimates for the perturbation equation
\begin{align}\label{equ:Boltzmann,g,disspation}
\pa_{t}g+v\cdot \nabla_{x}g+Lg=\Gamma(g,g).
\end{align}
It is well-known that the linearized operator $L$ is coercive only on the microscopic component, and hence the dissipative energy estimates should be carried out by treating the macroscopic and microscopic parts separately.
We recall the macroscopic and microscopic decomposition of $g$ defined by
\begin{align*}
\mathbf{P}_{0} g=:&\left\{a+\mathbf{b} \cdot v+c\frac{|v|^2-3}{\sqrt{6}}\right\} \sqrt{\mu},\quad  \mathbf{P}_{1} g=:g-\mathbf{P}_{0} g.
\end{align*}
where $\mathbf{P}_{0}g$ and $\mathbf{P}_{1}g$ denote the macroscopic and microscopic parts, respectively. From this decomposition, it is straightforward to derive the projection estimate
\begin{align}\label{equ:projection estimate,P0}
\n{\lra{v}^{k}\mathbf{P}_{0}g}_{L_{v}^{2}}\lesssim \n{\mathbf{P}_{0}g}_{L_{v}^{2}}.
\end{align}
Furthermore, we denote by $P_{\leq 1}^{x}$ and $P_{>1}^{x}$ the Littlewood-Paley projections in the
$x$-variable, which correspond to low and high frequencies, respectively.

With these preparations, we now give the dissipation energy estimates in the following lemma.
\begin{lemma}[Dissipation energy estimates]\label{lemma:dissipation energy estimate}
Let $g$ satisfy \eqref{equ:Boltzmann,g,disspation} and
assume that $\mathbf{P}_{0}G=0$. Then we have
\begin{align}\label{equ:dissipation energy estimate,g}
\mathcal{E}(g)
\lesssim& \n{g(0)}_{L_{v}^{2}H_{x}^{s}}+\mathcal{N}(g).
\end{align}
where the energy $\mathcal{E}(g)$ and $\mathcal{N}(g)$ is given by
\begin{align}
\mathcal{E}(g):=&\n{g}_{L_{t}^{\infty}L_{v}^{2}H_{x}^{s}}+
\bn{\lra{v}^{\frac{1}{2}}\mathbf{P}_{1}g}_{L_{t}^{2}L_{v}^{2}H_{x}^{s}}+
\bbn{\frac{|\nabla_{x}|}{\lra{\nabla_{x}}}\mathbf{P}_{0}g}_{L_{t}^{2}L_{v}^{2}H_{x}^{s}},\label{equ:E,g,energy}\\
\mathcal{N}(g):=&\n{\lra{v}^{-\frac{1}{2}}(\pa_{t}+v\cdot \nabla_{x}+L)g}_{L_{t}^{2}L_{v}^{2}H_{x}^{s}}=\n{\lra{v}^{-\frac{1}{2}}\Gamma(g,g)}_{L_{t}^{2}L_{v}^{2}H_{x}^{s}}.
\label{equ:N,g,energy}
\end{align}
In particular, we have the $L_{t}^{2}$-type estimate for the high-frequency term and the $L_{x}^{6}$ estimate for the macroscopic part
\begin{align}
\n{\lra{v}^{\frac{1}{2}}P_{>1}^{x}g}_{L_{t}^{2}L_{v}^{2}H_{x}^{s}}\lesssim & \mathcal{E}(g),\label{equ:dissipation estimates,high frequency}\\
\n{\lra{v}^{\frac{1}{2}} \lra{\nabla_{x}}^{s}\mathbf{P}_{0}g}_{L_{t}^{2}L_{v}^{2}L_{x}^{6}}\lesssim& \mathcal{E}(g).\label{equ:dissipation estimates,low frequency,L6}
\end{align}
\end{lemma}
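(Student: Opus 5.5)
The plan is to follow Guo's macro--micro energy method \cite{Guo04}, carried out on the Fourier side in $x$ so that the $H_x^s$ weight passes through every step. Write $G:=\Gamma(g,g)$. The hypothesis $\mathbf{P}_0G=0$ holds because $\Gamma$ is orthogonal to the collision invariants.

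\textbf{Step 1: basic $L^2$ energy identity.} Apply $\lra{\nabla_x}^s$ to \eqref{equ:Boltzmann,g,disspation} and pair with $\lra{\nabla_x}^s g$ in $L^2_{x,v}$. The transport term is skew and drops out. The hard-sphere coercivity $\lra{Lg,g}\gtrsim \n{\lra{v}^{1/2}\mathbf{P}_1 g}_{L^2_v}^2$ produces the microscopic dissipation. Since $\mathbf{P}_0G=0$, the forcing pairs only with $\mathbf{P}_1 g$, and Cauchy--Schwarz gives
\begin{align*}
|\lra{G,g}|\le \n{\lra{v}^{-1/2}G}\,\n{\lra{v}^{1/2}\mathbf{P}_1g}.
\end{align*}
The second factor is absorbed into the dissipation. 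After integrating in time,
\begin{align*}
\n{g}_{L^\infty_tL^2_vH^s_x}^2+\n{\lra{v}^{1/2}\mathbf{P}_1g}_{L^2_tL^2_vH^s_x}^2\lesssim \n{g(0)}_{L^2_vH^s_x}^2+\mathcal{N}(g)^2 .
\end{align*}

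\textbf{Step 2: macroscopic dissipation.} Project the equation onto $\mu^{1/2}$, $v_i\mu^{1/2}$, $(|v|^2-3)\mu^{1/2}$ and onto the moments $(v_iv_j-\delta_{ij})\mu^{1/2}$ and $v_i(|v|^2-5)\mu^{1/2}$. This gives the usual fluid-type system, in which $\nabla_x a$, $\nabla_x\mathbf{b}$ and $\nabla_x c$ are expressed as $-\pa_t$ of microscopic moments, plus $\nabla_x$ of microscopic moments, plus moments of $Lg$ and $G$. Pairing these identities with $\nabla_x(a,\mathbf{b},c)$ in $\dot H^s$ yields an interactive functional
\begin{align*}
\mathcal{I}(t)\lesssim \n{g(t)}_{L^2_vH^s_x}^2
\end{align*}
such that
\begin{align*}
\frac{d}{dt}\mathcal{I}+\n{\nabla_x(a,\mathbf{b},c)}^2_{H^s}\lesssim \n{\lra{v}^{1/2}\mathbf{P}_1g}_{L^2_vH^s_x}^2+\n{\lra{v}^{-1/2}G}_{L^2_vH^s_x}^2 .
\end{align*}
The moments of $G$ are controlled by $\n{\lra{v}^{-1/2}G}$ because the test functions decay like Gaussians.

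\textbf{Main obstacle.} The whole argument must avoid any $L^2$ bound on $(a,\mathbf{b},c)$ itself, since in $\R^3$ there is no Poincaré inequality. For this reason I would carry out Step 2 on the Fourier side with the multiplier $i\xi/\lra{\xi}$, in the style of Kawashima and Duan. Then the dissipation obtained is exactly $\n{\frac{|\nabla_x|}{\lra{\nabla_x}}\mathbf{P}_0g}_{L^2_tL^2_vH^s_x}$, and $|\mathcal{I}|$ is bounded by the $L^2_vH^s_x$ energy of $g$ with no low-frequency loss. Adding a small multiple $\eta\,\mathcal{I}$ to the Step 1 energy identity, integrating in time and using the $L^\infty_t$ bound from Step 1 closes \eqref{equ:dissipation energy estimate,g}.

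\textbf{Step 3: the two consequences.} For \eqref{equ:dissipation estimates,high frequency}, split $P_{>1}^x g=P_{>1}^x\mathbf{P}_1g+P_{>1}^x\mathbf{P}_0g$. The microscopic piece is controlled directly by $\mathcal{E}(g)$. For the macroscopic piece, $|\xi|/\lra{\xi}\sim1$ on $|\xi|>1$, and $\lra{v}^{1/2}$ costs nothing on $\mathbf{P}_0$ by \eqref{equ:projection estimate,P0}. For \eqref{equ:dissipation estimates,low frequency,L6}, use \eqref{equ:projection estimate,P0} to drop the $v$-weight and reduce to the coefficients $(a,\mathbf{b},c)$. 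The high-frequency part is handled by the embedding $H^1\subset L^6$, since at high frequency we control $\lra{\nabla_x}^{s}(a,\mathbf{b},c)$ with one full derivative. The low-frequency part is handled by the homogeneous Sobolev inequality
\begin{align*}
\n{P^x_{\le1}h}_{L^6}\lesssim \n{\nabla_x P^x_{\le1}h}_{L^2},
\end{align*}
together with $|\xi|\sim |\xi|/\lra{\xi}$ for $|\xi|\le1$. Both pieces are therefore bounded by $\mathcal{E}(g)$.
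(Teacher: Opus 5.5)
Your route to \eqref{equ:dissipation energy estimate,g} and \eqref{equ:dissipation estimates,high frequency} is the paper's: the microscopic energy identity on the Fourier side, then Duan's interactive functional with dissipation $\frac{|k|^2}{1+|k|^2}\n{\mathbf{P}_0\widehat{g}}_{L^2_v}^2$, then the weight $\lra{k}^{2s}$. The high-frequency half of \eqref{equ:dissipation estimates,low frequency,L6}, however, has a genuine gap.

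You justify it by saying that at high frequency you control $\lra{\nabla_x}^s(a,\mathbf{b},c)$ with one full derivative. That claim is the first display of your Step 2, with $\n{\nabla_x(a,\mathbf{b},c)}_{H^s}^2$ on the left and only $\n{\lra{v}^{1/2}\mathbf{P}_1g}^2_{L^2_vH^s_x}$ on the right. It is not the corrected version you end up proving, and that display is false. The $\nabla_x$-of-microscopic-moment terms in your fluid-type system cost one $x$-derivative of $\mathbf{P}_1 g$, which the microscopic dissipation does not supply. This loss at $|\xi|\gg1$ is what forces the weight $\frac{|\xi|^2}{\lra{\xi}^2}$ (a pairing with $\frac{i\xi}{1+|\xi|^2}$, as in Duan). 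It is not the lack of a Poincar\'e inequality that forces it: for $|\xi|\le1$ one has $|\xi|\sim|\xi|/\lra{\xi}$ anyway. Hence on $|\xi|>1$ the energy $\mathcal{E}(g)$ only gives $\lra{\nabla_x}^sP^x_{>1}\mathbf{P}_0g\in L^2_tL^2_vL^2_x$. By contrast, $H^1\subset L^6$ applied to $\lra{\nabla_x}^sP^x_{>1}(a,\mathbf{b},c)$ needs $\lra{\nabla_x}^{s+1}P^x_{>1}(a,\mathbf{b},c)\in L^2_tL^2_x$.

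This piece cannot be repaired, because Bernstein's inequality $\n{P_Nh}_{L^6}\lesssim N\n{P_Nh}_{L^2}$ is sharp. Take the linearized flow with data $h_N(x)\mu^{1/2}$, where $h_N=N^{\frac32-s}\psi(Nx)$ and $\widehat{\psi}$ is supported in an annulus. On $[0,c/N]$ neither transport, nor $K$, nor the damping moves the density appreciably. So $\n{\lra{\nabla_x}^sa(t)}_{L^6_x}\gtrsim N$ there, and the left side of \eqref{equ:dissipation estimates,low frequency,L6} is $\gtrsim N^{1/2}$. Meanwhile $\mathcal{E}(g)\lesssim\n{g(0)}_{L^2_vH^s_x}\sim1$, by \eqref{equ:dissipation energy estimate,g} with $\Gamma$ dropped.

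What is true is the bound for $\lra{v}^{1/2}\lra{\nabla_x}^s\mathbf{P}_0P^x_{\le1}g$. Your low-frequency argument proves this correctly. It is also the only form used later, for $\Gamma(\mathbf{P}_0P^x_{\le1}g_1,\mathbf{P}_0P^x_{\le1}g_2)$ in Lemma~\ref{lemma:dispersive,bilinear estimates}. The paper's one-line proof makes the same slip: $\n{\nabla_x\lra{\nabla_x}^s\mathbf{P}_0g}_{L^2}\lesssim\bn{\frac{|\nabla_x|}{\lra{\nabla_x}}\mathbf{P}_0g}_{L^2_vH^s_x}$ holds only on low frequencies. So state \eqref{equ:dissipation estimates,low frequency,L6} for $P^x_{\le1}\mathbf{P}_0g$, rather than look for a high-frequency argument.
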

\begin{proof}
The dissipation energy estimates \eqref{equ:dissipation energy estimate,g} have been studied by many authors in various setting, such as \cite{Duan08,Duan11,DLS25,Guo03,Guo03vlasov,Guo04,Guo10,LYY04}. For completeness, we provide a proof following \cite[Chapter 4.1]{Duan11}. Applying the Fourier transform in the $x$-variable to
\eqref{equ:Boltzmann,g,disspation}, we obtain
\begin{align}\label{equ:Boltzmann,g,disspation,fourier}
\pa_{t}\widehat{g}+iv\cdot k \widehat{g}+L\widehat{g}=\widehat{\Gamma(g,g)}.
\end{align}

Testing \eqref{equ:Boltzmann,g,disspation,fourier} against $\ol{\widehat{g}}$,
integrating over the velocity variable, and using
the coercivity estimate $\lra{L\widehat{g},\widehat{g}}_{v}\gtrsim
\bn{\lra{v}^{\frac{1}{2}}\mathbf{P}_{1}\widehat{g}}_{L_{v}^{2}}^{2}$, we deduce
\begin{align}\label{equ:micro,energy,g,hat}
\frac{d}{dt}\n{\widehat{g}}_{L_{v}^{2}}^{2}+
\bn{\lra{v}^{\frac{1}{2}}\mathbf{P}_{1}\widehat{g}}_{L_{v}^{2}}^{2}
\lesssim \bn{\lra{v}^{-\frac{1}{2}}\widehat{\Gamma(g,g)}}_{L_{v}^{2}}.
\end{align}

This estimate \eqref{equ:micro,energy,g,hat} captures the dissipation of the microscopic part. Since the
linearized operator $L$ is not coercive on the macroscopic part
$\mathbf{P}_{0}\widehat{g}$, the corresponding dissipation must be recovered
separately. For this purpose, we invoke \cite[Lemma 4.1]{Duan11}, which provides a functional $\mathcal{E}_{\mathrm{int}}(\widehat{g})$ and a constant $\la>0$ such that
\begin{align}
\frac{d}{dt}\operatorname{Re}\mathcal{E}_{\mathrm{int}}(\widehat{g})+\la
\frac{|k|^{2}}{1+|k|^{2}}\bn{\mathbf{P}_{0}\widehat{g}}_{L_{v}^{2}}^{2}\lesssim &
\bn{\lra{v}^{\frac{1}{2}}\mathbf{P}_{1}\widehat{g}}_{L_{v}^{2}}^{2}+
\bn{\lra{v}^{-\frac{1}{2}}\widehat{\Gamma(g,g)}}_{L_{v}^{2}},\label{equ:macro,energy,Eint,hat}\\
|\mathcal{E}_{\mathrm{int}}(\widehat{g})|\lesssim& \n{\widehat{g}}_{L_{v}^{2}}^{2}
\label{equ:Eint,g,hat}.
\end{align}

Combining \eqref{equ:micro,energy,g,hat} and \eqref{equ:macro,energy,Eint,hat}, and
choosing $\ve>0$ sufficiently small, we derive
\begin{align*}
\frac{d}{dt}\n{\widehat{g}}_{L_{v}^{2}}^{2}+\ve\frac{d}{dt}\operatorname{Re}\mathcal{E}_{\mathrm{int}}(\widehat{g})
+\ve \la
\frac{|k|^{2}}{1+|k|^{2}}\bn{\mathbf{P}_{0}\widehat{g}}_{L_{v}^{2}}^{2}+\frac{1}{2}
\bn{\lra{v}^{\frac{1}{2}}\mathbf{P}_{1}\widehat{g}}_{L_{v}^{2}}^{2}\lesssim
\bn{\lra{v}^{-\frac{1}{2}}\widehat{\Gamma(g,g)}}_{L_{v}^{2}},
\end{align*}
Multiplying this inequality by $\lra{k}^{2s}$, integrating over $k$ and in time, and using \eqref{equ:Eint,g,hat} to absorb $\mathcal{E}_{\mathrm{int}}(\widehat{g})$, we then deduce the desired estimate \eqref{equ:dissipation energy estimate,g}.

For \eqref{equ:dissipation estimates,high frequency}, it follows from macroscopic-microscopic decomposition
\begin{align*}
\n{\lra{v}^{\frac{1}{2}}P_{>1}^{x}g}_{L_{t}^{2}L_{v}^{2}H_{x}^{s}}\lesssim & \n{\lra{v}^{\frac{1}{2}}\mathbf{P}_{1}P_{>1}^{x}g}_{L_{t}^{2}L_{v}^{2}H_{x}^{s}}+
\n{\lra{v}^{\frac{1}{2}}\mathbf{P}_{0} P_{>1}^{x}g}_{L_{t}^{2}L_{v}^{2}H_{x}^{s}}\lesssim \mathcal{E}(g),
\end{align*}
where in the last inequality we have used the projection estimate \eqref{equ:projection estimate,P0}.

For \eqref{equ:dissipation estimates,low frequency,L6},
by Sobolev inequality that $\dot{H}_{x}^{1}\hookrightarrow L_{x}^{6}$ and the projection estimate \eqref{equ:projection estimate,P0}, we have
\begin{align*}
\n{\lra{v}^{\frac{1}{2}}\lra{\nabla_{x}}^{s}\mathbf{P}_{0}g}_{L_{t}^{2}L_{v}^{2}L_{x}^{6}}\lesssim& \n{ \nabla_{x}\lra{\nabla_{x}}^{s}\mathbf{P}_{0}g}_{L_{t}^{2}L_{v}^{2}L_{x}^{2}}\lesssim
\bbn{\frac{|\nabla_{x}|}{\lra{\nabla_{x}}}\mathbf{P}_{0}g}_{L_{t}^{2}L_{v}^{2}H_{x}^{s}}\lesssim \mathcal{E}(g).
\end{align*}
Therefore, we have complete the proof of Lemma \ref{lemma:dissipation energy estimate}.
\end{proof}
We remark that the $L_{x}^{6}$ estimate \eqref{equ:dissipation estimates,low frequency,L6}
for the macroscopic part, introduced in \cite[Lemma 5.3]{Guo04}, is needed to close the low-frequency estimates
in Lemma \ref{lemma:dispersive,bilinear estimates}.

\subsection{Dispersive-type Bilinear Estimates}\label{section:Dispersive-type Bilinear Estimates}
Starting from the perturbation equation \eqref{equ:Boltzmann,g,disspation} with $L=\nu-K$, we rewrite it in the Duhamel form
\begin{align}
g(t)=S_{\nu}(t)g(0)+\int_{0}^{t}S_{\nu}(t-s)Kg(s)ds+
\int_{0}^{t}S_{\nu}(t-s)\Gamma(g,g)(s)ds,
\end{align}
where $S_{\nu}(t):=e^{-t\nu}S(t)$ is the damped transport operator and $S(t)=:e^{-tv\cdot \nabla_{x}}$ is the free transport operator.
We proceed to establish dispersive-type bilinear estimates, which serve as a foundation for the subsequent global stability analysis.
\begin{lemma}[Dispersive-type bilinear estimates]
Let $s>1$. There holds that
\begin{align}\label{equ:Gamma,f,g,s>1}
\n{\lra{v}^{-\frac{1}{2}}\Gamma^{\pm}(S_{\nu}(t)g_{1}(0),S_{\nu}(t)g_{2}(0))}_{L_{t}^{2}L_{v}^{2}H_{x}^{s}}\lesssim & \n{g_{1}(0)}_{L_{v}^{2}H_{x}^{s}}\n{g_{2}(0)}_{L_{v}^{2}H_{x}^{s}},\\
\bbn{\lra{v}^{-\frac{1}{2}}\Gamma^{\pm}\lrs{\int_{0}^{t}S_{\nu}(t-t_{1})g_{1}(t_{1})
dt_{1},S_{\nu}(t)g_{2}(0)}}_{L_{t}^{2}L_{v}^{2}H_{x}^{s}}
\lesssim &
\bn{\lra{v}^{-\frac{1}{2}}
g_{1}(t_{1})}_{L_{t_{1}}^{2}L_{v}^{2}H_{x}^{s}}\n{g_{2}(0)}_{L_{v}^{2}H_{x}^{s}},\label{equ:Gamma,f,t1,g,s>1}\\
\bbn{\lra{v}^{-\frac{1}{2}}\Gamma^{\pm}\lrs{S_{\nu}(t)g_{1}(0),\int_{0}^{t}S_{\nu}(t-t_{2})
g_{2}(t_{2})dt_{2}}}_{L_{t}^{2}L_{v}^{2}H_{x}^{s}}
\lesssim &
\n{g_{1}(0)}_{L_{v}^{2}H_{x}^{s}}
\bn{\lra{v}^{-\frac{1}{2}}
g_{2}(t_{2})}_{L_{t_{2}}^{2}L_{v}^{2}H_{x}^{s}},\label{equ:Gamma,f,g,t2,s>1}
\end{align}
\begin{align}\label{equ:Gamma,f,t1,g,t2,s>1}
&\bbn{\lra{v}^{-\frac{1}{2}}\Gamma^{\pm}\lrs{\int_{0}^{t}S_{\nu}(t-t_{1})g_{1}(t_{1})
dt_{1},\int_{0}^{t}S_{\nu}(t-t_{2})g_{2}(t_{2})dt_{2}}}_{L_{t}^{2}L_{v}^{2}H_{x}^{s}}\\
\lesssim &
\bn{\lra{v}^{-\frac{1}{2}}
g_{1}(t_{1})}_{L_{t_{1}}^{2}L_{v}^{2}H_{x}^{s}}\bn{\lra{v}^{-\frac{1}{2}}
g_{2}(t_{2})}_{L_{t_{2}}^{2}L_{v}^{2}H_{x}^{s}}.\notag
\end{align}
\end{lemma}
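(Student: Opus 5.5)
The plan is to reduce all four estimates to a single space-time estimate for the gain and loss parts of $\Gamma$ acting on two free damped transport evolutions, using the dispersive structure of the transport operator. Since $\Gamma^{\pm}(h_{1},h_{2})=\mu^{-1/2}Q^{\pm}(\mu^{1/2}h_{1},\mu^{1/2}h_{2})$, the Maxwellian weight gives Gaussian decay in the second slot of the loss term and, through the energy identity $|u|^{2}+|v|^{2}=|u^{*}|^{2}+|v^{*}|^{2}$, in both slots of the gain term. Consequently $\lra{v}^{-1/2}\Gamma^{\pm}$ behaves like a bilinear operator with bounded velocity weights. The damping factor $e^{-t\nu}$ can be discarded pointwise, since $\nu\ge0$ and all the operators involved are positive, so it suffices to treat the free transport $S(t)$. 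The only time integrability we must create is the $L^{2}_{t}$ of the output, and this will come from dispersion.

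\textbf{Step 1 (homogeneous estimate \eqref{equ:Gamma,f,g,s>1}).} I would pass to the Fourier side in $x$ and, following \cite{CDP19,CSZ23sharp}, also take the inverse Fourier transform in $v$. Under this transform $S(t)$ becomes a hyperbolic Schr\"odinger flow and $Q^{\pm}$ becomes a bilinear multiplier. I would then prove the space-time collapse-type estimate
\[
\n{\lra{\nabla_{x}}^{s}\Gamma^{\pm}(S(t)h_{1},S(t)h_{2})}_{L_{t}^{2}(\R)L_{v}^{2}L^{2}_{x}}\lesssim\n{h_{1}}_{L_{v}^{2}H_{x}^{s}}\n{h_{2}}_{L_{v}^{2}H_{x}^{s}},\qquad s>1.
\]
For the loss term $\Gamma^{-}(h_{1},h_{2})=h_{1}(v)\int|u-v|\mu^{1/2}(u)h_{2}(u)\,du$ this is more direct. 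The factor $\int \mu^{1/2}(u)h_{2}(x-ut,u)\,du$ is a velocity average, which satisfies the dispersive bound $L^{\infty}_{x}\lesssim t^{-3/2}$ for large $t$ and is bounded in $L^{\infty}_{x}$ for small $t$ when $s>3/2$. To reach $s>1$ I would instead use the $L^{2}_{t}$ averaging/Strichartz-type estimate for kinetic transport, $\n{\int\mu^{1/2}S(t)h\,du}_{L^{2}_{t}L^{q}_{x}}$, and place $S(t)h_{1}$ in $L^{\infty}_{t}L^{2}_{v}L^{p}_{x}$. The $s$ derivatives are distributed by the fractional Leibniz rule. The gain term would be handled by the collision transform together with the same averaging argument, invoking the estimates of \cite{CDP19,CSZ23sharp} for the multiplier.

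\textbf{Step 2 (inhomogeneous estimates).} For \eqref{equ:Gamma,f,t1,g,s>1} I would write $\int_{0}^{t}S(t-t_{1})g_{1}(t_{1})dt_{1}=\int_{0}^{t}S(t)\big[S(-t_{1})g_{1}(t_{1})\big]dt_{1}$ and use Minkowski to pull the $t_{1}$-integral outside the bilinear form. This reduces the problem to Step 1 with initial datum $S(-t_{1})g_{1}(t_{1})$ evaluated at time $t-t_{1}$. A plain Minkowski bound would lose, because it yields $L^{1}_{t_{1}}$ instead of $L^{2}_{t_{1}}$. To recover $L^{2}_{t_{1}}$, I would keep the time decay of the Step 1 kernel in $t-t_{1}$, of the form $\min(1,(t-t_{1})^{-3/2})$ or an $L^{1}$-integrable damping in $t-t_{1}$, and apply Young's inequality in time. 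The weight $\lra{v}^{-1/2}$ on $g_{1}$ is compensated by the Maxwellian weights inside $\Gamma$, which absorb any $\lra{v}^{1/2}$.

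\textbf{Step 3 (mixed and double Duhamel terms).} The mixed estimate \eqref{equ:Gamma,f,g,t2,s>1} is symmetric to Step 2, and \eqref{equ:Gamma,f,t1,g,t2,s>1} follows by applying the Minkowski/Young argument in both $t_{1}$ and $t_{2}$.

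I expect Step 2 to be the main obstacle. For a general $g_{1}$, the free transport alone gives no integrable decay in $t-t_{1}$ at low frequency, so genuinely converting to $L^{2}_{t_{1}}$ is delicate. If pure dispersion fails there, I would exploit the damping $e^{-(t-t_{1})\nu}$ with $\nu\gtrsim\lra{v}$, which yields integrable exponential decay in $t-t_{1}$. This would be combined with a $TT^{*}$/Christ–Kiselev-type argument to handle the low-frequency part.
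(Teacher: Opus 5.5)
Your proposal has a genuine gap in Step 1, and it concerns the velocity weights rather than the $x$-regularity. The premise that $\langle v\rangle^{-1/2}\Gamma^{\pm}$ has bounded velocity weights is false. The loss term
$\Gamma^{-}(h_1,h_2)(v)=h_1(v)\int\!\!\int|(u-v)\cdot\omega|\mu^{1/2}(u)h_2(u)\,du\,d\omega$
is of size $\langle v\rangle|h_1(v)|$, so $\langle v\rangle^{-1/2}\Gamma^{-}$ still carries $\langle v\rangle^{1/2}h_1$. In $\Gamma^{+}=\int\!\!\int|(u-v)\cdot\omega|\mu^{1/2}(u)h_1(v^*)h_2(u^*)$ the Gaussian sits only on $u$. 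Consequently your displayed estimate, with the weight dropped and undamped $S(t)$, is false. Take $h_1=\phi(x)\chi(v-Ve_1)$ and $h_2=\phi(x)\chi(u)$ with nonnegative bumps $\phi,\chi$. Then $\Gamma^{-}(S(t)h_1,S(t)h_2)\gtrsim V$ on a set of $(t,x,v)$ of measure $\sim V^{-1}$, so the left side is $\gtrsim V^{1/2}$ while the right side is $O(1)$.

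Restoring the weight does not save your mechanism either. The average $\int|u-v|\mu^{1/2}(u)S(t)h_2\,du$ is $\approx|v|\rho_2(t,x)$ with $\rho_2$ independent of $v$. Placing $S(t)h_1$ in $L^\infty_tL^2_vL^p_x$ and this average in $L^2_tL^q_x$ therefore leaves $\|\langle v\rangle^{1/2}h_1\|_{L^2_vH^s_x}$ on the right. Trading $L^\infty_t$ for the damping, via $\|e^{-\nu(v)t}\|_{L^2_t}\sim\langle v\rangle^{-1/2}$, would instead require the velocity average to be bounded in $L^\infty_{t,x}$ when the $s$ derivatives fall on $h_1$, and that needs $s>3/2$.

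The missing idea is to exploit the relative velocity $u-v$ of the two factors, pointwise in $(u,v)$. The paper applies Cauchy--Schwarz in $(u,\omega)$, splitting $|(u-v)\cdot\omega|\le|u-v|^{1/2}|u-v|^{1/2}$. One half pairs with $\mu^{1/2}(u)$ to give $(\int\mu(u)|u-v|\,du)^{1/2}\lesssim\langle v\rangle^{1/2}$, which the weight $\langle v\rangle^{-1/2}$ cancels. After the collision change of variables, everything reduces to $\| |u-v|^{1/2}\langle\nabla_x\rangle^{s}[h_1(x-vt,v)h_2(x-ut,u)]\|_{L^2_{t,x,v,u}}$. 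The space-time Fourier transform concentrates this on $\delta(\tau+u\cdot\xi+(u-v)\cdot\eta)$. The plane bound $\int\langle\eta\rangle^{-2s}\delta(a+\eta\cdot w)\,d\eta\lesssim|w|^{-1}$, valid precisely for $s>1$, applied with $w=u-v$, then absorbs the remaining $|u-v|$. This is a collapsing estimate in the spirit of Klainerman--Machedon, not a velocity-averaging or Strichartz bound, and no Fourier transform in $v$ is needed.

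In Step 2 you have the weight backwards. The norm $\|\langle v\rangle^{-1/2}g_1\|$ is weaker than $\|g_1\|$, so a factor $\langle v\rangle^{-1/2}$ must be gained. No Maxwellian sits on the velocity of $g_1$: it is the first slot of $\Gamma^-$, and after the collision change of variables the same holds for $\Gamma^+$. The gain comes from the damping, which supplies the $L^2_{t_1}$ integrability at the same time:
\begin{itemize}
\item pointwise in $(t,x,v,u)$, apply Cauchy--Schwarz in $t_1$ with $\|e^{-\nu(v)(t-t_1)}1_{\{0\le t_1\le t\}}\|_{L^2_{t_1}}\lesssim\langle v\rangle^{-1/2}$, and likewise in $t_2$;
\item use Fubini/Minkowski to move $L^2_{t_1}$ outside;
\item apply the homogeneous estimate for fixed $t_1$ to the data $\langle v\rangle^{-1/2}S(-t_1)g_1(t_1)$, using $S(t-t_1)=S(t)S(-t_1)$ and that $S(-t_1)$ is an isometry of $L^2_vH^s_x$.
\end{itemize}
Your fallback points in this direction. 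However, an $L^2_t$ estimate provides no dispersive kernel $\min(1,(t-t_1)^{-3/2})$, and no $TT^*$ or Christ--Kiselev argument is needed. The step you flagged as the main obstacle is easy once the damping is used this way; the real difficulty is balancing the weights in Step 1.
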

\begin{proof}
We first expand
\begin{align*}
\Gamma^{+}(F,G)
=&\int_{\R^{3}}\int_{\mathbb{S}^{2}}
\mu(v)|(u-v)\cdot \omega|F(t,x,v^{*})G(t,x,u^{*})d\omega du,\\
\Gamma^{-}(F,G)
=&\int_{\R^{3}}\int_{\mathbb{S}^{2}}
\mu(v)|(u-v)\cdot \omega|F(t,x,v)G(t,x,u)d\omega du.
\end{align*}
By Cauchy-Schwarz inequality, we obtain
\begin{align}
&\bn{\lra{v}^{-\frac{1}{2}}\Gamma^{+}(F,G)}_{L_{t}^{2}L_{v}^{2}H_{x}^{s}}\notag\\
\lesssim&\bn{\lra{v}^{-\frac{1}{2}}\mu(v)|(u-v)\cdot \omega|\lra{\nabla_{x}}^{s}\lrc{F(t,x,v^{*})G(t,x,u^{*})}}_{
L_{t}^{2}L_{x}^{2}L_{v}^{2}L_{u}^{1}L_{\omega}^{1}}\notag\\
\lesssim& \bn{\lra{v}^{-\frac{1}{2}}\mu(v)|u-v|^{\frac{1}{2}}}_{L_{v}^{\infty}L_{u}^{2}L_{\omega}^{2}}
\bn{|u-v|^{\frac{1}{2}}\lra{\nabla_{x}}^{s}\lrc{F(t,x,v^{*})
G(t,x,u^{*})}}_{L_{t}^{2}L_{x}^{2}L_{v}^{2}L_{u}^{2}L_{\omega}^{2}}.\notag
\end{align}
Noting that
\begin{align*}
\bn{\lra{v}^{-\frac{1}{2}}\mu(v)|u-v|^{\frac{1}{2}}}_{L_{v}^{\infty}L_{u}^{2}L_{\omega}^{2}}\lesssim 1,
\end{align*}
and using the collision transform $(u^{*},v^{*})\mapsto (u,v)$ together with $|u^{*}-v^{*}|=|u-v|$, we get
\begin{align}\label{equ:Gamma+,bilinear,space-time,FG}
\bn{\lra{v}^{-\frac{1}{2}}\Gamma^{+}(F,G)}_{L_{t}^{2}L_{v}^{2}H_{x}^{s}}\lesssim&\bn{|u-v|^{\frac{1}{2}}
\lra{\nabla_{x}}^{s}\lrc{F(t,x,v^{*})G(t,x,u^{*})}}_{L_{t}^{2}L_{x}^{2}L_{v}^{2}L_{u}^{2}L_{\omega}^{2}}\\
\lesssim&\bn{|u-v|^{\frac{1}{2}}\lra{\nabla_{x}}^{s}\lrc{F(t,x,v)G(t,x,u)}}_{L_{t}^{2}L_{x}^{2}L_{v}^{2}L_{u}^{2}}.\notag
\end{align}

Similarly, proceeding as in \eqref{equ:Gamma+,bilinear,space-time,FG}, we also have
\begin{align}\label{equ:Gamma-,bilinear,space-time,FG}
\bn{\lra{v}^{-\frac{1}{2}}\Gamma^{+}(F,G)}_{L_{t}^{2}L_{v}^{2}H_{x}^{s}} \lesssim&\bn{|u-v|^{\frac{1}{2}}\lra{\nabla_{x}}^{s}\lrc{F(t,x,v)G(t,x,u)}}_{L_{t}^{2}L_{x}^{2}L_{v}^{2}L_{u}^{2}}.
\end{align}

Now, we get into the analysis of estimate \eqref{equ:Gamma,f,g,s>1}. Using \eqref{equ:Gamma+,bilinear,space-time,FG} and \eqref{equ:Gamma-,bilinear,space-time,FG} with $F=S_{\nu}(t)g_{1}(0)$ and $G=S_{\nu}(t)g_{2}(0)$, we derive
\begin{align}
&\bn{\lra{v}^{-\frac{1}{2}}\Gamma^{\pm}(S_{\nu}(t)g_{1}(0),S_{\nu}(t)g_{2}(0))}_{L_{t}^{2}L_{v}^{2}H_{x}^{s}}
\label{equ:bilinear,space-time,f0g0}\\
\lesssim&\bn{|u-v|^{\frac{1}{2}}\lra{\nabla_{x}}^{s}\lrc{e^{-\nu(v)t}g_{1}(0,x-vt,v)
e^{-\nu(u)t}g_{2}(0,x-ut,u)}}_{L_{t}^{2}L_{x}^{2}L_{v}^{2}L_{u}^{2}}\notag\\
\lesssim&\bn{|u-v|^{\frac{1}{2}}\lra{\nabla_{x}}^{s}\lrc{g_{1}(0,x-vt,v)
g_{2}(0,x-ut,u)}}_{L_{t}^{2}L_{x}^{2}L_{v}^{2}L_{u}^{2}}.\notag
\end{align}
For simplicity, we set
\begin{align*}
H(t,x,u,v):=|u-v|^{\frac{1}{2}}\lra{\nabla_{x}}^{s}\lrc{g_{1}(0,x-vt,v)
g_{2}(0,x-ut,u)}.
\end{align*}

Therefore, the desired estimate \eqref{equ:Gamma,f,g,s>1} reduces to the $L^{2}$ estimate for $H(t,x,u,v)$. Such type estimates share a similar flavor as the space-time collapsing estimates for dispersive equations, as can be seen, for instance, in \cite[Theorem 1.3]{KM08}.
The idea is that the space-time Fourier transform converts the $L^{2}$ estimate into a problem of restricted-type surface integrals.
To proceed,
we take the space-time Fourier transform to obtain
\begin{equation}
\wt{H}(\tau,\xi,u,v)
=|u-v|^{\frac{1}{2}}\lra{\xi}^{s}
\int_{\R^{3}}
\wt{g_{1}}(0,\eta,v)\wt{g_{2}}(0,\xi-\eta,u)
\delta\lrs{\tau+u\cdot \xi+(u-v)\cdot \eta
}d\eta.
\end{equation}
By Cauchy-Schwarz inequality, we obtain
\begin{align}\label{equ:H,space-time fourier}
|\wt{H}(\tau,\xi,u,v)|^{2}
\lesssim& |I(\tau,\xi,u,v)|^{2}|J(\tau,\xi,u,v)|^{2},
\end{align}
where
\begin{align*}
I(\tau,\xi,u,v)=&\int_{\R^{3}} |u-v| \frac{\lra{\xi}^{2s}}{\lra{\eta}^{2s}\lra{\xi-\eta}^{2s}}
\delta\lrs{\tau+u\cdot \xi+(u-v)\cdot \eta
}d\eta,\\
J(\tau,\xi,u,v)=&\int_{\R^{3}}
|\lra{\eta}^{s}\wt{g_{1}}(0,\eta,v)|^{2}|\lra{\xi-\eta}^{s}\wt{g_{2}}(0,\xi-\eta,u)|^{2}
\delta\lrs{\tau+u\cdot \xi+(u-v)\cdot \eta
}d\eta.
\end{align*}

From \eqref{equ:H,space-time fourier}, it suffices to establish a uniform bound for
$I(\tau,\xi,u,v)$. By rotational symmetry, we begin with the following surface integral estimate with $s>1$
\begin{align}\label{equ:integration,estimate,delta,eta}
\int_{\R^{3}}\frac{1}{\lra{\eta}^{2s}}\delta(a+\eta \cdot w )d\eta
=&\int_{\R^{3}}\frac{1}{\lra{\eta}^{2s}}\delta(a+\eta_{1}|w| )d\eta\\
\lesssim &\frac{1}{|w|}\int_{\R^{2}}\lrs{\frac{1}{\sqrt{1+|\eta_{2}|^{2}+|\eta_{3}|^{2}}}}^{2s}d\eta_{2}d\eta_{3}\lesssim
\frac{1}{|w|}.\notag
\end{align}
Using \eqref{equ:integration,estimate,delta,eta}, we get
\begin{align}\label{equ:Linf,I,tau,xi}
I(\tau,\xi,u,v)\lesssim
\int_{\R^{3}} |u-v| \lrs{\frac{1}{\lra{\eta}^{2s}}+
\frac{1}{\lra{\xi-\eta}^{2s}}}
\delta\lrs{\tau+u\cdot \xi+(u-v)\cdot \eta
}d\eta\lesssim 1.
\end{align}
Putting together \eqref{equ:H,space-time fourier} and \eqref{equ:Linf,I,tau,xi}, by Plancherel's theorem, we arrive at
\begin{align}\label{equ:L2 space-time,St,f0,g0}
&\bn{|u-v|^{\frac{1}{2}}\lra{\nabla_{x}}^{s}\lrc{g_{1}(0,x-vt,v)
g_{2}(0,x-ut,u)}}_{L_{t}^{2}L_{x}^{2}L_{v}^{2}L_{u}^{2}}\\
\lesssim &
\n{\wt{H}(\tau,\xi,u,v)}_{L_{\tau}^{2}L_{\xi}^{2}L_{v}^{2}L_{u}^{2}}\notag\\
\lesssim&\n{I(\tau,\xi,u,v)}_{L_{\tau}^{\infty}L_{\xi}^{\infty}L_{v}^{\infty}L_{u}^{\infty}}
\n{J(\tau,\xi,u,v)}_{L_{\tau}^{2}L_{\xi}^{2}L_{v}^{2}L_{u}^{2}}\notag\\
\lesssim&\n{g_{1}(0)}_{L_{v}^{2}H_{x}^{s}}\n{g_{2}(0)}_{L_{v}^{2}H_{x}^{s}},\notag
\end{align}
which, together with \eqref{equ:bilinear,space-time,f0g0}, completes the proof of the desired estimate \eqref{equ:Gamma,f,g,s>1}.

Next, we proceed to the analysis of the estimates \eqref{equ:Gamma,f,t1,g,s>1}--\eqref{equ:Gamma,f,t1,g,t2,s>1}. In fact, it suffices to establish \eqref{equ:Gamma,f,t1,g,t2,s>1}, since the proof of the estimates \eqref{equ:Gamma,f,t1,g,s>1}--\eqref{equ:Gamma,f,g,t2,s>1} follows by similar arguments to those used for \eqref{equ:Gamma,f,g,s>1} and \eqref{equ:Gamma,f,t1,g,t2,s>1}.

Using again \eqref{equ:Gamma+,bilinear,space-time,FG} and \eqref{equ:Gamma-,bilinear,space-time,FG} with
$$F=\int_{0}^{t}S_{\nu}(t-t_{1})g_{1}(t_{1})
dt_{1},\quad G=\int_{0}^{t}S_{\nu}(t-t_{2})g_{2}(t_{2})dt_{2},$$
we obtain
\begin{align*}
I_{\eqref{equ:Gamma,f,t1,g,t2,s>1}}:=&\bbn{\lra{v}^{-\frac{1}{2}}\Gamma^{\pm}\lrs{\int_{0}^{t}S_{\nu}(t-t_{1})g_{1}(t_{1})
dt_{1},\int_{0}^{t}S_{\nu}(t-t_{2})g_{2}(t_{2})dt_{2}}}_{L_{t}^{2}L_{v}^{2}H_{x}^{s}}\\
\lesssim& \bbn{|u-v|^{\frac{1}{2}}\lra{\nabla_{x}}^{s}\lrc{\int_{0}^{t}S_{\nu}(t-t_{1})g_{1}(t_{1},x,v)
dt_{1}
\int_{0}^{t}S_{\nu}(t-t_{2})g_{2}(t_{2},x,u)dt_{2}}}_{L_{t}^{2}L_{x}^{2}L_{v}^{2}L_{u}^{2}}.
\end{align*}
Noting that $S_{\nu}(t-\tau)=e^{-\nu(t-\tau)}S(t-\tau)$ and
\begin{align*}
\bn{e^{-\nu(v)(t-t_{1})}
e^{-\nu(u)(t-t_{2})}1_{\lr{t\geq t_{1},t\geq t_{2}}}}_{L_{t_{1}}^{2}L_{t_{2}}^{2}}\lesssim
\lra{v}^{-\frac{1}{2}}\lra{u}^{-\frac{1}{2}},
\end{align*}
we use Cauchy-Schwarz inequality to get
\begin{align*}
I_{\eqref{equ:Gamma,f,t1,g,t2,s>1}}\lesssim&
\bn{|u-v|^{\frac{1}{2}}\lra{v}^{-\frac{1}{2}}\lra{u}^{-\frac{1}{2}}\lra{\nabla_{x}}^{s}\lrc{S(t-t_{1})g_{1}(t_{1},x,v)
S(t-t_{2})g_{2}(t_{2},x,u)}}_{L_{t}^{2}L_{x}^{2}L_{v}^{2}L_{u}^{2}L_{t_{1}}^{2}L_{t_{2}}^{2}}.
\end{align*}
Then, by Minkowski inequality and the $L^{2}$ estimate \eqref{equ:L2 space-time,St,f0,g0}, we derive
\begin{align*}
I_{\eqref{equ:Gamma,f,t1,g,t2,s>1}}\lesssim& \bbn{\bn{\lra{v}^{-\frac{1}{2}}S(-t_{1})g_{1}(t_{1})}_{L_{v}^{2}H_{x}^{s}}
\bn{\lra{u}^{-\frac{1}{2}}S(-t_{2})g_{2}(t_{2})}_{L_{u}^{2}H_{x}^{s}}}_{L_{t_{1}}^{2}L_{t_{2}}^{2}}\\
\lesssim&\bn{\lra{v}^{-\frac{1}{2}}
g_{1}(t_{1})}_{L_{t_{1}}^{2}L_{v}^{2}H_{x}^{s}}\bn{\lra{v}^{-\frac{1}{2}}
g_{2}(t_{2})}_{L_{t_{2}}^{2}L_{v}^{2}H_{x}^{s}},
\end{align*}
which completes the proof of \eqref{equ:Gamma,f,t1,g,t2,s>1}.
\end{proof}

\subsection{Proof of Global Stability}\label{section:Proof of Global Stability}
We first establish the closure estimates for the nonlinear terms, and then provide the proof of Theorem \ref{theorem:global stability}.
\begin{lemma}\label{lemma:dispersive,bilinear estimates}
Let $g_{i}$ satisfy the Boltzmann equation \eqref{equ:Boltzmann,g,disspation}. Then we have
\begin{align}
\bn{\lra{v}^{-\frac{1}{2}}\Gamma(P_{>1}^{x}g_{1},P_{\leq 1}^{x}g_{2})}_{L_{t}^{2}L_{v}^{2}H_{x}^{s}}\lesssim &
\mathcal{E}(g_{1})\mathcal{E}(g_{2}),\label{equ:high,low,g1,g2,Gamma}\\
\bn{\lra{v}^{-\frac{1}{2}}\Gamma(P_{\leq 1}^{x}g_{1},P_{> 1}^{x}g_{2})}_{L_{t}^{2}L_{v}^{2}H_{x}^{s}}\lesssim &
\mathcal{E}(g_{1})\mathcal{E}(g_{2}),\label{equ:low,high,g1,g2,Gamma}\\
\bn{\lra{v}^{-\frac{1}{2}}\Gamma(P_{\leq 1}^{x}g_{1},P_{\leq 1}^{x}g_{2})}_{L_{t}^{2}L_{v}^{2}H_{x}^{s}}\lesssim &
\mathcal{E}(g_{1})\mathcal{E}(g_{2}),\label{equ:low,low,g1,g2,Gamma}\\
\bn{\lra{v}^{-\frac{1}{2}}\Gamma(P_{>1}^{x}g_{1},P_{>1}^{x}g_{2})}_{L_{t}^{2}L_{v}^{2}H_{x}^{s}}\lesssim & \prod_{i=1,2}\lrs{\n{g_{i}(0)}_{L_{v}^{2}H_{x}^{s}}+
\mathcal{E}(g_{i})+\mathcal{N}(g_{i})},\label{equ:high,high,g1,g2,Gamma}
\end{align}
where $\mathcal{E}(g)$ and $\mathcal{N}(g)$ is given by \eqref{equ:E,g,energy}--\eqref{equ:N,g,energy}.
In particular, we have
\begin{align}\label{equ:g1,g2,Gamma,bilinear,estimate}
\n{\lra{v}^{-\frac{1}{2}}\Gamma(g_{1},g_{2})}_{L_{t}^{2}L_{v}^{2}H_{x}^{s}}\lesssim
\prod_{i=1,2}\lrs{\n{g_{i}(0)}_{L_{v}^{2}H_{x}^{s}}+\mathcal{N}(g_{i})}.
\end{align}
\end{lemma}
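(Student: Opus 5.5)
We split $\Gamma(g_1,g_2)$ by the Littlewood--Paley decomposition $g_i=P^x_{\le1}g_i+P^x_{>1}g_i$ and treat the four frequency interactions separately; \eqref{equ:g1,g2,Gamma,bilinear,estimate} then follows by summing \eqref{equ:high,low,g1,g2,Gamma}--\eqref{equ:high,high,g1,g2,Gamma} and using \eqref{equ:dissipation energy estimate,g} to replace $\mathcal{E}(g_i)$ by $\n{g_i(0)}_{L_v^2H_x^s}+\mathcal{N}(g_i)$. All estimates start from the reduction \eqref{equ:Gamma+,bilinear,space-time,FG}--\eqref{equ:Gamma-,bilinear,space-time,FG}, which bounds $\lra{v}^{-\frac12}\Gamma^\pm(F,G)$ in $L_t^2L_v^2H_x^s$ by $\n{|u-v|^{\frac12}\lra{\nabla_x}^s[F(v)G(u)]}_{L_t^2L_x^2L_v^2L_u^2}$. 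We bound $|u-v|^{\frac12}\lesssim\lra{v}^{\frac12}\lra{u}^{\frac12}$ and use the fractional Leibniz rule in $x$.

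\textbf{Terms with at least one low-frequency factor.} For \eqref{equ:high,low,g1,g2,Gamma}, we put the high-frequency factor in $L_t^2$ and the low one in $L_t^\infty$. The Leibniz rule gives
\begin{align*}
\n{\lra{v}^{\frac12}P^x_{>1}g_1}_{L_t^2L_v^2H_x^s}\,\n{\lra{u}^{\frac12}P^x_{\le1}g_2}_{L_t^\infty L_u^2L_x^\infty}+\n{\lra{v}^{\frac12}P^x_{>1}g_1}_{L_t^2L_v^2L_x^{p}}\,\n{\lra{u}^{\frac12}\lra{\nabla_x}^sP^x_{\le1}g_2}_{L_t^\infty L_u^2L_x^{q}}.
\end{align*}
The high factor is controlled by \eqref{equ:dissipation estimates,high frequency} (with Sobolev embedding $H^s\subset L^p$, $s>1$). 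The low factor loses the weight $\lra{u}^{\frac12}$, which is not available in $L_t^\infty$. We handle this by splitting $P^x_{\le1}g_2=\mathbf P_0P^x_{\le1}g_2+\mathbf P_1P^x_{\le1}g_2$. The macroscopic part absorbs weights by \eqref{equ:projection estimate,P0} and is bounded in $L_t^\infty$ via Bernstein. For the microscopic part, we instead put the $L_t^2$ on it, using $\n{\lra{v}^{\frac12}\mathbf P_1g}_{L_t^2L_v^2H_x^s}\lesssim\mathcal{E}$. The high factor then goes in $L_t^\infty$, but it also carries a weight; here we exploit the Gaussian $\mu(v)$ in $\Gamma$ more carefully rather than the crude $\lra{v}^{\frac12}\lra{u}^{\frac12}$ bound. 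For the gain term, $\mu(v)\le \mu^{\frac12}(v^*)\mu^{\frac12}(u^*)$ up to constants, so one of the two arguments can always be taken weight-free.

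\textbf{Low--low interaction.} Estimate \eqref{equ:low,low,g1,g2,Gamma} is the delicate low-frequency piece, since neither factor gains from $P_{>1}^x$. Here we split both factors into $\mathbf P_0+\mathbf P_1$. Whenever a microscopic factor appears, it goes in $L_t^2$ and the other factor in $L_t^\infty L_x^\infty$, using Bernstein. For the $\mathbf P_0\times\mathbf P_0$ piece, we use the $L_t^2L_x^6$ bound \eqref{equ:dissipation estimates,low frequency,L6} on one factor. On the other we use $L_t^\infty L_x^3$, interpolating $L^2$ and Bernstein $L^\infty$ at low frequency, so that Hölder $L^6\cdot L^3\subset L^2$ closes; the derivative $\lra{\nabla_x}^s$ is harmless at low frequency. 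Estimate \eqref{equ:low,high,g1,g2,Gamma} is symmetric to \eqref{equ:high,low,g1,g2,Gamma}.

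\textbf{High--high interaction (main obstacle).} Here $s>1$ is genuinely needed and energy alone does not suffice, since $H^s\cdot H^s$ with weights in $L_t^2$ requires one factor in $L_t^\infty$ with weight $\lra{v}^{\frac12}$. We therefore insert the Duhamel formula
\begin{align*}
P^x_{>1}g_i=S_\nu(t)P^x_{>1}g_i(0)+\int_0^tS_\nu(t-t_i)P^x_{>1}\bigl[Kg_i+\Gamma(g_i,g_i)\bigr](t_i)\,dt_i
\end{align*}
into both slots. This produces the four configurations of the dispersive-type bilinear estimates \eqref{equ:Gamma,f,g,s>1}--\eqref{equ:Gamma,f,t1,g,t2,s>1}, which bound the result by products of $\n{g_i(0)}_{L_v^2H_x^s}$ and $\n{\lra{v}^{-\frac12}[Kg_i+\Gamma(g_i,g_i)]}_{L_t^2L_v^2H_x^s}$. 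The nonlinear contribution is exactly $\mathcal N(g_i)$. The linear piece $Kg_i$ is bounded, since $K$ is smoothing with Gaussian decay, by $\n{\lra{v}^{-\frac12}Kg_i}_{L_v^2}\lesssim\n{\lra{v}^{-N}g_i}_{L_v^2}$. At high frequency this is controlled in $L_t^2$ by $\n{\lra{v}^{\frac12}P^x_{>1}g_i}_{L_t^2L_v^2H_x^s}\lesssim\mathcal E(g_i)$ via \eqref{equ:dissipation estimates,high frequency}, since $P^x_{>1}$ commutes with $K$.

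The points we expect to need care are the following. First, the projection $P^x_{>1}$ commutes with $S_\nu$, so it passes inside the Duhamel integrals. Second, we must verify the weight bookkeeping in the microscopic/low-frequency pieces above, where the Gaussian in $\Gamma$ has to be used rather than the crude weight bound.
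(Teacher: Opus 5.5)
\textbf{Structure.} Your overall architecture is the paper's:
\begin{itemize}
\item the same four-way Littlewood--Paley split;
\item the high factor in $L^2_t$ via \eqref{equ:dissipation estimates,high frequency}, and the low factor in $L^\infty_t L^\infty_x$ via Bernstein;
\item the $\mathbf{P}_0/\mathbf{P}_1$ split of the low factor when it must carry the weight;
\item the $L^2_tL^6_x\times L^\infty_tL^3_x$ H\"older step for the low--low macroscopic piece;
\item for high--high, the Duhamel expansion with $Kg_i$ as a source, followed by the dispersive-type bilinear estimates.
\end{itemize}

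\textbf{The step that fails: weight bookkeeping.} The inequality $\mu(v)\lesssim \mu^{1/2}(v^*)\mu^{1/2}(u^*)$ is false. By energy conservation the right side equals $e^{-|v|^2-|u|^2}$, which is far smaller than $e^{-2|v|^2}$ when $|u|\gg |v|$.

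In fact the Gaussian actually present in $\Gamma^{\pm}$ is $\mu^{1/2}(u)$, because $\mu^{1/2}(v^*)\mu^{1/2}(u^*)=\mu^{1/2}(v)\mu^{1/2}(u)$. That Gaussian is already spent in the Cauchy--Schwarz step that produces your reduced form $\| |u-v|^{1/2}\langle\nabla_x\rangle^s[F(v)G(u)]\|$, so nothing is left to exploit afterwards.

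The fix is elementary: replace your multiplicative bound by $|u-v|^{1/2}\le \langle v\rangle^{1/2}+\langle u\rangle^{1/2}$. Together with the Leibniz rule at fixed $(u,v)$, this produces exactly the paper's four terms $I_1$--$I_4$. These are the standard estimate $\|\langle v\rangle^{-1/2}\Gamma(g_1,g_2)\|_{L^2_v}\lesssim \|\langle v\rangle^{1/2}g_1\|_{L^2_v}\|g_2\|_{L^2_v}+\|g_1\|_{L^2_v}\|\langle v\rangle^{1/2}g_2\|_{L^2_v}$.

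This bound is a sum, so you cannot choose which argument is weight-free. For $\Gamma^-$ the weight can never be moved off the first argument at all. Both configurations must therefore be estimated, and your splitting handles them:
\begin{itemize}
\item weight on the high factor: that factor takes $L^2_t$, and the unweighted low factor sits in $L^\infty_tL^\infty_x$;
\item weight on the low factor: either it is absorbed by $\mathbf{P}_0$ in $L^\infty_t$, or the weighted $\mathbf{P}_1$ piece takes $L^2_t$ while the unweighted high factor sits in $L^\infty_tL^2_vH^s_x$.
\end{itemize}

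\textbf{Two smaller points.}

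First, the claim $\|\langle v\rangle^{-1/2}Kg\|_{L^2_v}\lesssim\|\langle v\rangle^{-N}g\|_{L^2_v}$ is false for large $N$. The hard-sphere kernel decays like a Gaussian in $|u-v|$, not in $v$, and its gain part essentially preserves $|v|$. To see this, take $g$ to be the indicator of the shell $R\le |u|\le R+1$ and let $R\to\infty$. All you actually need is $\|\langle v\rangle^{-1/2}Kh\|_{L^2_v}\lesssim\|h\|_{L^2_v}$, which holds and is what the paper uses, so your high--high step survives.

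Second, what forces the dispersive estimates at high--high is not the weight; with the additive bound, the weighted factor can always take $L^2_t$. The real obstruction is that for $1<s\le 3/2$ the Leibniz--H\"older step would need a high-frequency $H^s_x$ factor in $L^\infty_x$.
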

\begin{proof}
First, we give a preliminary estimate. By the fractional Leibniz rule, we have
\begin{align}
&\bn{\lra{v}^{-\frac{1}{2}}\Gamma(G_{1},G_{2})}_{L_{t}^{2}L_{v}^{2}H_{x}^{s}}\\
\lesssim& \bn{\lra{v}^{-\frac{1}{2}}\Gamma\lrs{\n{\lra{\nabla_{x}}^{s}G_{1}}_{L_{x}^{p_{1}}},
\n{G_{2}}_{L_{x}^{p_{2}}}}}_{L_{t}^{2}L_{v}^{2}}+
\bn{\lra{v}^{-\frac{1}{2}}\Gamma\lrs{\n{G_{1}}_{L_{x}^{q_{1}}},
\n{\lra{\nabla_{x}}^{s}G_{2}}_{L_{x}^{q_{2}}}}}_{L_{t}^{2}L_{v}^{2}}.\notag
\end{align}
Then applying the collision estimate that
\begin{align*}
\n{\lra{v}^{-\frac{1}{2}}\Gamma(g_{1},g_{2})}_{L_{v}^{2}}
\lesssim \n{\lra{v}^{\frac{1}{2}}g_{1}}_{L_{v}^{2}}\n{g_{2}}_{L_{v}^{2}}+
\n{g_{1}}_{L_{v}^{2}}\n{\lra{v}^{\frac{1}{2}}g_{2}}_{L_{v}^{2}},
\end{align*}
we get
\begin{align}\label{equ:Gamma,G1G2,I1234}
\bn{\lra{v}^{-\frac{1}{2}}\Gamma(G_{1},G_{2})}_{L_{t}^{2}L_{v}^{2}H_{x}^{s}}\lesssim& I_{1}(G_{1},G_{2})+I_{2}(G_{1},G_{2})+I_{3}(G_{1},G_{2})+I_{4}(G_{1},G_{2}),
\end{align}
where the terms $I_{j}(G_{1},G_{2})$ are given by
\begin{align*}
I_{1}(G_{1},G_{2})=:&\bn{\lra{v}^{\frac{1}{2}}\lra{\nabla_{x}}^{s}G_{1}}_{L_{t}^{r_{1}}L_{v}^{2}L_{x}^{p_{1}}}
\bn{G_{2}}_{L_{t}^{r_{1}^{*}}L_{v}^{2}L_{x}^{p_{2}}}\\
I_{2}(G_{1},G_{2})=:&\bn{\lra{\nabla_{x}}^{s}G_{1}}_{L_{t}^{r_{2}}L_{v}^{2}L_{x}^{p_{1}}}
\bn{\lra{v}^{\frac{1}{2}}G_{2}}_{L_{t}^{r_{2}^{*}}L_{v}^{2}L_{x}^{p_{2}}}\\
I_{3}(G_{1},G_{2})=:&\bn{\lra{v}^{\frac{1}{2}}G_{1}}_{L_{t}^{r_{3}}L_{v}^{2}L_{x}^{q_{1}}}
\bn{\lra{\nabla_{x}}^{s}G_{2}}_{L_{t}^{r_{3}^{*}}L_{v}^{2}L_{x}^{q_{2}}}\\
I_{4}(G_{1},G_{2})=:&\bn{G_{1}}_{L_{t}^{r_{4}}L_{v}^{2}L_{x}^{q_{1}}}
\bn{\lra{v}^{\frac{1}{2}}\lra{\nabla_{x}}^{s}G_{2}}_{L_{t}^{r_{4}^{*}}L_{v}^{2}L_{x}^{q_{2}}}
\end{align*}
with the exponents satisfying
\begin{align*}
\frac{1}{2}=\frac{1}{r_{i}}+\frac{1}{r_{i}^{*}},\quad \frac{1}{2}=\frac{1}{p_{1}}+\frac{1}{p_{2}},\quad \frac{1}{2}=\frac{1}{q_{1}}+\frac{1}{q_{2}}.
\end{align*}

\vspace{1em}
\textbf{High-low frequency estimates \eqref{equ:high,low,g1,g2,Gamma}--\eqref{equ:low,high,g1,g2,Gamma}.}
We only need to prove \eqref{equ:high,low,g1,g2,Gamma}, as the estimate \eqref{equ:low,high,g1,g2,Gamma} can be obtained in the same way by symmetry.

For $I_{1}$, we take $r_{1}=2$, $r_{1}^{*}=\infty$, $p_{1}=2$, $p_{2}=\infty$. Using Bernstein inequality and the high-frequency energy estimate \eqref{equ:dissipation estimates,high frequency}, we obtain
\begin{align}\label{equ:I1,high-low,g1g2}
I_{1}(P_{>1}^{x}g_{1},P_{\leq 1}^{x}g_{2})\lesssim &
\n{\lra{v}^{\frac{1}{2}}\lra{\nabla_{x}}^{s}P_{>1}^{x}g_{1}}_{L_{t}^{2}L_{v}^{2}L_{x}^{2}}
\n{P_{\leq 1}^{x}g_{2}}_{L_{t}^{\infty}L_{v}^{2}L_{x}^{\infty}}\\
\lesssim& \n{\lra{v}^{\frac{1}{2}}\lra{\nabla_{x}}^{s}P_{>1}^{x}g_{1}}_{L_{t}^{2}L_{v}^{2}L_{x}^{2}}
\n{P_{\leq 1}^{x}g_{2}}_{L_{t}^{\infty}L_{v}^{2}L_{x}^{2}}\lesssim \mathcal{E}(g_{1})\mathcal{E}(g_{2}).\notag
\end{align}

For $I_{2}$, we apply the macroscopic-microscopic decomposition and use Bernstein inequality to get
\begin{align}\label{equ:I2,high-low,g1g2}
I_{2}(P_{>1}^{x}g_{1},P_{\leq 1}^{x}g_{2})
\lesssim & I_{2}(P_{>1}^{x}g_{1},\mathbf{P}_{0}P_{\leq 1}^{x}g_{2})+I_{2}(P_{>1}^{x}g_{1},\mathbf{P}_{1}P_{\leq 1}^{x}g_{2})\\
\lesssim& \n{\lra{\nabla_{x}}^{s}P_{>1}^{x}g_{1}}_{L_{t}^{2}L_{v}^{2}L_{x}^{2}}
\n{\lra{v}^{\frac{1}{2}}\mathbf{P}_{0}P_{\leq 1}^{x}g_{2}}_{L_{t}^{\infty}L_{v}^{2}L_{x}^{\infty}}\notag
\\
&+\n{\lra{\nabla_{x}}^{s}P_{>1}^{x}g_{1}}_{L_{t}^{\infty}L_{v}^{2}L_{x}^{2}}
\n{\lra{v}^{\frac{1}{2}}\mathbf{P}_{1}P_{\leq 1}^{x}g_{2}}_{L_{t}^{2}L_{v}^{2}L_{x}^{\infty}}\notag\\
\lesssim& \mathcal{E}(g_{1})\mathcal{E}(g_{2}).\notag
\end{align}

For $I_{3}$, repeating the argument used for $I_{1}$ in \eqref{equ:I1,high-low,g1g2}, we get
\begin{align}\label{equ:I3,high-low,g1g2}
I_{3}(P_{>1}^{x}g_{1},P_{\leq 1}^{x}g_{2})\lesssim &
\n{\lra{v}^{\frac{1}{2}}P_{>1}^{x}g_{1}}_{L_{t}^{2}L_{v}^{2}L_{x}^{2}}
\n{\lra{\nabla_{x}}^{s}P_{\leq 1}^{x}g_{2}}_{L_{t}^{\infty}L_{v}^{2}L_{x}^{\infty}}\\
\lesssim& \n{\lra{v}^{\frac{1}{2}}P_{>1}^{x}g_{1}}_{L_{t}^{2}L_{v}^{2}L_{x}^{2}}
\n{\lra{\nabla_{x}}^{s}P_{\leq 1}^{x}g_{2}}_{L_{t}^{\infty}L_{v}^{2}L_{x}^{2}}\lesssim \mathcal{E}(g_{1})\mathcal{E}(g_{2}).\notag
\end{align}

For $I_{4}$, proceeding analogously to the estimate \eqref{equ:I2,high-low,g1g2} for $I_{2}$, we derive
\begin{align}\label{equ:I4,high-low,g1g2}
I_{4}(P_{>1}^{x}g_{1},P_{\leq 1}^{x}g_{2})
\lesssim & I_{4}(P_{>1}^{x}g_{1},\mathbf{P}_{0}P_{\leq 1}^{x}g_{2})+I_{4}(P_{>1}^{x}g_{1},\mathbf{P}_{1}P_{\leq 1}^{x}g_{2})\\
\lesssim& \n{P_{>1}^{x}g_{1}}_{L_{t}^{2}L_{v}^{2}L_{x}^{2}}
\n{\lra{v}^{\frac{1}{2}}\lra{\nabla_{x}}^{s}\mathbf{P}_{0}P_{\leq 1}^{x}g_{2}}_{L_{t}^{\infty}L_{v}^{2}L_{x}^{\infty}}\notag
\\
&+\n{P_{>1}^{x}g_{1}}_{L_{t}^{\infty}L_{v}^{2}L_{x}^{2}}
\n{\lra{v}^{\frac{1}{2}}\lra{\nabla_{x}}^{s}\mathbf{P}_{1}P_{\leq 1}^{x}g_{2}}_{L_{t}^{2}L_{v}^{2}L_{x}^{\infty}}\notag\\
\lesssim& \mathcal{E}(g_{1})\mathcal{E}(g_{2}).\notag
\end{align}

Putting together \eqref{equ:I1,high-low,g1g2}--\eqref{equ:I4,high-low,g1g2} for $I_{1}$--$I_{4}$, we complete the proof of the high-low frequency estimate \eqref{equ:high,low,g1,g2,Gamma}.

\vspace{1em}
\textbf{Low-low frequency estimate \eqref{equ:low,low,g1,g2,Gamma}.}

By macroscopic-microscopic decomposition, we have
\begin{align*}
\Gamma(P_{\leq 1}^{x}g_{1},P_{\leq 1}^{x}g_{2})
=\sum_{i=0,1}\sum_{j=0,1}
\Gamma(\mathbf{P}_{i}P_{\leq 1}^{x}g_{1},\mathbf{P}_{j}P_{\leq 1}^{x}g_{2}).
\end{align*}
We notice that the microscopic part admits dissipation energy estimates
\begin{align}
\n{\mathbf{P}_{1}g}_{L_{t}^{\infty}L_{v}^{2}H_{x}^{s}}+
\bn{\lra{v}^{\frac{1}{2}}\mathbf{P}_{1}g}_{L_{t}^{2}L_{v}^{2}H_{x}^{s}}\lesssim \mathcal{E}(g),
\end{align}
which are stronger than those available for the high-frequency term $P_{\geq 1}^{x}g$.
Consequently, the three terms involving $\mathbf{P}_{1}$, namely
\begin{align}
\Gamma(\mathbf{P}_{0}P_{\leq 1}^{x}g_{1},\mathbf{P}_{1}P_{\leq 1}^{x}g_{2}),\quad
\Gamma(\mathbf{P}_{1}P_{\leq 1}^{x}g_{1},\mathbf{P}_{0}P_{\leq 1}^{x}g_{2}),\quad
\Gamma(\mathbf{P}_{1}P_{\leq 1}^{x}g_{1},\mathbf{P}_{1}P_{\leq 1}^{x}g_{2}),
\end{align}
can be handled in the same way as in the high-low frequency estimate \eqref{equ:high,low,g1,g2,Gamma}.

It remains to treat the low-frequency macroscopic term $\Gamma(\mathbf{P}_{0}P_{\leq 1}^{x}g_{1},\mathbf{P}_{0}P_{\leq 1}^{x}g_{2})$.
Following the argument used for estimate \eqref{equ:Gamma,G1G2,I1234} with $p_{1}=q_{1}=6$ and $p_{2}=q_{2}=3$, we obtain
\begin{align*}
&\bn{\lra{v}^{-\frac{1}{2}}\Gamma(\mathbf{P}_{0}P_{\leq 1}^{x}g_{1},\mathbf{P}_{0}P_{\leq 1}^{x}g_{2})}_{L_{t}^{2}L_{v}^{2}H_{x}^{s}}\\
\lesssim& \n{\lra{v}^{\frac{1}{2}}\lra{\nabla_{x}}^{s}\mathbf{P}_{0}P_{\leq 1}^{x}g_{1}}_{L_{t}^{2}L_{v}^{2}L_{x}^{6}}
\n{\lra{v}^{\frac{1}{2}}\lra{\nabla_{x}}^{s}\mathbf{P}_{0}P_{\leq 1}^{x}g_{2}}_{L_{t}^{\infty}L_{v}^{2}L_{x}^{3}}
\end{align*}
Then, by Bernstein inequality, the projection estimate, and the $L_{x}^{6}$ estimate \eqref{equ:dissipation estimates,low frequency,L6}, we arrive at
\begin{align*}
&\bn{\lra{v}^{-\frac{1}{2}}\Gamma(\mathbf{P}_{0}P_{\leq 1}^{x}g_{1},\mathbf{P}_{0}P_{\leq 1}^{x}g_{2})}_{L_{t}^{2}L_{v}^{2}H_{x}^{s}}\\
\lesssim& \n{\lra{\nabla_{x}}^{s}\mathbf{P}_{0}P_{\leq 1}^{x}g_{1}}_{L_{t}^{2}L_{v}^{2}L_{x}^{6}}
\n{\lra{\nabla_{x}}^{s}\mathbf{P}_{0}P_{\leq 1}^{x}g_{1}}_{L_{t}^{\infty}L_{v}^{2}L_{x}^{2}}\lesssim \mathcal{E}(g_{1})\mathcal{E}(g_{2}),
\end{align*}
which completes the proof of \eqref{equ:low,low,g1,g2,Gamma}.

\vspace{1em}
\textbf{High-high frequency estimate \eqref{equ:high,high,g1,g2,Gamma}.}

We recall that
\begin{align*}
g_{i}(t)=S_{\nu}(t)g_{i}(0)+\int_{0}^{t}S_{\nu}(t-s)Kg_{i}(s)ds+
\int_{0}^{t}S_{\nu}(t-s)\Gamma(g_{i},g_{i})(s)ds,\quad i=1,2.
\end{align*}
Using dispersive-type bilinear estimates \eqref{equ:Gamma,f,g,s>1}--\eqref{equ:Gamma,f,t1,g,t2,s>1}, we get
\begin{align*}
&\bn{\lra{v}^{-\frac{1}{2}}\Gamma(P_{>1}^{x}g_{1},P_{>1}^{x}g_{2})}_{L_{t}^{2}L_{v}^{2}H_{x}^{s}}\\
\lesssim& \prod_{i=1,2}\lrs{\n{g_{i}(0)}_{L_{v}^{2}H_{x}^{s}}+\bn{\lra{v}^{-\frac{1}{2}}
KP_{>1}^{x}g_{i}(t_{1})}_{L_{t_{i}}^{2}L_{v}^{2}H_{x}^{s}}+
\n{\lra{v}^{-\frac{1}{2}}\Gamma(g_{i},g_{i})}_{L_{t_{i}}^{2}L_{v}^{2}H_{x}^{s}}}.
\end{align*}
By the collision estimate that $\n{Kf}_{L_{v}^{2}}\lesssim \n{f}_{L_{v}^{2}}$ and the dissipation energy estimate \eqref{equ:dissipation estimates,high frequency} for the high frequency, we deduce
\begin{align*}
&\bn{\lra{v}^{-\frac{1}{2}}\Gamma(P_{>1}^{x}g_{1},P_{>1}^{x}g_{2})}_{L_{t}^{2}L_{v}^{2}H_{x}^{s}}\\
\lesssim& \prod_{i=1,2}\lrs{\n{g_{i}(0)}_{L_{v}^{2}H_{x}^{s}}+\bn{\lra{v}^{-\frac{1}{2}}
P_{>1}^{x}g_{i}(t_{i})}_{L_{t_{i}}^{2}L_{v}^{2}H_{x}^{s}}+
\n{\lra{v}^{-\frac{1}{2}}\Gamma(g_{i},g_{i})}_{L_{t_{i}}^{2}L_{v}^{2}H_{x}^{s}}}\\
\lesssim& \prod_{i=1,2}\lrs{\n{g_{i}(0)}_{L_{v}^{2}H_{x}^{s}}+
\mathcal{E}(g_{i})+\mathcal{N}(g_{i})}.
\end{align*}
Therefore, we complete the proof of \eqref{equ:high,high,g1,g2,Gamma}.

Finally, for \eqref{equ:g1,g2,Gamma,bilinear,estimate},
putting together estimates \eqref{equ:high,low,g1,g2,Gamma}--\eqref{equ:high,high,g1,g2,Gamma}, we use the dissipation energy estimate \eqref{equ:dissipation energy estimate,g} to derive
\begin{align*}
\n{\lra{v}^{-\frac{1}{2}}\Gamma(g_{1},g_{2})}_{L_{t}^{2}L_{v}^{2}H_{x}^{s}}\lesssim&
\prod_{i=1,2}\lrs{\n{g_{i}(0)}_{L_{v}^{2}H_{x}^{s}}+
\mathcal{E}(g_{i})+\mathcal{N}(g_{i})}\\
\lesssim&\prod_{i=1,2}\lrs{\n{g_{i}(0)}_{L_{v}^{2}H_{x}^{s}}+\mathcal{N}(g_{i})}.
\end{align*}
Hence, we complete the proof of Lemma \ref{lemma:dispersive,bilinear estimates}.
\end{proof}

\begin{proof}[\textbf{Proof of Theorem \ref{theorem:global stability}}]
Let $g$ satisfy the Boltzmann equation \eqref{equ:perturbation,Boltzmann,g}.
We introduce the $X$-norm
\begin{align*}
\n{g}_{X}:=\mathcal{E}(g)+\n{\lra{v}^{-\frac{1}{2}}(\pa_{t}+v\cdot \nabla_{x}+L)g}_{L_{t}^{2}L_{v}^{2}H_{x}^{s}}=
\mathcal{E}(g)+\mathcal{N}(g),
\end{align*}
and define the nonlinear map
\begin{align}\label{equ:Phig,SL,duhamel}
\Phi(g):=S_{L}(t)g(0)+\int_{0}^{t}S_{L}(t-s)\Gamma(g,g)(s)ds,\quad S_{L}(t)=e^{-t(v\cdot \nabla_{x}+L)},
\end{align}
which implies that
$(\pa_{t}+v\cdot \nabla_{x}+L)\Phi(g)=\Gamma(g,g)$.

Using dissipation energy estimate \eqref{equ:dissipation energy estimate,g} that
\begin{align*}
\mathcal{E}(g)\lesssim \n{g(0)}_{L_{v}^{2}H_{x}^{s}}+\mathcal{N}(g),
\end{align*}
and bilinear estimates \eqref{equ:g1,g2,Gamma,bilinear,estimate} for the nonlinear terms, we obtain
\begin{align}
\n{\Phi(g)}_{X}=&\mathcal{E}(\Phi(g))+
\n{\lra{v}^{-\frac{1}{2}}(\pa_{t}+v\cdot \nabla_{x}+L)\Phi(g)}_{L_{t}^{2}L_{v}^{2}H_{x}^{s}}\\
\lesssim& \n{g(0)}_{L_{v}^{2}H_{x}^{s}}+\n{\lra{v}^{-\frac{1}{2}}\Gamma(g,g)}_{L_{t}^{2}L_{v}^{2}H_{x}^{s}}\notag\\
\lesssim&\n{g(0)}_{L_{v}^{2}H_{x}^{s}}+\lrs{\n{g(0)}_{L_{v}^{2}H_{x}^{s}}+\n{g}_{X}}^{2}.\notag
\end{align}
and
\begin{align}
\n{\Phi(g_{1})-\Phi(g_{2})}_{X}
=&\mathcal{E}(\Phi(g_{1})-\Phi(g_{2}))+
\n{\lra{v}^{-\frac{1}{2}}(\pa_{t}+v\cdot \nabla_{x}+L)(\Phi(g_{1})-\Phi(g_{2}))}_{L_{t}^{2}L_{v}^{2}H_{x}^{s}}\notag\\
\lesssim& \bn{\lra{v}^{-\frac{1}{2}}\lrc{\Gamma(g_{1},g_{1})-\Gamma(g_{2},g_{2})}}_{L_{t}^{2}L_{v}^{2}H_{x}^{s}}\notag\\
\lesssim& \n{g_{1}-g_{2}}_{X}\lrs{\n{g(0)}_{L_{v}^{2}H_{x}^{s}}+\n{g_{1}}_{X}+\n{g_{2}}_{X}},\notag
\end{align}
These two estimates imply that $\Phi$ is a contraction map, provided that
$\n{g(0)}_{L_{v}^{2}H_{x}^{s}}$ is sufficiently small.
Hence, the global well-posedness follows from the contraction mapping principle.

Furthermore, in view of the relation $f=\mu^{\frac{1}{2}}g$, we conclude the global well-posedness for the perturbation equation \eqref{equ:perturbation,Boltzmann,f} for $f$. Consequently, the global stability estimate \eqref{equ:global,f,f0} follows, and the solution map is Lipschitz continuous.
\end{proof}

\noindent \textbf{Acknowledgements}
The authors would like to thank Professors Yan Guo, Justin Holmer, and Haitao Wang for helpful discussions.
S. Shen was supported in part by NSF of China under Grant 12501322.
Z. Zhang was supported in part by NSF of China under Grant 12288101.
\vspace{0.5em}

\noindent\textbf{Disclosure of AI assistance.}
This work continues the research line of \cite{CDP19,CH24well,CSZ23sharp,CSZ24well,CGSZ26} and was written by the authors.
ChatGPT 5.6 (OpenAI) was used to assist with the literature search, in which some nice references such as \cite{Duan08,Duan11} previously unfamiliar to the authors were found.
The authors take full responsibility for the accuracy, originality, and entire content of the manuscript.

\vspace{0.5em}
\noindent\textbf{Data Availability Statement}
Data sharing is not applicable to this article as no datasets were generated or analysed during the current study.

\vspace{0.5em}
\noindent\textbf{Conflict of Interest}
The authors declare that they have no conflict of interest.

\bibliographystyle{abbrv}
\bibliography{references}

\end{document}